\documentclass[a4paper, 10pt]{amsart}
\textwidth16cm \textheight21cm \oddsidemargin-0.1cm
\evensidemargin-0.1cm
\usepackage{amsmath,amsthm}
\usepackage{amssymb, color, amscd, hyperref}
\usepackage[mathscr]{eucal}

\def\doi#1{{\small\href{https://doi.org/#1}{\path{doi:#1}}}}
\def\arxiv#1{{\small\href{http://www.arxiv.org/abs/#1}{\path{arXiv:#1}}}}
\def\url#1{{\small\href{#1}{\path{#1}}}}

\theoremstyle{plain}
\newtheorem{theorem}{\bf Theorem}[section]
\newtheorem{proposition}[theorem]{\bf Proposition}
\newtheorem{lemma}[theorem]{\bf Lemma}
\newtheorem{corollary}[theorem]{\bf Corollary}

\theoremstyle{definition}

\newtheorem{definition}[theorem]{\bf Definition}
\newtheorem{remark}[theorem]{\bf Remark}
\newtheorem{problem}[theorem]{\bf Problem}

\newcommand{\N}{\mathbb N}
\newcommand{\Z}{\mathbb Z}

\newcommand{\Q}{\mathbb Q}
\newcommand{\F}{\mathbb F}

 \DeclareMathOperator{\ord}{ord}

\DeclareMathOperator{\spec}{spec} \DeclareMathOperator{\supp}{supp}
 
\DeclareMathOperator{\Ker}{Ker} 
 \DeclareMathOperator{\End}{End}
\DeclareMathOperator{\Aut}{Aut} \DeclareMathOperator{\Gal}{Gal}

\newcommand{\DP}{\negthinspace : \negthinspace}
\newcommand{\red}{{\text{\rm red}}}
\renewcommand{\t}{\, | \,}

\newcommand{\mon}{\text{\rm mon}}

\newcommand{\id}{\text{\rm id}}
\newcommand{\LK}{\,[\![}
\newcommand{\RK}{]\!]}

\newcommand{\BF}{\text{\rm BF}}
\newcommand{\FF}{\text{\rm FF}}
\newcommand{\C}{\text{\rm C}}

\numberwithin{equation}{section}


\newcommand{\mf}{\mathfrak}
\newcommand{\rest}{\negthinspace \restriction\negthinspace}
\newcommand{\bs}{\boldsymbol}
\newcommand{\ms}{\mathscr}
\newcommand{\mc}{\mathcal}

\newcommand{\wh}{\widehat}
\newcommand{\mmod}{\negthickspace \mod}
\renewcommand{\P}{\mathbb P}

\subjclass[2010]{20M13; 11B50, 11B75, 11E12, 11R27}
\thanks{This work was supported by the Austrian Science Fund FWF (Project P33499-N) and by the National Natural Science Foundation of China (Grant No. 12001331)}

\begin{document}

\title[Weighted zero-sum sequences, norm monoids, and binary quadratic forms]{On monoids of  weighted zero-sum sequences and \\ applications to norm monoids in Galois number fields and binary quadratic forms}

\author{Alfred Geroldinger and Franz Halter-Koch and Qinghai Zhong}

\address{University of Graz, NAWI Graz \\
Institute for Mathematics and Scientific Computing \\
Heinrichstra{\ss}e 36\\
8010 Graz, Austria (A.~Geroldinger, F.~Halter-Koch, and Q.~Zhong)}

\address{School of Mathematics and Statistics, Shandong University of Technology, Zibo, Shandong 255000, China (Q.~Zhong)}

\email{alfred.geroldinger@uni-graz.at, franz.halterkoch@gmx.at,  qinghai.zhong@uni-graz.at}
\urladdr{https://imsc.uni-graz.at/geroldinger, https://imsc.uni-graz.at/zhong/}

\keywords{weighted zero-sum sequences, norm monoids, binary quadratic forms, sets of lengths, catenary degrees}

\begin{abstract}
Let $G$ be an additive finite abelian group and $\Gamma \subset \End (G)$ be a subset of the endomorphism group of $G$. A sequence $S = g_1 \cdot \ldots \cdot g_{\ell}$ over $G$ is a ($\Gamma$-)weighted zero-sum sequence if there are $\gamma_1, \ldots, \gamma_{\ell} \in \Gamma$ such that $\gamma_1 (g_1) + \ldots + \gamma_{\ell} (g_{\ell})=0$.  We construct transfer homomorphisms from norm monoids (of  Galois algebraic number fields with Galois group $\Gamma$) and from monoids of positive integers, represented by binary quadratic forms, to monoids of weighted zero-sum sequences. Then we study  algebraic and arithmetic properties of  monoids of weighted zero-sum sequences.
\end{abstract}

\maketitle


\medskip
\section{Introduction} \label{1}
\medskip

Weighted zero-sum problems over finite abelian groups were introduced by Adhikari et al. (see  \cite{A-C-F-K-P06, Ad-Ch08a, Ad-Ra06} for some first papers), and the idea to use homomorphisms as weights is due to Yuan and Zeng \cite{Ze-Yu11b}.  The focus of early work was on combinatorial aspects of the theory, on weighted analogs of the Davenport constant, the Erd{\H{o}}s-Ginzburg-Ziv constant, the Harborth constant, and others. For example, Gao's Theorem (stating that $\mathsf E (G) = |G| + \mathsf d (G)$) found a weighted analog, based on the DeVos-Goddyn-Mohar Theorem (see Chapter 16 in the monograph \cite{Gr13a}; for a sample of further papers with a combinatorial flavor, see \cite{Yu-Ze10b, Ad-Gr-Su12a, Gr-Ma-Or12a, Ma-Or-Ra-Sc13a, Go-Le-Ma13a, HK14b, Gr-He15a, Gr-Ph-Po13, Ol-Le-Go19a, Ad-He21a, A-H-M-S22, Br-Ri22a}).    Interactions of weighted zero-sum problems and coding theory are discussed in \cite{ Ma-Or-Sa-Sc15}.

The arithmetic of a Krull domain or, more generally, of a Krull monoid with class group $G$ is closely connected with the arithmetic of an associated monoid of zero-sum sequences over $G$, which again is a Krull monoid. Pushed forward by this connection, the arithmetic of monoids of zero-sum sequences received wide attention (see the monographs \cite{Ge-HK06a, Ge-Ru09} and the  survey \cite{Sc16a}). Arithmetic investigations of monoids of weighted zero-sum sequences over a finite abelian group were initiated by Schmid et al. \cite{B-M-O-S22}.

The present paper has three main objectives. In Section \ref{3}, we establish transfer homomorphisms from norm monoids (of Galois-invariant orders in algebraic number fields) and from monoids of positive integers, that can be represented by binary quadratic forms, to monoids of weighted zero-sum sequences (Theorems \ref{3.2}, \ref{3.5}, and Corollary \ref{3.6}). Roughly speaking, this means that the arithmetic of these monoids coincides with the arithmetic of the associated monoids of weighted zero-sum sequences (Corollary \ref{6.5}).

In Sections \ref{4} and \ref{5}, we study algebraic properties of monoids $\mathcal B_{\Gamma} (G)$ of weighted zero-sum sequences.
(Weakly) Krull monoids and (weakly) Krull domains as well as C-monoids and C-domains are well-studied objects in Multiplicative Ideal Theory and Factorization Theory. In Theorem \ref{4.4}, we show that $\mathcal B_{\Gamma} (G)$ is weakly Krull (resp. Krull or transfer Krull) only in exceptional cases. In Theorem \ref{5.1}\, we recall the known fact that $\mathcal B_{\Gamma} (G)$  is a C-monoid and provide it with a simple proof. Then we characterize seminormal monoids of $\Gamma$-weighted zero-sum sequences (Theorem \ref{5.2}), and as a consequence we determine their class semigroups (Theorem \ref{5.5}).

In Section \ref{6}, we study the arithmetic  of the monoid $\mathcal B_{\pm} (G)$ of plus-minus weighted zero-sum sequences. In particular, we compare it with the arithmetic of the monoid $\mathcal B (G)$ of zero-sum sequences over $G$ and we find similarities and striking differences. We end with a list of problems (Problems \ref{6.7} and \ref{6.10}).

\medskip
\section{Background on monoids} \label{2}
\medskip

We denote by $\mathbb P \subset \N \subset \N_0 \subset \Z$ the set of prime numbers, positive integers, non-negative integers, and integers. For $a, b \in \Z$, let $[a, b] = \{ x \in \Z \colon a \le x \le b\}$ be the discrete interval between $a$ and $b$. For a subset $A  \subset \Z$,   the set of distances $\Delta (A) \subset \N$ is the set of all $d \in \N$ for which there is $a \in A$ such that $A \cap [a, a+d] = \{a, a+d\}$.
Let $G$ be an additive abelian group, and let $A, B \subset G$ be subsets. Then we set $A^{\bullet} = A \setminus \{0\}$. We denote by $\langle A \rangle$  the subgroup generated by $A$ and by $A + B = \{a+b \colon a \in A , b \in B \}$ their sumset.

Suppose that $G$ is finite and let $r \in \N$. An $r$-tuple $(e_1, \ldots, e_r) \in G^r$ is said to be \,{\it independent}\, (briefly, the elements $e_1, \ldots, e_r$ are independent) if $e_1, \ldots, e_r$ are nonzero and
\[
\sum_{i=1}^r m_i e_i = 0 , \quad \text{with} \quad m_1, \ldots, m_r \in \Z, \quad \text{implies that} \quad m_1e_1 = \ldots = m_re_r = 0 \,.
\]
The tuple $(e_1, \ldots, e_r)$ is called a \,{\it basis}\, of $G$ if $(e_1, \ldots, e_r)$ is independent and $G = \langle e_1, \ldots, e_r \rangle$. For every $n \in \N$, we denote by $C_n$ a cyclic group of order $n$.

\smallskip

Throughout this paper, a \,{\it monoid}\, $H$ means a multiplicative commutative cancellative semigroup with identity element $1_H = 1 \in H$. We denote by $H^{\times}$ the group of invertible elements, by $\mathsf q (H)$ the quotient group of $H$ and by $H_{\red} = \{ a H^{\times} \colon a \in H \}$ the associated reduced monoid. We call $H$ reduced if $H^\times = \{1\}$.

\smallskip

We denote by
\begin{itemize}
\item $H' = \{ x \in \mathsf q (H) \colon \text{there is some $N \in \N$ such that} \ x^n \in H \ \text{for all} \ n \ge N \}$ the \,{\it seminormalization}, by

\item $\widetilde H = \{ x \in \mathsf q (H) \colon \text{there is some $n \in \N$ such that} \,x^n \in H \}$ the \,{\it root closure}, and by

\item $\widehat H = \{ x \in \mathsf q (H) \colon \text{there is some $c \in H$ such that} \ cx^n \in H \ \text{for all} \ n \in \N \}$ the \,{\it complete integral closure}\, of $H$.
\end{itemize}
Then $H \subset H' \subset \widetilde H \subset \widehat H \subset \mathsf q (H)$, and $H$ is said to be {\it seminormal, root closed, or completely integrally closed} if $H = H'$, $H = \widetilde H$, or $H = \widehat H$.

Concerning the ideal theory of monoids, our main reference is \,\cite{HK98} (note however that our monoids do not contain a zero element, and thus $\emptyset$ is an $s$-ideal of $H$). In particular, we denote by $s$-$\spec (H)$ the set of all prime $s$-ideals of $H$ and by $\mathfrak X (H)$ the set of all minimal nonempty prime $s$-ideals.

For a set $P$, let $\mathcal F (P)$ be the free abelian monoid with basis $P$. Then every element $a \in \mathcal F (P)$ can be uniquely written in the form
\[
a = \prod_{p \in P} p^{\mathsf v_p (a)} \quad \text{with} \quad \mathsf v_p (a) \in \N_0 \ \text{and} \ \mathsf v_p (a) = 0 \ \text{for almost all} \ p \in P \,.
\]
We call
\[
|a|= \sum_{p \in P} \mathsf v_p (a) \quad \text{the \,{\it length} \ and } \ \supp (a) = \{ p \in P \colon \mathsf v_p (a) > 0 \} \subset P \quad \text{the \,{\it support}\, of } \ a.
\]
If $H$ is a submonoid of a free abelian monoid $F$, then  $H' \subset \widetilde H \subset \widehat H \subset \widehat F = F$.

\medskip

\noindent
{\bf Arithmetic of monoids.} Let $H$ be a monoid. We denote by $\mathcal A (H)$ the set of its \,{\it atoms}\, (i.~e., irreducible elements), by $\mathsf Z (H) = \mathcal F ( \mathcal A (H_{\red})$ the \,{\it factorization monoid}\, and by
\[
\pi \colon \mathsf Z (H) \to H_{\red} \,, \quad \text{defined by} \quad \pi (u) = u \quad \text{for all} \ u \in \mathcal A (H_{\red}) \,,
\]
the \,{\it factorization homomorphism}\, of $H$. For an element $a \in H$, let
\[
\begin{aligned}
\mathsf Z_H (a) & = \mathsf Z (a) = \pi^{-1} (a) \subset \mathsf Z (H) \quad \text{be the \,{\it set of factorizations}\, and} \\
\mathsf L_H (a) & = \mathsf L (a) = \{ |z| \colon z \in \mathsf Z (a) \} \subset \N_0 \quad \text{the \,{\it set of lengths}\, of $a$} \,.
\end{aligned}
\]
We denote by $\mathcal L (H) = \{ \mathsf L (a) \colon a \in H \}$ the \,{\it system of sets of lengths}\, of $H$. Then $H$ is called
\begin{itemize}
\item {\it atomic}\, if $\mathsf Z (a) \ne \emptyset$ for all $a \in H$,

\item an \,{\it \FF-monoid}\, if $\mathsf Z (a)$ is finite and nonempty for all $a \in H$,

\item a \,{\it \BF-monoid}\, if $\mathsf L (a)$ is finite and nonempty for all $a \in H$,

\item {\it half-factorial}\, if $|\mathsf L (a)|=1$ for all $a \in H$,

\item a \,{\it Mori monoid}\, if it satisfies the ascending chain condition on divisorial ideals.
\end{itemize}
Finitely generated monoids (e.g,  monoids of $\Gamma$-weighted zero-sum sequences, as discussed below) are Mori monoids and Mori monoids are BF-monoids.
Let $z, z' \in \mathsf Z (H)$, say
\[
z = u_1 \cdot \ldots \cdot u_{\ell}v_1 \cdot \ldots \cdot v_m \quad \text{and} \quad z' = u_1 \cdot \ldots \cdot u_{\ell} w_1 \cdot \ldots \cdot w_n \,,
\]
where $\ell, m, n \in \N_0$, all $u_i,v_j, w_k \in \mathcal A (H_{\red})$, and $\gcd (z, z') = u_1 \cdot \ldots \cdot u_{\ell}$. Then we call $\mathsf d (z,z') = \max \{m,n\}$ the \,{\it distance}\, between $z$ and $z'$. Let $a \in H$ and $N \in \N_0 \cup \{\infty\}$. A finite sequence $z_0, \ldots, z_k \in \mathsf Z (a)$ is a called a \,{\it $($monotone$) \ N$-chain of factorizations}\, if $\mathsf d (z_{i-1}, z_i) \le N$ for all $i \in [1,k]$ (and either $|z_0| \le \ldots \le |z_k|$ or $|z_0| \ge \ldots \ge |z_k|$). We denote by $\mathsf c (a) \in \N_0 \cup \{\infty\}$ (by $\mathsf c_{\mon} (a) \in \N_0 \cup \{\infty\}$) the smallest $N \in \N_0 \cup \{\infty\}$ such that any two factorizations $z, z' \in \mathsf Z (a)$ can be concatenated by a (monotone) $N$-chain of factorizations from $\mathsf Z (a)$ and then we call
\[
\mathsf c (H)  = \sup \{\mathsf c (b) \colon b \in H \} \in \N_0 \cup \{\infty\}
\]
the \,{\it catenary degree}\, and $\mathsf c_{\mon} (H) = \sup \{\mathsf c_{\mon} (b) \colon b \in H \} \in \N_0 \cup \{\infty\}$ the \,{\it monotone catenary degree}\, of $H$.

\medskip

\noindent
{\bf $\Gamma$-weighted zero-sum sequences.} Let $G$ be an additive finite abelian group, $G_0 \subset G$ be a subset, and $\Gamma \subset \End (G)$ be a subset of the endomorphism group of $G$. An element $S = g_1 \cdot \ldots \cdot g_{\ell} \in \mathcal F (G_0)$, where $\ell \in \N_0$ and $g_1, \ldots, g_{\ell} \in G_0$, is called  a \,{\it sequence}\, over $G_0$. We call $|S| = \ell$ its \,{\it length}, \,$\sigma (S) = g_1 + \ldots + g_{\ell} \in G$ its \,{\it sum}, and
\[
\sigma_\Gamma(S) = \{\gamma_1(g_1) + \ldots + \gamma_\ell(g_\ell) \colon \gamma_1, \ldots, \gamma_\ell \in \Gamma\} \ \text{ the \,{\it set of $\Gamma$-weighted sums}\, of} \ S \,.
\]
For a sequence $T \in \mc F(G_0)$, we write $T \t S$ if $S=TU$ for some $U \in \mc F(G_0)$. Moreover, we define
\[
\Sigma (S) = \{ \sigma (T) \colon 1 \ne T \in \mathcal F (G), \ T \t S \}\quad \text{and} \quad \Sigma_{\Gamma} (S) = \bigcup_{1 \ne T \t S } \sigma_{\Gamma} (T).
\]
For a sequence $S = g_1 \cdot \ldots \cdot g_\ell\in \mc F(G_0)$ and $\gamma \in \Gamma$, we set $\gamma S = \gamma(g_1) \cdot \ldots, \cdot \gamma(g_\ell)$.\newline
A sequence $S \in \mc F(G_0 )$ is called
\begin{itemize}
\item a\, {\it zero-sum sequence}\, if $\sigma (S) = 0$,

\item {\it zero-sum free}\, if $0 \notin \Sigma (S)$,

\item a\, {\it $\Gamma$-weighted zero-sum sequence}\, if $0 \in \sigma_{\Gamma} (S)$, and

\item {\it $\Gamma$-weighted zero-sum free}\, if $0 \notin \Sigma_{\Gamma} (S)$.
\end{itemize}

\smallskip

Let $\mathcal B (G_0)$ denote the \,{\it monoid of zero-sum sequences over \,$G_0$}, and let $\mathcal B_{\Gamma} (G_0)$ denote the \,{\it monoid of $\Gamma$-weighted zero-sum sequences over $G_0$}. By  definition,
\[
\mc B_\Gamma(G_0) = \Bigl\{ g_1 \cdot \ldots \cdot g_\ell \in \mc F(G_0) \colon \sum_{i=1}^{\ell} \gamma_i(g_i)=0 \ \text{ for some } \ \gamma_1, \ldots, \gamma_{\ell} \in \Gamma \Bigr\},
\]
and a sequence $S \in \mc F(G_0)$ belongs to $\mc B_\Gamma(G_0)$ if and only if there is a factorization
\[
S = \prod_{\gamma \in \Gamma} S_\gamma, \ \text{ where } \ S_\gamma \in \mc F(G_0) \ \text{ for all } \ \gamma \in \Gamma, \quad \text{and} \quad\sum_{\gamma \in \Gamma} \gamma \big(\sigma(S_\gamma) \big)=0 \,.\tag{$\dagger$}
\]
By definition, $\mc B_\Gamma(G_0) = \{1\}$ if either $G_0= \emptyset$ or $\Gamma = \emptyset$, \ $\mathcal B (G_0) = \mathcal B_{\{\id_G\}} (G_0) \subset \mc F(G_0)$ and, if \,$\id_G  \in \Gamma$, then $\mathcal B (G_0) \subset \mathcal B_{\Gamma} (G_0) \subset \mathcal F (G_0)$ are submonoids. Their sets of atoms,  $\mathcal A \big( \mathcal B (G_0) \big)$ and $\mathcal A \big( \mathcal B_{\Gamma} (G_0) \big)$,  are both finite, and therefore $\mathcal B (G_0)$ and $\mathcal B_{\Gamma} (G_0)$ are reduced finitely generated monoids. Since their quotient groups are torsion-free, they are affine monoids (in the sense of \cite{Br-Gu09a}), and  they are FF-monoids with finite catenary degrees by \cite[Proposition 1.5.5 and Theorem 3.1.4]{Ge-HK06a}. We denote by

\begin{itemize}
\item
$\mathsf D \big( \mathcal B (G_0) \big) = \max \bigl\{ |U| \,\colon U \in \mathcal A ( \mathcal B (G_0) )\bigr\}$  the \,{\it $($large$)$ Davenport constant}\, of $\mc B(G_0)$, by

\smallskip

\item
$\mathsf D  \big( \mathcal B_{\Gamma} (G_0) \big) = \max \bigl\{ |U| \colon  U \in \mathcal A ( \mathcal B_{\Gamma}\, (G_0) ) \bigr\}$  the \,{\it $($large$)$ Davenport constant} of $\mathcal B_{\Gamma} (G_0)$, by

\smallskip

\item
$\mathsf d \big( \mathcal B (G_0) \big) = \max \bigl\{ |S| \colon S \in \mathcal F (G_0) \ \text{is zero-sum free} \bigr\}$\, the  \,{\it $($small$)$ Davenport constant}\, of $\mathcal B (G_0)$, and by

\smallskip

\item
$\mathsf d \big( \mathcal B_{\Gamma} (G_0) \big) = \max \bigl\{ |S| \colon S \in \mathcal F (G_0) \ \text{is $\Gamma$-weighted zero-sum free} \bigr\}$\, the \,{\it $($small$)$ Davenport constant}\, of $\mathcal B_{\Gamma} (G_0)$.
\end{itemize}
These definitions are consistent with \cite{B-M-O-S22}, and the idea to define Davenport constants of \BF-monoids stems from \cite[Section 2.5]{Cz-Do-Ge16a}. By \cite[Theorem 3.3]{B-M-O-S22}, we have
\begin{equation} \label{inequality-0}
1 + \mathsf d \big( \mathcal B_{\Gamma} (G) \big) \le \mathsf D \big( \mathcal B_{\Gamma} (G) \big)  \le \mathsf D \big( \mathcal B (G) \big) \,,
\end{equation}
and both inequalities can be strict. By definition,  $1 + \mathsf d \big( \mathcal B_{\Gamma} (G_0) \big)$ is the smallest integer $\ell \in \N$ such that every sequence $S \in \mathcal F (G_0)$ of length $|S|\ge \ell$ has a nontrivial $\Gamma$-weighted zero-sum subsequence. In the special case, when $\Gamma = \{\id_G\}$,  we use the abbreviations
\[
\mathsf D (G_0) = \mathsf D \big( \mathcal B (G_0) \big), \quad \text{and} \quad  \mathsf d (G_0) = \mathsf d \big( \mathcal B (G_0) \big) \,,
\]
and these are the usual Davenport constants.
If $S \in \mathcal F (G)$ is zero-sum free, then $(- \sigma (S))S \in \mathcal A (G)$, and $1 + \mathsf d (G) = \mathsf D (G)$.

Suppose that $G \cong C_{n_1} \oplus \ldots \oplus C_{n_r}$, with $r \in \N_0$ and $1 < n_1 \t \ldots \t n_r$. Then
\begin{equation} \label{daven-1}
\mathsf D^* (G) = 1 + \sum_{i=1}^r (n_i-1) \le \mathsf D (G) \le |G| \,,
\end{equation}
and the left inequality is an equality for $p$-groups, for $r \le 2 $, and others (\cite[Chapter 5]{Ge-HK06a}).
In case $\Gamma = \{\id_G, -\id_G \}$ we set $\sigma_{\pm} (S) = \sigma_{\Gamma} (S)$, \ $\Sigma_{\pm} (S) = \Sigma_{\Gamma} (S)$, \ $\mathcal B_{\pm} (G_0) = \mathcal B_\Gamma (G_0)$, \ $\mathsf D_{\pm} (G_0) = \mathsf D \big( \mathcal B_{\pm} (G_0) \big)$, \ $\mathsf d_{\pm} (G_0) = \mathsf d \big( \mathcal B_{\pm} (G_0) \big) $, and we speak of \,{\it plus-minus weighted zero-sum sequences}, of \,{\it plus-minus weighted zero-sum free sequences}, and of the \,{\it monoid of plus-minus weighted zero-sum sequences}. By \thetag{$\dagger$}, a sequence $S \in \mc F(G_0)$ belongs to $\mc B_\pm(G_0)$ if and only if $S = S'S''$ for some $S',\,S'' \in \mc F(G_0)$ such that $\sigma(S') =\sigma(S'')$, and then $\sigma(S) =\sigma(S')+\sigma(S'') \in 2G$.

If $t \in [0,r]$ is maximal such that $2 \nmid n_i$, then, by \cite[Corollary 6.8]{B-M-O-S22}, we have
\begin{equation} \label{daven-2}
1 + \sum_{i=1}^t (n_i-1) + \sum_{i=t+1}^r \frac{n_i}{2} \le \mathsf D_{\pm} (G) \le \mathsf D (G) \,.
\end{equation}
\medskip

\medskip
\noindent
{\bf Transfer homomorphisms.} Transfer homomorphisms (see Definition \ref{2.1} below) are a key tool in Factorization Theory. A transfer homomorphism $\theta \colon H \to B$ allows to pull back arithmetic properties from the (in general, simpler) monoid $B$ to the monoid $H$ (the initial object of interest). The classic example of a transfer homomorphism is the homomorphism from a Krull monoid to its associated monoid of zero-sum sequences. To be more precise, let $H$ be a Krull monoid with class group $G$ and let $G_0 \subset G$ denote the set of classes containing prime divisors. Then there is a transfer homomorphism $\theta \colon H \to \mathcal B (G_0)$, and thus all arithmetic investigations (mainly done with methods from additive combinatorics) of monoids of zero-sum sequences find their application in the arithmetic of Krull monoids.

\smallskip
\begin{definition} \label{2.1}
A monoid homomorphism  $\theta \colon H \to B$ is called a {\it transfer homomorphism} if it has the following properties.

\begin{enumerate}
\item[{\bf (T1)}] $B = \theta(H) B^\times$  and  $\theta^{-1} (B^\times) = H^\times$.

\item[{\bf (T2)}] If $u \in H$, \ $b,\,c \in B$  and  $\theta (u) = bc$, then there exist \ $v,\,w \in H$ \ such that \ $u = vw$,  $\theta (v) \in bB^{\times}$, and  $\theta (w) \in c B^{\times}$.
\end{enumerate}
\end{definition}

To define the catenary degree in the fibers of a transfer homomorphism, let $\theta \colon H \to B$ be a transfer homomorphism of atomic monoids, and let $\overline \theta \colon \mathsf Z (H) \to \mathsf Z (B)$ be the unique monoid homomorphism satisfying $\overline \theta ( u H^{\times}) = \theta (u) B^{\times}$ for every $u \in \mathcal A (H)$. For $a \in H$, let $\mathsf c (a, \theta)$ be the smallest $N \in \N_0 \cup \{\infty\}$ with the following property:

\begin{enumerate}
\item[]
If $z,\, z' \in \mathsf Z(a)$ and $\overline \theta (z) =
\overline \theta (z')$, then there exist factorizations $z=z_0, \ldots, z_k \in \mathsf Z(a)$ for some $k \in \N_0$ such that $\overline \theta (z_i) = \overline \theta (z)$ and \ $\mathsf d (z_{i-1}, z_i) \le N$ for all $i \in [1,k]$ \ (we say that $z$ and $z'$ can be concatenated by an $N$-chain in the fiber
\ $\mathsf Z_H (a) \cap \overline \theta ^{-1} (\overline \theta
(z)$)\,).
\end{enumerate}
We call
\[
\mathsf c (H, \theta) = \sup \{\mathsf c (a, \theta) \colon a \in H \} \in \N_0 \cup \{\infty\}
\]
the \,{\it catenary degree of \,$\theta$ in the fibres}.

\smallskip

\begin{proposition} \label{2.2}
Let $\theta \colon H \to B$ be a transfer homomorphism of atomic monoids and $a \in H$. Then
\begin{itemize}
\item
$\mathsf L_H (a) = \mathsf L_B ( \theta (a))$. In particular $\mathcal L (H) = \mathcal L (B)$.

\smallskip

\item
$\mathsf c ( \theta (a) ) \le \mathsf c (a) \le \max \{ \mathsf c ( \theta (a) ), \mathsf c (a, \theta) \}$ and
      $\mathsf c (B) \le \mathsf c (H) \le \max \{ \mathsf c (B), \mathsf c (H, \theta) \}$.

\smallskip

\item
If\, $\mathsf c (H, \theta) \le \mathsf c (B)$, then $\mathsf c (H)=\mathsf c (B)$ and $\mathsf c_{\mon} (H)=\mathsf c_{\mon} (B)$.
\end{itemize}
\end{proposition}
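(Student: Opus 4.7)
The plan is to work with the induced monoid homomorphism $\overline \theta \colon \mathsf Z(H) \to \mathsf Z(B)$. Three properties drive the argument. First, $\overline \theta$ preserves lengths, since (T1) and (T2) force it to send atoms to atoms (up to associates). Second, $\overline \theta$ is distance nonincreasing, that is $\mathsf d(\overline \theta(z),\overline \theta(z')) \le \mathsf d(z,z')$, because $\mathsf Z(H)$ and $\mathsf Z(B)$ are free and $\overline \theta$ acts factor-wise. Third, iterated application of (T2) yields the lifting property: for every $a \in H$, the restriction $\overline \theta \colon \mathsf Z_H(a) \to \mathsf Z_B(\theta(a))$ is surjective.

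Part 1 is then immediate. The inclusion $\mathsf L_H(a) \subseteq \mathsf L_B(\theta(a))$ follows from length preservation, and the reverse from surjectivity on fibers. The inequality $\mathsf c(\theta(a)) \le \mathsf c(a)$ follows by lifting any pair in $\mathsf Z_B(\theta(a))$ to a pair in $\mathsf Z_H(a)$, connecting them with a $\mathsf c(a)$-chain in $H$, and pushing the chain forward via $\overline \theta$.

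The crux of part 2 is the bound $\mathsf c(a) \le \max\{\mathsf c(\theta(a)),\mathsf c(a,\theta)\}$. I would establish the following step-lifting lemma: given $y \in \mathsf Z_H(a)$ and $\overline y' \in \mathsf Z_B(\theta(a))$ with $\mathsf d(\overline \theta(y),\overline y') \le N$, there exists $y' \in \mathsf Z_H(a)$ such that $\overline \theta(y') = \overline y'$ and $\mathsf d(y,y') \le N$. To prove it, write $\overline \theta(y) = \overline x\, \overline v$ and $\overline y' = \overline x\, \overline w$ with $|\overline v|,|\overline w| \le N$; split $y = u v^{\ast}$ in $\mathsf Z(H)$ so that $\overline \theta(u) = \overline x$ and $\overline \theta(v^{\ast}) = \overline v$ (possible since $\mathsf Z(H)$ is free and $\overline \theta$ acts atom-wise); use (T2) iteratively to lift $\overline w$ to a factorization $w^{\ast}$ of $\pi(v^{\ast})$ in $H$; and set $y' = u w^{\ast}$. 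Iterating this step along a $\mathsf c(\theta(a))$-chain in $\mathsf Z_B(\theta(a))$ from $\overline \theta(z)$ to $\overline \theta(z')$ brings $z$ to some $y_k \in \mathsf Z_H(a)$ lying over $\overline \theta(z')$; an in-fiber $\mathsf c(a,\theta)$-chain then connects $y_k$ to $z'$.

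For part 3, the assumption $\mathsf c(H,\theta) \le \mathsf c(B)$ combined with part 2 gives $\mathsf c(H) \le \mathsf c(B) \le \mathsf c(H)$, hence equality. The monotone equality follows by rerunning the argument while tracking lengths: the step-lifting lemma preserves lengths, $|y'| = |\overline y'|$, so a monotone $\mathsf c_{\mon}(B)$-chain in $\mathsf Z_B(\theta(a))$ lifts to a monotone chain in $\mathsf Z_H(a)$, and the closing in-fiber chain is automatically monotone because every factorization in a fixed fiber has the same length as its common image. The main obstacle is cleanly formulating and proving the step-lifting lemma and handling the $B^\times$-ambiguity inherent in (T2); once $\overline \theta$ is seen as an atom-wise map on the free monoid $\mathsf Z(H)$, this reduces to routine bookkeeping.
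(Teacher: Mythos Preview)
Your argument is correct and is essentially the standard proof that appears in the reference the paper cites. The paper itself does not give a proof of this proposition; it simply refers to \cite[Proposition 3.2.3, Theorem 3.2.5, and Lemma 3.2.6]{Ge-HK06a}, and what you have outlined---the atom-wise lifted map $\overline\theta$ preserving lengths and being distance-nonincreasing, the surjectivity of $\overline\theta$ on fibers via iterated (T2), the step-lifting lemma, and the final in-fiber concatenation---is precisely the argument developed there. Your observation that in-fiber chains are automatically monotone (because all preimages of a fixed factorization have the same length) is exactly what makes the monotone statement go through with no extra work beyond $\mathsf c(B) \le \mathsf c_{\mon}(B)$.
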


\begin{proof}
See \cite[Proposition 3.2.3, Theorem 3.2.5, and Lemma 3.2.6]{Ge-HK06a}.
\end{proof}

\smallskip

\begin{proposition} \label{2.3}
 Let\, $[\,\cdot\,] \colon P \to G$ be a surjective map between nonempty sets and $\Theta \colon \mc F(P) \to \mc F(G)$ the unique monoid homomorphism satisfying $\Theta(p) = [p]$ for all $p \in P$. Let $B \subset \mc F(G)$ be a submonoid, $H = \Theta^{-1}(B) \subset \mc F(P)$ and $\theta = \Theta \rest H\colon H \to B$. Then $\theta$ is a transfer homomorphism, and $\mathsf c(H,\theta) \le 2$.
\end{proposition}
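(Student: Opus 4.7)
The plan is to verify (T1) and (T2) directly from unique factorization in $\mc F(P)$ and $\mc F(G)$, and then bound $\mathsf c(H, \theta)$ via a $2 \times 2$ swap argument in the fibre of $\overline\theta$.

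Since $\mc F(P)$ and $\mc F(G)$ are reduced, $H^{\times} = B^{\times} = \{1\}$, and (T1) reduces to surjectivity of $\theta$: given $b = g_1 \cdots g_{\ell} \in B$, pick pre-images $p_i \in [\,\cdot\,]^{-1}(g_i)$ and form $v = p_1 \cdots p_{\ell}$; then $\Theta(v) = b \in B$, so $v \in H$ and $\theta(v) = b$. For (T2), given $u = p_1 \cdots p_{\ell} \in H$ with $\Theta(u) = bc$, unique factorization in $\mc F(G)$ supplies a partition $[1, \ell] = I \sqcup J$ with $\prod_{i \in I} [p_i] = b$ and $\prod_{j \in J} [p_j] = c$. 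Setting $v = \prod_{i \in I} p_i$ and $w = \prod_{j \in J} p_j$ gives $u = vw$ with $\Theta(v) = b$ and $\Theta(w) = c$ in $B$, so $v, w \in H$ and (T2) holds.

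Now fix $a \in H$ and factorizations $z = u_1 \cdots u_n$, $z' = u'_1 \cdots u'_n \in \mathsf Z(a)$ with $\overline\theta(z) = \overline\theta(z')$. Since $\theta$ is a transfer homomorphism, $\mc A(H) = \{u \in H : \Theta(u) \in \mc A(B)\}$, so after reordering $z'$ we may assume $\theta(u_i) = \theta(u'_i)$ for every $i \in [1, n]$. For each $g \in G$ set $P_g = [\,\cdot\,]^{-1}(g)$, and encode the fibre by the non-negative integer matrices $M^{(g)} := \bigl(\mathsf v_p(u_i)\bigr)_{p \in P_g,\, i \in [1, n]}$ and $N^{(g)} := \bigl(\mathsf v_p(u'_i)\bigr)_{p \in P_g,\, i \in [1, n]}$; they share row sums $\mathsf v_p(a)$ (from $\Theta(z) = \Theta(z') = \Theta(a)$) and column sums $\mathsf v_g(\theta(u_i))$ (from $\theta(u_i) = \theta(u'_i)$).

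The key move is a \emph{swap}: pick $g \in G$, $p, p' \in P_g$, and $i \ne j$ with $\mathsf v_p(u_i), \mathsf v_{p'}(u_j) \ge 1$, and replace $u_i, u_j$ by $\widetilde u_i = u_i p'/p$ and $\widetilde u_j = u_j p/p'$, leaving the other atoms untouched. Because $[p] = [p']$, one computes $\Theta(\widetilde u_i) = \Theta(u_i)$ and $\Theta(\widetilde u_j) = \Theta(u_j)$, so $\widetilde u_i, \widetilde u_j \in \mc A(H)$; the new factorization $\widetilde z$ stays in the fibre $\overline\theta^{-1}(\overline\theta(z))$, and since only two atoms change, $\mathsf d(z, \widetilde z) \le 2$. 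At the matrix level this is the classical $2 \times 2$ circulation that preserves all row and column sums. Invoking the standard combinatorial fact that two non-negative integer matrices with identical row and column sums are connected by a sequence of such $2 \times 2$ circulations (applied to each $g$ in turn) produces a chain of swap moves inside the fibre connecting $z$ to $z'$; hence $\mathsf c(a, \theta) \le 2$ for every $a$, and so $\mathsf c(H, \theta) \le 2$. The matrix connectivity fact is the only non-trivial step; everything else is book-keeping in the free monoids.
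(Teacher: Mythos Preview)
Your argument is correct. The verification of (T1) and (T2) is straightforward (you might mention explicitly that $\theta^{-1}(1)=\{1\}$ because $\Theta$ preserves length, but this is trivial). The swap argument for the fibre catenary degree is clean: the key observations that a swap keeps $\theta(u_i)$ and $\theta(u_j)$ fixed (so the new factors remain atoms and the fibre is preserved) and that it changes at most two atoms (so $\mathsf d\le 2$) are exactly what is needed, and the reduction to the connectivity of contingency tables under $2\times 2$ circulations is legitimate. Only finitely many $g\in G$ and finitely many $p\in P_g$ occur, so the matrices are effectively finite.

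The paper takes a different route: it simply invokes \cite[Theorem 3.2.8]{Ge-HK06a}, a general result about transfer homomorphisms arising from congruences on free monoids, after checking that the equivalence $u\sim v :\Longleftrightarrow \Theta(u)=\Theta(v)$ satisfies the hypotheses there. So the paper's proof is a one-line appeal to existing machinery, whereas yours is a self-contained elementary argument. What your approach buys is transparency and independence from the reference; what the paper's approach buys is brevity and the recognition that this is a special case of a known abstract pattern. Both are perfectly acceptable, and in fact your swap/matrix argument is essentially what underlies the cited theorem in the relevant special case.
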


\begin{proof}
We apply \cite[Theorem 3.2.8]{Ge-HK06a} with $D = \mc F(P)$, \ $T=U = \{ 1 \}$, and $P_0= \emptyset$. For $u,\,v \in D$,  we define $u \sim v$ if  $\Theta(u)=\Theta(v$). If $u,\,v \in D$, \ $u \sim v$ and $u\in H$, then $\Theta(u)=\Theta(v) \in B$ and thus $v \in H$. For $p,\,p' \in P$, we have $p \sim p'$ if and only if $[p] = [p'] \in G$, whence we may identify $\widetilde P=G$. Thus, it follows that $\widetilde D = \mc F(G)$,  $\widetilde{\bs \beta} = \Theta$, \ $\widetilde H = B$,  and $\bs \beta = \theta$.
\end{proof}

\medskip
\section{Galois-invariant orders in number fields and binary quadratic forms} \label{3}
\medskip

The Davenport constant $\mathsf D (G)$ allows the following arithmetic interpretation which goes back to the 1960s (\cite{Ro63}). If $K$ is an algebraic number field, $\mathcal O_K$ its ring of integers, and $G$ its ideal class group, then $\mathsf D (G)$ is the largest $\ell \in \N$ for which there is an irreducible element $a \in \mathcal O_K$ such that $a \mathcal O_K = \mathfrak p_1 \cdot \ldots \cdot \mathfrak p_{\ell}$, where $\mathfrak p_1, \ldots, \mathfrak p_{\ell}$ are prime ideals of $\mathcal O_K$. B.~Schmid found a  link between Invariant Theory and Zero-Sum Theory by observing that the Noether number (from Invariant Theory) equals the Davenport constant (see \cite{Sc91a} and \cite[Section 4.3]{Cz-Do-Ge16a}). Halter-Koch gave an arithmetic interpretation of the (small) weighted Davenport constant $\mathsf d_{\Gamma} (G)$
(\cite{HK14b}),  and  W.A.~Schmid et al. established a transfer homomorphism from monoids, defined by norms of integers of a Galois algebraic number field, to monoids of weighted zero-sum sequences  (\cite{B-M-O-S22}). In this section, we go far beyond these former results. We consider monoids, defined by norms of (totally positive) elements of Galois-invariant orders in a Galois algebraic number field, and construct transfer homomorphisms onto monoids of weighted zero-sum sequences (Theorem \ref{3.2}). Recall that norm groups play a crucial role in  class field theory (\cite{HK22a}). Our approach, via not necessarily maximal orders of number fields, is the basis for the  study of monoids of positive integers, represented by integral binary quadratic forms (Theorem \ref{3.5} and Corollary \ref{3.6}).

\smallskip
Let $K$ be a Galois algebraic number field, $\Gamma = \Gal(K/\Q)$ its Galois group,
$\mathcal O_K$ its ring of integers, $\mc O$ a Galois-invariant order in $K$, and $\mf m = (\mc O \DP \mathcal O_K)$ the conductor of $\mc O$. Let $K^+$ be the group of  totally positive elements of $K$ and $\mc O^+= \mc O \cap K^+$. We consider simultaneously the totally positive and the general situation, and we write $(\ast)^{(+)}$ if we mean both $(\ast)$ or $(\ast)^+$.

We denote by $\ms I(\mc O)$ the group of invertible fractional ideals of $\mc O$,  by $\ms P (\mc O)$ the set of all nonzero prime ideals of $\mc O$, by $\ms H^{(+)}(\mc O)$ the subgroup of fractional principal ideals $a \mc O$ with $a \in K^{(+)}$, and by $\mc C^{(+)}(\mc O) = {\rm Pic}^{(+)}(\mc O) = \ms I(\mc O)/\ms H^{(+)}(\mc O)$ the (narrow) class group of $\mc O$. The groups $\ms I(\mc O)$, \ $\ms H^{(+)}(\mc O)$, and $\mc C^{(+)}(\mc O)$ are $\Gamma$-modules, and we write $\mc C^{(+)}(\mc O)$ additively \,(then $\bs 0 = [\mc O]^{(+)} = \ms H^{(+)}(\mc O)\in \mc C^{(+)}(\mc O)$). For an invertible fractional ideal $\mf a$ of $\mc O$, we denote by $[\mf a]^{(+)} \in \mc C^{(+)}(\mc O)$ the (narrow) ideal class containing $\mf a$.

For a $\Gamma$-invariant ideal $\mf n$ of $\mc O_K$  such that $\mf n \subset \mf m$, we denote by $\ms P_\mf n(\mc O)$ the set of all prime ideals of $\mc O$ coprime to $\mf n$ and by $\ms I_\mf n(\mc O)$ the monoid of all ideals of $\mc O$ coprime to $\mf n$.
For a positiv integer $m \in \N \cap \mf m$, we denote by $\P_m$ the set of all primes not dividing $m$ and by $\N_m$ the monoid of all positive integers coprime to $m$. Apparently, $\N_m = \mc F(\P_m)$ is the free abelian monoid with basis $\P_m$. We set $\ms P_m(\mc O) = \ms P_{m\mc O_K}(\mc O)$, \ $\ms I_m(\mc O) = \ms I_{m\mc O_K}(\mc O)$, and we denote by $\ms H_m^{(+)}(\mc O) = \ms H^{(+)}(\mc O) \cap \ms I_m(\mc O)$ the monoid of all principal ideals $a\mc O$ coprime to $m\mc O_K$, where $a \in \mc O^{(+)}$.

\smallskip
\begin{lemma} \label{3.1}
Let $m \in  \N \cap \mf m$.
\begin{enumerate}
\item
The maps
\[
\nu \colon \begin {cases}
\ms I_m(\mc O_K) &\to \ \ms I_m(\mc O) \\
\quad\mf A &\mapsto \ \mf A \cap \mc O
\end{cases} \qquad \text{and} \qquad
j \colon \begin {cases} \ms I_m(\mc O) &\to \  \ms I_m(\mc O_K) \\ \quad\mf a &\mapsto \ \mf a \mc O_K
\end{cases}
\]
are mutually inverse $\Gamma$-isomorphisms such that $j(\ms P_m(\mc O)) = \ms P_m(\mc O_K)$, and $\ms I_m(\mc O)$ is the free abelian monoid with basis $\ms P_m(\mc O)$. Moreover, $\mc O/\mf a \cong \mc O_K/\mf a \mc O_K$ for every ideal $\mf a \in \ms I_m(\mc O)$.

\smallskip

\item
$\ms I_m(\mc O) \subset \ms I(\mc O)$, and if \,$\mf p \in \ms P_m(\mc O)$, then $\mc O_\mf p = (\mc O_K)_\mf p = (\mc O_K)_{\mf p\mc O_K}$ is a discrete valuation domain $($we denote by
$\mathsf v_\mf p = \mathsf v_{\mf p\mc O_K}$ the associated discrete  valuation of\, $K$, and we call $\dim_{\F_p}(\mc O_K/\mf p\mc O_K)$ the \,{\it degree}\, of \,$\mf p)$. For an ideal $\mf a \in \ms I_m(\mc O)$, we set $\mathsf v_\mf p (\mf a) = \mathsf v_\mf p(\mf a \mc O_K)$, and then
\[
\mf a = \prod_{\mf p \in\ms P_m(\mc O)} \mf p^{\mathsf v_\mf p(\mf a)}.
\]

\smallskip

\item
The group \,$\mc C^{(+)}(\mc O)$ is finite, and every class $g \in \mc C^{(+)}(\mc O)$ contains infinitely many prime ideals of degree $1$ of $\mc O$.
\end{enumerate}
\end{lemma}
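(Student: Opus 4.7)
The overall plan is to reduce all three parts to the classical ``conductor correspondence'' between $\mc O$-ideals and $\mc O_K$-ideals coprime to $\mf m$, exploiting the crucial inclusion $m\mc O_K \subset \mc O$ which follows from $m \in \mf m = (\mc O \DP \mc O_K)$. With this correspondence in hand, the finiteness and density statements in (3) reduce to classical results for the Dedekind domain $\mc O_K$.

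For parts (1) and (2), I would first verify $(\nu \circ j)(\mf a)=\mf a$ and $(j \circ \nu)(\mf A)=\mf A$ by observing that the coprimality of $\mf A$ and $\mf a$ to $m$, together with $m\mc O_K \subset \mc O$, yields $\mf A + \mc O = \mc O_K$ and $\mf a + m\mc O = \mc O$. A short computation with these Bezout-type identities gives both equalities. The maps $\nu$ and $j$ are $\Gamma$-equivariant since $\Gamma$ fixes $\mc O$ and $m$ elementwise, and they carry primes to primes because the induced ring homomorphism $\mc O/\mf a \to \mc O_K/\mf a\mc O_K$ is an isomorphism: injectivity is clear from the definition of $\nu$, and surjectivity follows from $\mc O + \mf a\mc O_K = \mc O_K$. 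Freeness of $\ms I_m(\mc O)$ with basis $\ms P_m(\mc O)$ then transfers from the Dedekind statement for $\mc O_K$. For part (2), the localization identification $\mc O_\mf p = (\mc O_K)_\mf p$ follows because $m$ is a unit in $\mc O_\mf p$, so the inclusion $m\mc O_K \subset \mc O$ becomes $\mc O_K \subset \mc O_\mf p$ after localization; this forces equality of the two localizations, and the result is a DVR as a nonzero localization of $\mc O_K$. The factorization formula is then immediate from unique factorization in $\ms I_m(\mc O_K)$ via the bijection $j$.

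For part (3), I would establish finiteness of $\mc C^{(+)}(\mc O)$ by exhibiting a surjection onto it from a known-finite group. Every class of $\mc C^{(+)}(\mc O)$ contains an ideal coprime to $m$ (clear denominators by an element of $\mc O^{(+)}$ coprime to $m$, which exists by weak approximation together with the existence of totally positive units in $\mc O_K$), so $\ms I_m(\mc O)/\bigl(\ms H^{(+)}(\mc O)\cap\ms I_m(\mc O)\bigr)$ surjects onto $\mc C^{(+)}(\mc O)$. Via $j$ this group is identified with a quotient of $\ms I_m(\mc O_K)$ whose principal kernel contains $\{\alpha\mc O_K \colon \alpha \in \mc O_K^{(+)},\ \alpha \equiv 1 \pmod{m\mc O_K}\}$, since any such $\alpha$ lies in $1 + m\mc O_K \subset 1 + \mc O \subset \mc O^{(+)}$. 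Hence $\mc C^{(+)}(\mc O)$ is a quotient of the narrow ray class group of $\mc O_K$ of conductor $m\mc O_K\cdot\infty$, which is finite by classical class field theory. For the density statement, Chebotarev's density theorem applied to the corresponding narrow ray class field of $\mc O_K$ produces infinitely many degree-one primes $\mf P$ in each narrow ray class of $\mc O_K$; by part (1), $\mf p = \nu(\mf P)$ is a prime of $\mc O$ with $\mc O/\mf p \cong \mc O_K/\mf P \cong \F_p$, hence of degree one, lying in the prescribed class of $\mc C^{(+)}(\mc O)$.

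The main obstacle is the bookkeeping in part (3): one has to verify that the conductor correspondence of (1) really does identify narrow ideal classes of $\mc O$ with a quotient of the narrow ray class group of $\mc O_K$, which requires checking that the congruence subgroup $1 + m\mc O_K$ maps into $\mc O^{(+)}$ (as noted, this is ensured by $m\mc O_K \subset \mc O$) and that Chebotarev's theorem in this narrow setting supplies infinitely many degree-one prime representatives in each class. Once these identifications are in place, parts (1) and (2) are routine consequences of the coprimality identities.
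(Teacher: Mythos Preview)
Your approach is the same as the paper's: parts (1)--(2) rest on the conductor correspondence driven by $m\mc O_K \subset \mc O$, and part (3) realizes $\mc C^{(+)}(\mc O)$ as a quotient of a (narrow) ray class group of $\mc O_K$ and then appeals to a density theorem for primes in ray classes. The only difference is presentational---the paper outsources each step to \cite{HK20a} (Theorems 2.11.2--2.11.6, 3.6.7, 4.4.2) rather than arguing directly. Two small slips in your write-up: $\Gamma$ fixes $\mc O$ only setwise, not elementwise (which is all you need for equivariance), and the chain $1+m\mc O_K \subset 1+\mc O \subset \mc O^{(+)}$ fails at the last inclusion in the narrow case; what actually makes your argument work there is that your ray modulus $m\mc O_K\cdot\infty$ already forces $\alpha$ to be totally positive, so $\alpha \in (1+m\mc O_K)\cap K^+ \subset \mc O \cap K^+ = \mc O^+$.
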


\begin{proof}
1. Obviously, $j$ is a monoid homomorphism satisfying $\gamma (\mf a) \mc O_K = \gamma (\mf a \mc O_K)$  for all $\gamma \in \Gamma$. By \cite[Theorem 2.11.3]{HK20a}, we have $(\mf A\cap \mc O)\mc O_K = \mf A$ for all $\mf A \in \ms I_\mf m(\mc O_K)$, and $\mf a \mc O_K \cap \mc O= \mf a$ for all $\mf a \in \ms I_\mf m(\mc O)$. As $\ms I_m(\mc O) \subset \ms I_\mf m(\mc O)$, it suffices to prove that $\mf A \in \ms I_m(\mc O_K)$ implies $\mf A \cap \mc O \in \ms I(\mc O)$, and that $\mf a \in \ms I_m(\mc O)$ implies $\mf a \mc O_K\in \ms I_m(\mc O_K)$ \,(for then $j$ is bijective and $\nu$ is its inverse).

If $\mf A \in \ms I_m(\mc O_K)$, then $\mf A + m\mc O_K=\mc O_K$, and as $m \mc O_K\subset \mc O$, the modular law implies that $\mf A \cap \mc O +m\mc O_K = (\mf A+m\mc O_K) \cap \mc O = \mc O$.

If $\mf a \in \ms I_m(\mc O)$, then $\mf a + m\mc O_K = \mc O$, hence $1 \in \mf a +m\mc O_K \subset \mf a \mc O_K+m\mc O_K$, and $\mf a\mc O_K+m\mc O_K= \mc O_K$.

Since $\ms P_m(\mc O) = \ms P_\mf m(\mc O) \cap \ms I_m(\mc O)$ and $j(\mc P_\mf m
(\mc O)) = \ms P_\mf m(\mc O_K)$ by \cite[Theorem 2.11.4]{HK20a}, it follows that $j(\ms P_m(\mc O)) = \ms P_m(\mc O_K)$. In particular, $\nu(\ms P_m(\mc O_K)) = \ms P_m(\mc O)$, and as $\ms I_m(\mc O_K)$ is the free abelian monoid with basis $\ms P_m(\mc O_K)$, it follows that $\ms I_m(\mc O)$ is the free abelian monoid with basis $\ms P_m(\mc O)$.

The isomorphism $\mc O/\mf a \cong \mc O_K/\mf a \mc O_K$ for every ideal $\mf a \in \ms I_m(\mc O)$ holds by \cite[Theorem 2.11.3]{HK20a},

\smallskip

2. We obtain $\ms I_m(\mc O)\subset \ms I_{\mf m}(\mc O) \subset \ms I(\mc O)$ by \cite[Theorem 2.11.3]{HK20a}. If $\mf p \in \ms P_m(\mc O)$, then $\mf m \not \subset \mf p$, and therefore $\mc O_\mf p = (\mc  O_K)_\mf p$ by \cite[Theorem 2.11.2]{HK20a}. Since $\mc O \setminus \mf p \subset \mc O_K \setminus \mf p \mc O_K$, we get $(\mc O_K)_\mf p \subset(\mc O_K)_{\mf p\mc O_K}$. As to the reverse inclusion, let $x = s^{-1}a\in (\mc O_K)_{\mf p \mc O_K}$, where $a \in \mc O_K$ and $s \in \mc O_K \setminus \mf p\mc O_K$. If $t \in \mf m \setminus \mf p$, then $st \in \mc O \setminus \mf p$, and $x = (st)^{-1}(at) \in \mc O_\mf p$. To prove the product formula for $\mf a \in \ms I(\mc O)$, we calculate
\[
\Bigl[\prod_{\mf p \in \ms P_m(\mc O)}\mf p^{\mathsf v_\mf p(\mf a)}\Bigr]\mc O_K =\negthinspace \prod_{\mf p \in \ms P_m(\mc O)}(\mf p\mc O_K)^{\mathsf v_{\mf p\mc O_K}(\mf a\mc O_K)}= \negthinspace\prod_{\mf P \in \ms P_m(\mc O_K)}\mf P^{\mathsf v_\mf P(\mf a \mc O_K)}= \negthinspace\prod_{\mf P \in \ms P(\mc O_K)}\mf P^{\mathsf v_\mf P(\mf a \mc O_K)} = \mf a \mc O_K,
\]
and as $j$ is an isomorphism, it follows that
\[
\mf a = \prod_{\mf p \in\ms P_m(\mc O)} \mf p^{\mathsf v_\mf p(\mf a)}.
\]

\smallskip

3. Let $\mc C_K^\mf m$ be the ray class group and $\mc C_K^{\circ \mf m}$ the small ray class group modulo $\mf m$ of $K$ as defined in \cite[Definition 3.6.6]{HK20a}. By \cite[Theorem 3.6.7]{HK20a}, these groups are finite, and for an ideal $\mf A \in \ms I_\mf m(\ms O_K)$ we denote by $[\mf A]_\mf m \in \mc C_K^\mf m$ the ray class and by $[\mf A]_\mf m^\circ \in \mc C^{\circ \mf m}_K$ the small ray class containing $\mf A$. By \cite[Theorem 2.11.6]{HK20a}, there exist epimorphisms $\rho^+ \colon \mc C_K^\mf m \to \mc C^+(\mc O)$ and $\rho \colon \mc C_K^{\circ\mf m} \to \mc C(\mc O)$ such that $\rho^+([\mf A]_\mf m) = [\mf A \cap \mc O]^+$ and $\rho([\mf A]_\mf m^\circ) = [\mf A \cap \mc O]$ for every ideal $\mf A \in \ms I_\mf m(\mc O_K)$. Hence the groups $\mc C(\mc O)$ and $\mc C^+(\mc O)$ are finite.

Since every class $g \in \mc C(\mc O)$ is composed of classes in $\mc C^+(\mc O)$, it suffices to prove that every class in $\mc C^+(\mc O)$ contains infinitely many prime ideals of degree $1$. Thus, let $g \in \mc C^+(\mc O)$ and $C \in \mc C_K^\mf m$ such that $\rho^+(C)=g$. By \cite[Theorem 4.4.2]{HK20a}, the set $\ms P_m(\mc O_K)\cap C$ has positive Dirichlet density, and therefore there exist infinitely many prime ideals $\mf P$
of degree $1$ in $\ms P_\mf m(\mc O_K) \cap C$, and for these the prime ideals $\mf P  \cap \mc O \in \ms P_\mf m(\mc O)$ are prime ideals of degree $1$ in $g$.
\end{proof}

\medskip

Let again $m \in \N \cap \mf m$. If $\mf p \in \ms P(\mc O)$, then $\mf p \cap \Z = \mf p\mc O_K \cap \Z = p\Z$, where $p$ is the (unique) prime lying in $\mf p$. Then $p \in \P_m$ holds if and only if $\mf p \in \ms P_m(\mc O)$, and we say that $\mf p$ lies above $p$.

For an ideal $\mf a \in \ms I_m(\mc O)$, we set $\mf N(\mf a)= \mf N(\mf a \mc O_K) = |\mc O/\mf a|\in \N_m$. The map $\mf N \colon \ms I_m(\mc O) \to \N_m$ is a monoid homomorphism, and if $\mf p \in \ms P(\mc O)$ and $\mf p \cap \Z = p\Z$, then $\mf N(\mf p) = p^{f_p}$ , where $f_p \in \N$ is the inertia degree of $p$ in $K$. Hence
\[
\mf N(\ms I_m(\mc O)) =\mf N(\ms I_m(\mc O_K))=\{n \in \N \colon f_p\t \mathsf v_p(n) \ \text{ for all } \ p \in \P_m\}.
\]
is the free abelian monoid with basis $\{p^{f_p} \colon p \in\P_m\}$ which only depends on $K$, and we set
\[
N_m(K) = \mf N(\ms I_m(\mc O_K)) \quad \text{and} \quad
N_m^{(+)}(\mc O)=\mf N(\ms H_m^{(+)}(\mc O)) = \{ |\mathsf N_{K/\Q}(a)| \colon 0 \ne a \in \mc O^{(+)}\} \,.
\]

\medskip
Now we can start with the construction of  the announced transfer homomorphism from the monoid $N_m^{(+)}(\mc O) \subset N_m(K)$  onto the monoid $\mc B_\Gamma \big(\mc C^{(+)}(\mc O) \big)$ of $\Gamma$-weighted  zero-sum sequences.

For every prime $p \in \P_m$, we fix a prime ideal $\mf p_p \in \ms P_m(\mc O)$ above $p$, and since $\mc C^{(+)}(\mc O)$ is finite and every class contains infinitely many prime ideals of degree $1$, we can do this in such a way that $\{[\mf p_p]^{(+)} \colon p \in \P_m, \ f_p=1\} = \mc C^{(+)}(\mc O)$. If $p \in \mf p_p$, then $\{\mf P \in \ms P_m(\mc O_K) \colon p \in \mf P\} = \{\gamma \mf p_{p} \mc O_K \colon \gamma \in \Gamma\}$ by \cite[Theorem 2.5.4]{HK20a}, hence \,$\{\mf p \in \ms P_m(\mc O) \colon p \in \mf p\}= \{\gamma \mf p_p \colon \gamma \in \Gamma\}$, and $\mf N(\gamma\mf p_{p}) = p^{f_p}$ for all $\gamma \in \Gamma$.

We denote by $\mc F(\mc C^{(+)}(\mc O))$ the  (multiplicative) free abelian monoid with basis $\mc C^{(+)}(\mc O)$ and define
\[
\Theta \colon N_m(K) \to \mc F(\mc C^{(+)}(\mc O)) \quad \text{by} \quad \Theta (n) = \prod_{p\in \P_m} \bigl([\mf p_p]^{(+)}\bigr)^{\mathsf v_p(n) /f_p} \in \mc F(\mc C^{(+)}(\mc O)).
\]
Then $\Theta$ is a monoid homomorphism, it is surjective, since $\{[\mf p_p]^{(+)} \colon p \in \P_m, \ f_p=1 \} = \mc C^{(+)}(\mc O)$, and $\Theta^{-1}(\mathsf 1)=\{1\}$.

\smallskip
\begin{theorem} \label{3.2}~
\begin{enumerate}
\item
Let $n \in N_m(K)$. Then \ $n \in N_m^{(+)}(\mc O)\,\iff\,\Theta(n) \in \mc B_\Gamma(\mc C^{(+)}(\mc O))$.

\smallskip

\item
The map $\theta = \Theta\restriction N_m^{(+)}(\mc O)\colon N_m^{(+)}(\mc O)\,\to\, \mc B_\Gamma(\mc C^{(+)}(\mc O))$ is a transfer homomorphism  satisfying \,$\mathsf c(N_m^{(+)}(\mc O), \theta)\le 2$.
\end{enumerate}
\end{theorem}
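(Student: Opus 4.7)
My plan is to reduce the theorem to Proposition \ref{2.3}, by recognizing that $\Theta$ already fits into its framework. Specifically, $N_m(K)$ is the free abelian monoid with basis $P = \{p^{f_p} \colon p \in \P_m\}$, and $\Theta$ is the extension to $\mc F(P)$ of the map $[\,\cdot\,]\colon P \to \mc C^{(+)}(\mc O)$ defined by $[p^{f_p}] = [\mf p_p]^{(+)}$, which is surjective by Lemma \ref{3.1}(3). Once I prove part (1), i.e.\ $\Theta^{-1}(\mc B_\Gamma(\mc C^{(+)}(\mc O))) = N_m^{(+)}(\mc O)$, Proposition \ref{2.3} applied with $B = \mc B_\Gamma(\mc C^{(+)}(\mc O))$ delivers both the transfer homomorphism property and the bound $\mathsf c(N_m^{(+)}(\mc O),\theta) \le 2$ automatically. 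So almost all of the real work is in part (1).

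For the ``only if'' direction of (1), I would take $n = |\mathsf N_{K/\Q}(a)|$ with $a \in \mc O^{(+)}$, use $\gcd(n,m)=1$ to place $a\mc O$ in $\ms I_m(\mc O)$, and factor $a\mc O = \prod_\mf p \mf p^{\mathsf v_\mf p(a)}$ via Lemma \ref{3.1}. Since $K/\Q$ is Galois, the primes above a rational prime $p \mid n$ form the orbit $\{\gamma\mf p_p \colon \gamma \in \Gamma\}$ of common norm $p^{f_p}$; comparing $p$-adic valuations of $\mf N(a\mc O) = n$ gives $\sum_{\mf p \mid p} \mathsf v_\mf p(a) = \mathsf v_p(n)/f_p$, exactly the multiplicity of $[\mf p_p]^{(+)}$ in $\Theta(n)$. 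Then, for each occurrence of $[\mf p_p]^{(+)}$ identified with a specific prime $\mf p = \gamma\mf p_p$ above $p$, I attach the weight $\gamma \in \Gamma$; the resulting weighted sum becomes $\sum_\mf p \mathsf v_\mf p(a)[\mf p]^{(+)} = [a\mc O]^{(+)} = \bs 0$, as required.

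Conversely, given $\Theta(n) \in \mc B_\Gamma(\mc C^{(+)}(\mc O))$ with witnessing weights $\gamma_{p,j} \in \Gamma$ (indexed by $p \in \P_m$ and $j \in [1,\mathsf v_p(n)/f_p]$), I would form $\mf a = \prod_{p,j}\gamma_{p,j}\mf p_p \in \ms I_m(\mc O)$. The weighted zero-sum condition gives $[\mf a]^{(+)} = \sum_{p,j}\gamma_{p,j}[\mf p_p]^{(+)} = \bs 0$, so $\mf a = a\mc O$ for some $a \in \mc O^{(+)}$, and taking norms yields $|\mathsf N_{K/\Q}(a)| = \prod_{p,j}p^{f_p} = n$, hence $n \in N_m^{(+)}(\mc O)$. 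The main obstacle I anticipate is the bookkeeping: in both directions one must use that $[\gamma\mf p_p]^{(+)} = \gamma[\mf p_p]^{(+)}$ (so Galois conjugation of primes matches the $\Gamma$-action on the class group) and that any prime $\mf p$ above $p$ can be realized as $\gamma\mf p_p$ for some $\gamma \in \Gamma$, so that the freedom in choosing $\Gamma$-weights exactly mirrors the freedom in choosing factorizations of ideals into Galois conjugates. Once this dictionary is in place, (1) is essentially a direct translation, and (2) is handed to us by Proposition \ref{2.3}.
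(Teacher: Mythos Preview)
Your proposal is correct and follows essentially the same route as the paper: both reduce part (2) to Proposition \ref{2.3} once part (1) is established, and both prove (1) by translating between prime factorizations of principal ideals $a\mc O \in \ms H_m^{(+)}(\mc O)$ and $\Gamma$-weighted zero-sum decompositions of $\Theta(n)$ via the dictionary $\gamma\mf p_p \leftrightarrow$ (copy of $[\mf p_p]^{(+)}$ with weight $\gamma$). Your indexing by individual weights $\gamma_{p,j}$ is in fact slightly cleaner than the paper's grouping $\Theta(n) = \prod_{\gamma}S_\gamma$ followed by splitting $N_{\gamma,g}$ into $N_{\gamma,p}$, but the substance is identical.
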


\begin{proof}
We set $G = \mc C^{(+)}(\mc O)$, and for every class $g \in G$ we set $\P_g = \{ p \in \P_m \colon [\mf p_p]^{(+)} = g\}$. By our choice of the family $(\mf p_p)_{p \in \P_m}$ it follows that $\P_g \ne \emptyset$ for all $g \in G$.

\smallskip

1. \,$\Rightarrow$:\, Let $n = \mf N(a\mc O)\in N_m^{(+)}(\mc O)$, where $a \in \mc O^{(+)}$ and $a\mc O\in \ms I_m(\mc O)$, say
\[
a\mc O = \prod_{\mf p \in \ms P_m(\mc O)} \mf p^{\mathsf v_\mf p(a)} = \prod_{p \in \P_m} \prod_{\gamma \in \Gamma} \gamma \mf p_p^{\mathsf v_{\gamma \mf p_p}(a)}, \quad \text{and} \quad 0= \sum_{\gamma\in \Gamma} \gamma \sum_{p \in\P_m}\mathsf v_{\gamma \mf p_p}(a)[\mf p_p]^{(+)}  \in G.
\]
Then we obtain
\[
n = \prod_{p \in \P} \prod_{\gamma \in \Gamma}p^{f_p \mathsf v_{\gamma \mf p_p}(a)}, \quad \text{and therefore} \quad \frac{\mathsf v_p(n)}{f_p}= \sum_{\gamma \in \Gamma}\mathsf v_{\gamma \mf p_p}(a) \ \text{ for all } \ p \in \P_m.
\]
It follows that
\[
\Theta(n) = \prod_{\mf p \in \ms P_m(\mc O)} \bigl([\mf p_p]^{(+)}\bigr)^{\mathsf v_p(n)/f_p} = \prod_{\gamma \in \Gamma}\prod_{p \in \P_m} \bigl([\mf p_p]^+\bigr)^{\mathsf v_{\gamma \mf p_p}(a)}\,\in \mc F(G),
\]
and since
\[
\sum_{\gamma \in \Gamma} \gamma \,\sigma \Bigl( \prod_{p \in \P_m}\bigl([\mf p_p]^+\bigr)^{\mathsf v_{\gamma \mf p_p}(a)} \Bigr)= \sum_{\gamma\in \Gamma} \gamma \sum_{p \in\P_m} \mathsf v_{\gamma \mf p_p}(a)\,[\mf p_p]^{(+)} =0\in G,
\]
we eventually see that $\Theta(n) \in\mc B_{\Gamma}(G)$.

\smallskip

$\Leftarrow$:\, Let
\[
\Theta(n) = \prod_{p \in \P_m}\bigl([\mf p_p]^{(+)}\bigr)^{\mathsf v_p(n)/f_p} \in \mc B_\Gamma(G) \quad \text{and} \quad \Theta(n) = \prod_{\gamma \in \Gamma} S_\gamma, \quad \text{where} \quad \sum_{\gamma \in \Gamma} \gamma \sigma(S_\gamma)=0.
\]
We set
\[
\Theta(n) =  \prod_{g \in G} g^{N_g} \quad \text{and, for $\gamma \in \Gamma$,} \quad S_\gamma = \prod_{g \in G} g^{N_{\gamma, g}}, \quad \text{where} \quad N_g,\, N_{\gamma, g} \in \N_0.
\]
For every $g \in G$ it follows that
\[
N_g = \sum_{p \in \P_g} \frac{\mathsf v_p(n)}{f_p} = \sum_{\gamma \in \Gamma}N_{\gamma,g}.
\]
For $g \in G$ and $\gamma \in \Gamma$ we split $N_{\gamma, g}$ such that
\[
N_{\gamma, g} = \sum_{p \in \P_g}N_{\gamma, p}\quad \text{and} \quad \sum_{\gamma \in \Gamma} N_{\gamma,p} = \frac{\mathsf v_p(n)}{f_p} \ \text{ for all } \ p \in \P_g.
\]
Now we set
\[
\mf a = \prod_{g \in G} \prod_{p \in \P_g} \prod_{\gamma \in
\Gamma} (\gamma \mf p_p)^{N_{\gamma,p}}\, \in \ms I_m(\mc O), \quad \text{whence} \quad \mf N(\mf a) = \prod_{g \in G} \prod_{p \in \P_g}\prod_{\gamma \in \Gamma} p^{f_pN_{\gamma,  p}}.
\]
If $g \in G$ and $p \in \P_g$, then
\[
\mathsf v_p(\mf N(\mf a)) = \sum_{\gamma \in \Gamma}f_p N_{\gamma,p} = \mathsf v_p(n),
\]
and therefore $\mf N(\mf a) = n$.
Since
\[
[\mf a]^{(+)} = \sum_{g \in G} \sum_{p \in \P_g} \sum_{\gamma \in \Gamma} N_{\gamma,p} \gamma g= \sum_{g \in G}\sum_{\gamma \in \Gamma} N_{\gamma, g} \gamma g = \sum_{\gamma \in \Gamma} \gamma \sigma(S_\gamma) = 0,
\]
we obtain $\mf a \in \ms H_m^{(+)}(\mc O)$, and consequently $n \in N_m^{(+)}(\mc O)$.

\smallskip

2. Apply Proposition \ref{2.3} with $P = \{p^{f_p} \colon p \in\P_m\}$, \ $G = \mc C^{(+)}(\mc O)$, \ $B =\mc B_\Gamma(\mc C^{(+)}(\mc O)$, \ $\mc F(P) = N_m(K)$, and $H = \Theta^{-1}(B) = N_m^{(+)}(\mc O)$.
\end{proof}

\bigskip
\noindent
{\bf Representations by binary quadratic forms.} Binary quadratic forms are a classic topic in number theory that can be traced back to Gauss. It is closely connected with the ideal theory of quadratic orders and the theory of continued fractions. For an integer $m \ge 2$, let $R_m^\circ(\Delta)$ denote the monoid of all integers $n \in \N_m$ that are represented by the principal class of the composition class group of not negative definite primitive integral quadratic forms of discriminant $\Delta = \mathsf d_Km^2$. We show that this monoid allows a transfer homomorphism to a monoid of weighted zero-sum sequences (Theorem \ref{3.5}). To do so, we substantially use the relationship between the theory of quadratic forms and quadratic orders (as presented in \cite[Chapters 5 and 6]{HK13a}) and the results in the first part of this section.

\smallskip
Let $\Delta$ be a quadratic discriminant, i.e., $\Delta \in \Z$ is not a square and $\Delta \equiv 0 \text{ or } 1\mmod 4$. We set $\Delta = 4D+s$, where $D \in \Z$ and $s \in \{0,1\}$ and consider the order
\[
\mc O_\Delta = \Z\Bigl[ \frac{s+\sqrt \Delta}2\Bigr]\quad \text{in the quadratic number field } \ K =\Q(\sqrt \Delta).
\]
Let $\tau$ be the non-trivial automorphism of $K$ so that $\Gamma = \Gal(K/\Q) = \{ \id_K,\tau\}$.
If $\mathsf d_K$ denotes the discriminant of $K$, then $\Delta = \mathsf d_Km^2$, where $m\mc O_K$ is the conductor of $\mc O_\Delta$.

\bigskip
Let $\mf F_\Delta$ be the (additively written) composition class group of not negative definite primitive integral quadratic forms of discriminant $\Delta$. For a not negative definite primitive integral quadratic form $f = aX^2 +bXY+cY^2 \in \Z[X,Y]$ with discriminant $\Delta = b^2-4ac$, we denote by $\LK a,b,c\RK \in \mf F_\Delta$ its equivalence class. The principal class $O=\LK 1,s,-D \RK\in \mf F_\Delta$ is the zero of $\mf F_\Delta$, and if $F = \LK a,b,c \RK\in \mf F_\Delta$, then $-F = \LK a, -b,c\RK$.

We say that a binary quadratic form $f\in \Z[X,Y]$ [properly] represents an integer $n$ if there exist $x,\,y \in \Z$ such that $n = f(x,y)$ \,[and $\gcd (x,y)=1$]. We say that a class $F \in \mf F_\Delta$ [properly] represents $n$ if some (and then every) form $f \in F$ [properly] represents $n$. Obviously, $F$ represents $n$ if and only if $n = c^2n_0$ for some $c, \,n_0 \in \N$ such that $F$ properly represents $n_0$.

We denote by $R_m(\Delta)$ the set of all $n \in \N_m$ which are represented by a class $F \in \mf F_\Delta$ and by $R_m^\circ (\Delta)$ the set of all $n \in R_m(\Delta)$ which are represented by the principal class $O \in \mf F_\Delta$. The below-mentioned Theorem \ref{3.4} is the link between ideal theory of $\mc O_\Delta$ and the representation by binary quadratic forms. Before however (and also for later use) we need the subsequent preparations \,{\bf A}, \,{\bf B}\, and Lemma \ref{3.3}.

\medskip

{\bf A.} \,By \cite[Theorem 6.4.2]{HK13a},  there is an isomorphism $\Phi_\Delta \colon \mf F_\Delta \to \mc C^+(\mc O_\Delta)$ such that
\[
\Phi_\Delta(F) = \Bigl[\Bigl(a, \frac{b + \sqrt \Delta}2\Bigr)\Bigr]^+\in \mc C^+(\mc O_\Delta)\quad \text{for every class } \ F = \LK a,b,c\RK \in \mf F_\Delta \ \text{ with } \ a>0.
\]
In particular, $\Phi_\Delta(O)=\bs 0$, and if $F= \LK a,b,c \RK$, then
\[
\Phi_\Delta (-F) = \Bigl[\Bigl(a, \frac{-b+\sqrt \Delta}2\bigr)\Bigr]^+= \Bigl[\Bigl(a, \frac{b-\sqrt \Delta}2\bigr)\Bigr]^+ = \tau \Phi_\Delta(F).
\]

\smallskip

{\bf B.}\, An ideal $\mf a$ of $\mc O_\Delta$ is called \,{\it primitive}\, if $c^{-1} \mf a \not \subset \mc O_\Delta$ for every integer $c \ge 2$, and it is called \,{\it regular}\, if it is primitive and invertible. Consequently, an ideal $\mf a$ of $\mc O_\Delta$ is invertible if and only if $\mf a = e \mf a_0$ for some $e \in \N$ and some regular ideal $\mf a_0$. The following Lemma \,\ref{3.3}\, is a slight refinement of \cite[Theorem 6.4.13]{HK13a}.

\smallskip
\begin{lemma} \label{3.3}
Let $F \in \mf F_\Delta$ and $n \in \N$. Then $n$ is $($properly$)$  represented by $F$ if and only if $n = \mf N(\mf a)$ for some $($regular$)$ ideal $\mf a \in \Phi_\Delta(F)$.
\end{lemma}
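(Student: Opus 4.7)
The plan is to reduce the lemma to its ``proper / regular'' specialization, \cite[Theorem 6.4.13]{HK13a}: $n$ is \emph{properly} represented by $F$ if and only if $n = \mf N(\mf a_0)$ for some \emph{regular} integral ideal $\mf a_0 \in \Phi_\Delta(F)$. Granted that as a black box, the full equivalence follows by matching the standard decomposition of representations, $n = c^2 n_0$ with $c = \gcd(x,y)$ and $n_0$ properly represented, with the decomposition of invertible integral ideals supplied in preparation \textbf{B}, $\mf a = e \mf a_0$ with $e \in \N$ and $\mf a_0$ regular.

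For the forward direction, let $n = f(x,y)$ with $f \in F$ and $x, y \in \Z$. Since $n \ge 1$, the pair $(x,y)$ is nonzero, so $c := \gcd(x,y) \in \N$ is well-defined, and writing $x = c x_0$, $y = c y_0$ with $\gcd(x_0, y_0) = 1$ gives $n = c^2 n_0$ where $n_0 := f(x_0, y_0)$ is properly represented by $F$. The proper case yields a regular integral ideal $\mf a_0 \in \Phi_\Delta(F)$ with $\mf N(\mf a_0) = n_0$. Set $\mf a = c \mf a_0$. Since $c \in \N$ is totally positive, $c \mc O_\Delta \in \ms H^+(\mc O_\Delta)$, hence $[\mf a]^+ = [\mf a_0]^+ = \Phi_\Delta(F)$; and multiplicativity of $\mf N$ on invertible integral ideals together with $\mf N(c \mc O_\Delta) = |\mc O_\Delta / c\mc O_\Delta| = c^2$ gives $\mf N(\mf a) = c^2 n_0 = n$.

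Conversely, let $\mf a \in \Phi_\Delta(F)$ be an invertible integral ideal with $\mf N(\mf a) = n$. Preparation \textbf{B} furnishes $\mf a = e \mf a_0$ with $e \in \N$ and $\mf a_0$ regular; by the same narrow-class argument $[\mf a_0]^+ = [\mf a]^+ = \Phi_\Delta(F)$, and $\mf N(\mf a_0) = n/e^2$. The proper case then shows $n/e^2$ is properly represented by $F$, say $n/e^2 = f(x_0, y_0)$ with $\gcd(x_0, y_0) = 1$ and $f \in F$; substituting $(e x_0, e y_0)$ exhibits $n = f(e x_0, e y_0)$ as a representation of $n$ by $F$.

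The only genuine input is the proper / regular case from \cite{HK13a}, invoked as a black box; the rest is bookkeeping. The one point that deserves explicit verification is that multiplication by a positive rational integer $c$ preserves the narrow ideal class while scaling the norm by $c^2$, which is precisely what allows the two factor-extraction decompositions to be transported back and forth across the isomorphism $\Phi_\Delta$.
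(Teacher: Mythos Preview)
Your proof is correct and follows essentially the same approach as the paper: both invoke \cite[Theorem 6.4.13]{HK13a} as a black box for the proper/regular case and then reduce the general statement to it via the decompositions $n = c^2 n_0$ (with $n_0$ properly represented) and $\mf a = e\mf a_0$ (with $\mf a_0$ regular) from preparation~\textbf{B}. Your version is slightly more explicit about why multiplication by a positive integer preserves the narrow class and scales the norm by a square, but the argument is the same.
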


\begin{proof} The assertion concerning proper representation is just \cite[Theorem 6.4.13]{HK13a}.

Let now $n$ be represented by the class $F$. Then $n = n_0d^2$, where $n_0,\,d \in \N$, and $n_0$ is properly represented by $F$. Hence $n_0 = \mf N(\mf a_0)$ for some regular ideal $\mf a_0 \in \Phi_\Delta(F)$. Then $n = \mf N(d\mf a_0)$, and $d\mf a_0 \in [d\mf a_0]^+ = [\mf a_0]^+ = \Phi_\Delta(F)$.

As to the converse, let $n = \mf N(\mf a)$ for some ideal $\mf a \in \Phi_\Delta(F)$. Then $\mf a$ is invertible, hence $\mf a = d \mf a_0$, where $d \in \N$ and $\mf a_0$ is a regular ideal. Hence $n_0 = \mf N(\mf a_0)$ is properly represented by $F = \Phi_\Delta^{-1}([\mf a_0]^+) = \Phi_\Delta^{-1}([\mf a]^+)$, \ $n = d^2n_0$ is represented by $F$ and $\mf a \in \Phi_\Delta(F)$.
\end{proof}

\medskip

In the sequel, we adopt all the notation introduced at the beginning of this section.

\smallskip
\begin{theorem} \label{3.4}
$R_m(\Delta) = N_m(K)$\, and\, $R_m^\circ(\Delta) = N_m^+(\mc O_\Delta)$. In particular, $R_m(\Delta)$ is the free abelian monoid with basis $\{p^{f_p} \colon p \in \P_m\}$, and $R_m^\circ(\Delta)$ is a submonoid.
\end{theorem}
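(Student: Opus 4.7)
The plan is to reduce both equalities to Lemma~\ref{3.3}, which translates representations of $n$ by a class $F \in \mf F_\Delta$ into norms of invertible ideals in $\Phi_\Delta(F)$, combined with Lemma~\ref{3.1}, which identifies $\ms I_m(\mc O_\Delta)$ with $\ms I_m(\mc O_K)$ via $\mf a \mapsto \mf a\mc O_K$ and $\mf A \mapsto \mf A \cap \mc O_\Delta$ while preserving norms. The crucial auxiliary step is the following coprimality statement: \emph{for any invertible ideal $\mf a \subset \mc O_\Delta$, one has $\gcd(\mf N(\mf a), m) = 1$ if and only if $\mf a \in \ms I_m(\mc O_\Delta)$.} The ``if'' direction is immediate from the product formula in Lemma~\ref{3.1}.2, since every $\mf p \in \ms P_m(\mc O_\Delta)$ has norm $p^{f_p}$ with $p \nmid m$. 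For ``only if'', if $\mf a$ were contained in a maximal ideal $\mf q$ of $\mc O_\Delta$ with $\mf q \cap \Z = p\Z$ and $p \mid m$, then the residue field $\mc O_\Delta/\mf q$ of characteristic $p$ would have order dividing $\mf N(\mf a)$, a contradiction; hence $\mf a + m\mc O_\Delta = \mc O_\Delta$, and since $m\mc O_\Delta \subset m\mc O_K \subset \mc O_\Delta$ this upgrades to $\mf a + m\mc O_K = \mc O_\Delta$, i.e., $\mf a \in \ms I_m(\mc O_\Delta)$.

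With this in hand, $R_m(\Delta) = N_m(K)$ follows quickly. Given $n \in R_m(\Delta)$, Lemma~\ref{3.3} supplies an invertible $\mf a \subset \mc O_\Delta$ with $\mf N(\mf a) = n$; the auxiliary step places $\mf a$ in $\ms I_m(\mc O_\Delta)$, and Lemma~\ref{3.1}.1 then yields $\mf a\mc O_K \in \ms I_m(\mc O_K)$ of the same norm $n$, whence $n \in N_m(K)$. Conversely, given $n = \mf N(\mf A) \in N_m(K)$ with $\mf A \in \ms I_m(\mc O_K)$, the ideal $\mf a = \mf A \cap \mc O_\Delta$ lies in $\ms I_m(\mc O_\Delta)$, is invertible, and satisfies $\mf N(\mf a) = n$; applying Lemma~\ref{3.3} to $F = \Phi_\Delta^{-1}([\mf a]^+) \in \mf F_\Delta$ exhibits $n$ as represented by $F$, so $n \in R_m(\Delta)$. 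The same two-way argument specialized to the principal class $F = O$, using $\Phi_\Delta(O) = \bs 0 = [\mc O_\Delta]^+$, identifies $R_m^\circ(\Delta)$ with the set of $n = \mf N(\alpha\mc O_\Delta)$ for $\alpha \in \mc O_\Delta \cap K^+ = \mc O_\Delta^+$ and $\alpha\mc O_\Delta \in \ms I_m(\mc O_\Delta)$, which is precisely $N_m^+(\mc O_\Delta) = \mf N(\ms H_m^+(\mc O_\Delta))$. The ``In particular'' clause is then immediate: $N_m(K)$ was already shown, in the paragraph preceding the theorem, to be the free abelian monoid with basis $\{p^{f_p} \colon p \in \P_m\}$, and $N_m^+(\mc O_\Delta)$ is closed under multiplication because the norm is multiplicative on principal ideals, so $R_m^\circ(\Delta)$ is a submonoid of $R_m(\Delta)$.

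The main obstacle is the auxiliary coprimality statement, since Lemma~\ref{3.3} supplies an invertible ideal with the right norm but no built-in guarantee that it avoids the conductor; the coprimality of $n$ to $m$ (encoded in $n \in \N_m$) must be leveraged to force the ideal into $\ms I_m(\mc O_\Delta)$, after which Lemma~\ref{3.1} shuttles between the two orders. Once that hurdle is cleared, both equalities are formal corollaries of Lemmas~\ref{3.1} and \ref{3.3}.
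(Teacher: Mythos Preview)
Your proof is correct and follows essentially the same route as the paper's: both directions rest on Lemma~\ref{3.3} to pass between representations by a class $F$ and norms of invertible ideals in $\Phi_\Delta(F)$, together with the identification $N_m(K) = \mf N(\ms I_m(\mc O_\Delta))$ coming from Lemma~\ref{3.1}. The one place where you are more careful than the paper is the auxiliary coprimality step --- that an invertible ideal $\mf a$ of $\mc O_\Delta$ with $\gcd(\mf N(\mf a),m)=1$ automatically lies in $\ms I_m(\mc O_\Delta)$ --- which the paper simply asserts when it writes ``$\mf a \in \Phi_\Delta(F) \cap \ms I_m(\mc O_\Delta)$'' in its first sentence; your residue-field argument fills this in correctly.
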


\begin{proof}
If $n \in R_m(\Delta)$ is represented by the class $F \in \mf F_\Delta$, then \,Lemma \ref{3.3}\, shows that $n = \mf N(\mf a)$ for some ideal $\mf a \in \Phi_\Delta(F) \cap \ms I_m(\mc O_\Delta)$, and consequently $n \in N_m(K)$. If $n \in R_m^\circ(\Delta)$, then $F=O$, hence $\mf a\in \Phi_\Delta(O) = \bs 0$, and thus $\mf  a \in \ms H^+_m(\mc O_\Delta)$,   i.~e., $n \in N_m^+(\mc O_\Delta)$.

As to the reverse inclusion, let $n = \mf N(\mf a)\in N_m(K)$ for some $\mf a \in \ms I_m(\mc O_\Delta)$. By Lemma \ref{3.3}, $n$ is represented by $F = \Phi_\Delta^{-1}([\mf a]^+)$, i.~e., $n \in R_m(\Delta)$. If $n \in N_m^+(\mc O_\Delta)$, then $\mf a \in \ms H^+_m(\mc O_\Delta)$, hence $[\mf a]^+ = \bs 0$ and $F=O$, i.~e., $n \in R_m^\circ(\Delta)$.
\end{proof}

\smallskip

For every prime $p \in \P_m$, we fix a prime ideal $\mf p_p \in \ms P_m(\mc O_\Delta)$ lying above $p$ such that
\[
\mc C^+(\mc O_\Delta) = \{[\mf p_p]^+ \colon p \in \P_m, \ f_p=1\}.
\]
As usual, $\bigl(\frac \Delta p \bigr)$ denotes the Kronecker symbol. By \cite[Theorem 5.8.8]{HK13a} we obtain:

\smallskip

\begin{itemize}
\item
If $\bigl(\frac \Delta p \bigr) = 1$, then $f_p=1$, \ $\tau \mf p_p \ne \mf p_p$, \ $p\mc O_\Delta = \mf p_p\tau(\mf p_p)$, \  $\tau([\mf p_p]^+) = [\tau \mf p_p]^+ = -[\mf p_p]^+$ in $\mc C^+(\mc O_\Delta)$.

\smallskip

\item
If $\bigl(\frac \Delta p \bigr) = -1$, then $f_p=2$, \ $\mf p_p=p\mc O_\Delta$ and $[\mf  p_p]^+ = \bs 0 \in \mc C^+(\mc O_\Delta)$.

\smallskip

\item
If $p\t \Delta$, then $f_p=1$, \ $\tau \mf p_p=\mf p_p$, and $p \mc O_\Delta = \mf p_p^2$, and $2[\mf p_p]^+ = \bs 0\in \mc C^+(\mc O_\Delta)$.
\end{itemize}
Since apparently, $\{\mf p_p, \tau \mf p_p \} = \{\mf a \in \ms I_m(\mc O_\Delta) \colon \mf N(\mf a) = p^{f_p} \}$, it follows that the classes $F_p = \Phi_\Delta^{-1}([\mf p_p]^+)$  and $-F_p = \Phi_\Delta^{-1}(\tau[\mf p_p]^+)$ are the only classes in $\mf F_\Delta$ representing $p^{f_p}$. Moreover, $F_p=O$ if $\bigl(\frac\Delta p)=-1$, and $F_p \ne -F_p$ if and only if $\bigl(\frac \Delta p\bigr)=1$.

\medskip

Now we are ready for the main result of this section. We set $\P_m'= \{p \in \P_m \colon f_p=1\}$, and we define
\[
\vartheta \colon R_m(\Delta) \,\to\, \mc F(\mf F_\Delta) \quad \text{by}\quad \vartheta (n) = \prod_{p \in \P_m} F_p^{\mathsf v_p(n)/f_p}.
\]
Since $\mc C^+(\mc O_\Delta) = \{[\mf p_p]^+ \colon p \in \P_m'\}$, we obtain $\mf F_\Delta = \{F_p \colon p \in \P_m'\}$, and therefore $\vartheta$ is surjective.

\medskip
\begin{theorem}~ \label{3.5}

\begin{enumerate}
\item
Let $n \in R_m(\Delta)$. Then \,$n \in R_m^\circ(\Delta)\, \iff\, \vartheta(n) \in \mc B_{\pm}(\mf F_{\Delta})$.

\smallskip

\item
$\vartheta_0 = \vartheta \rest R_m^\circ(\Delta) \colon R_m^\circ(\Delta) \to \mc B_{\pm}(\mf F_\Delta)$ is a transfer homomorphism satisfying $\mathsf c(R_m^\circ(\Delta), \vartheta_0)\le 2$.
\end{enumerate}
\end{theorem}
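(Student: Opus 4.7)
The plan is to reduce Theorem \ref{3.5} to Theorem \ref{3.2} by exploiting the class-group isomorphism $\Phi_\Delta$ of preparation \textbf{A}. The first step, and the only one requiring honest work, is to observe that in our quadratic setting $\tau$ acts as $-\id$ on the narrow class group $\mc C^+(\mc O_\Delta)$: preparation \textbf{A} gives $\Phi_\Delta(-F) = \tau\Phi_\Delta(F)$ for every $F \in \mf F_\Delta$, and since $\Phi_\Delta$ is surjective this amounts to $\tau = -\id$ on $\mc C^+(\mc O_\Delta)$. Consequently, with $\Gamma = \{\id_K,\tau\}$, we have
\[
\mc B_\Gamma\bigl(\mc C^+(\mc O_\Delta)\bigr) = \mc B_{\pm}\bigl(\mc C^+(\mc O_\Delta)\bigr).
\]

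Next I would extend $\Phi_\Delta$ uniquely to a monoid isomorphism $\overline{\Phi_\Delta}\colon \mc F(\mf F_\Delta) \to \mc F\bigl(\mc C^+(\mc O_\Delta)\bigr)$; since $\overline{\Phi_\Delta}$ intertwines the $\pm\id$-actions on both sides, it restricts to an isomorphism $\mc B_{\pm}(\mf F_\Delta) \to \mc B_{\pm}\bigl(\mc C^+(\mc O_\Delta)\bigr)$. Moreover, by the very definition $F_p = \Phi_\Delta^{-1}([\mf p_p]^+)$ and the explicit formulas for $\vartheta$ and $\Theta$, the two homomorphisms agree on the free generators $p^{f_p}$ of $N_m(K) = R_m(\Delta)$ up to $\overline{\Phi_\Delta}$, so the diagram $\overline{\Phi_\Delta}\circ\vartheta = \Theta$ commutes on all of $R_m(\Delta)$.

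Assertion 1 then follows immediately: by Theorem \ref{3.4} we have $R_m(\Delta) = N_m(K)$ and $R_m^\circ(\Delta) = N_m^+(\mc O_\Delta)$, and Theorem \ref{3.2}.1 yields
\[
n \in R_m^\circ(\Delta) \iff \Theta(n) \in \mc B_\Gamma\bigl(\mc C^+(\mc O_\Delta)\bigr) \iff \vartheta(n) \in \mc B_{\pm}(\mf F_\Delta).
\]
For assertion 2, the restriction $\vartheta_0$ factors as $\overline{\Phi_\Delta}^{-1}\circ\theta$, where $\theta$ is the transfer homomorphism of Theorem \ref{3.2}.2; since composition with a monoid isomorphism preserves both the transfer-homomorphism property and the catenary degree in the fibres, the conclusions of Theorem \ref{3.2}.2 descend directly to $\vartheta_0$, giving $\mathsf c(R_m^\circ(\Delta), \vartheta_0) \le 2$. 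No further difficulty is anticipated, since everything beyond the identification $\tau = -\id$ is pure transport of structure along $\Phi_\Delta$.
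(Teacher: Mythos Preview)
Your proposal is correct and follows essentially the same route as the paper: extend $\Phi_\Delta$ to an isomorphism $\overline{\Phi_\Delta}$ of the free monoids, check that $\overline{\Phi_\Delta}\circ\vartheta=\Theta$, use $\Phi_\Delta(-F)=\tau\Phi_\Delta(F)$ to identify $\mc B_{\pm}(\mf F_\Delta)$ with $\mc B_\Gamma(\mc C^+(\mc O_\Delta))$, and then invoke Theorems~\ref{3.2} and~\ref{3.4}. The only cosmetic difference is that the paper phrases the key identification directly as $\overline{\Phi_\Delta}(\mc B_\pm(\mf F_\Delta))=\mc B_\Gamma(\mc C^+(\mc O_\Delta))$ rather than via the intermediate observation that $\tau$ acts as $-\id$ on the class group.
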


\begin{proof}
Let $\overline \Phi_\Delta \colon \mc F(\mf F_\Delta) \to \mc F(C^+(\mc O_\Delta))$ be the unique isomorphism satisfying $\overline \Phi_\Delta \rest \mf F_\Delta = \Phi_\Delta$. Since $\Phi_\Delta (F_p) = [\mf p_p]^+$, we obtain the commutative diagram
\[
\begin{CD}
\vartheta\colon R_m(\Delta) @>>> \mc F(\mf F_\Delta)\\
@| @VV\overline\Phi_\Delta V\\
\Theta \colon N_m(K) @>>> \mc F(\mc C^+(\mc O_\Delta))
\end{CD}
\]
Since $\Phi_\Delta(-F) = \tau \Phi_\Delta(F)$  for all $F \in \mf F_\Delta$, it follows that $\overline \Phi_\Delta(\mc B_\pm(\mf F_\Delta)) = \mc B_\Gamma(\mc C^+(\mc O_\Delta))$. By  Theorems \ref{3.2} and \ref{3.4}, we obtain for $n \in R_m(\Delta) =N_m(K)$:
\[
n \in R_m^\circ(\Delta) \,\iff\, n \in N_m^+(\mc O_\Delta) \,\iff\, \Theta(n) \in \mc B_\Gamma(\mc C^+(\mc O_\Delta)) \,\iff\, \vartheta(n)\in \mc B_\pm(\mf F_\Delta) \,.
\]
The above diagram induces the diagram
\[
\begin{CD}
\vartheta_0 \colon R_m^\circ(\Delta) @>>> \mc B_\pm(\mf F_\Delta)\\
@| @VV\overline\Phi_\Delta\rest \mc B_\pm(\mf F_\Delta) V\\
\theta \colon N_m^+(\mc O_\Delta) @>>> \mc \mc B_\Gamma(\mc C^+(\mc O_\Delta)),
\end{CD}
\]
and as $\theta$ is a transfer homomorphism satisfyin $\mathsf c(N_m^+(\mc O_\Delta), \theta)$, the same is true for $\vartheta_0$.
\end{proof}

\medskip

Although in Theorem \ref{3.5} we (formally) not excluded primes $p\in\P_m\setminus \P_m'$, they really play no role for the theory. Indeed, if $n \in \N_m$ and $p \in \P_m\setminus \P_m'$, then $np^2 \in R_m(\Delta)$ if and only if $n \in R_m(\Delta)$, and $np^2 \in R_m^\circ(\Delta)$ if and only if $n \in R_m^\circ(\Delta)$.

We finally show how to modify the theory to disregard primes not in $\P_m'$. For this, we set
\[
N_m'= \mc F(\P_m'), \ \ R_m' = R_m \cap \N_m' \ \text{ and } \ R_m^{\prime \circ} = R_m^\circ \cap N_m',
\]
and we define
\[
\vartheta' \colon R_m'(\Delta) \,\to\, \mc F(\mf F_\Delta) \quad \text{by}\quad \vartheta (n) = \prod_{p \in \P_m'} F_p^{\mathsf v_p(n)}.
\]
Since $F_p=O$ for all $p\in \P_m \setminus \P_m'$, the following Corollary \ref{3.6} follows immediately from Theorem \ref{3.5}.

\smallskip
\begin{corollary}~ \label{3.6}

\begin{enumerate}
\item
Let $n \in R_m'(\Delta)$. Then \,$n \in R_m^{\prime \circ}(\Delta)\, \iff\, \vartheta'(n) \in \mc B_{\pm}(\mf F_{\Delta})$.

\smallskip

\item
$\vartheta_0' = \vartheta' \rest R_m^{\prime \circ}(\Delta) \colon R_m^{\prime \circ}(\Delta) \to \mc B_{\pm}(\mf F_{\Delta})$ is a transfer homomorphism satisfyint $\mathsf c(R_m^{\prime \circ}(\Delta), \vartheta_0)\le 2$.
\end{enumerate}
\end{corollary}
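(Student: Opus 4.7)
\quad The plan is to derive Corollary \ref{3.6} as a direct consequence of Theorem \ref{3.5}, exploiting the fact that for every prime $p \in \P_m \setminus \P_m'$ one has $\bigl(\tfrac{\Delta}{p}\bigr)=-1$, $f_p=2$, and $F_p=O$ (as recorded in the case list preceding Theorem \ref{3.5}). Such primes therefore contribute only factors $O^k$ in $\mc F(\mf F_\Delta)$, which are invisible to the $\pm$-weighted zero-sum condition.

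First I would verify that $R_m'(\Delta)=\mc F(\P_m')$: the inclusion $R_m'(\Delta)\subset N_m'=\mc F(\P_m')$ is by definition, and conversely for $p\in\P_m'$ one has $p=p^{f_p}\in R_m(\Delta)$, so $N_m'\subset R_m(\Delta)\cap N_m'=R_m'(\Delta)$. Then for $n\in R_m'(\Delta)$ the vanishing $\mathsf v_p(n)=0$ on $\P_m\setminus\P_m'$ immediately yields
\[
\vartheta(n)=\prod_{p\in\P_m}F_p^{\mathsf v_p(n)/f_p}=\prod_{p\in\P_m'}F_p^{\mathsf v_p(n)}=\vartheta'(n) \,.
\]
Combined with the tautological equality $R_m^{\prime\circ}(\Delta)=R_m^\circ(\Delta)\cap R_m'(\Delta)$, this reduces part (1) of the corollary verbatim to part (1) of Theorem \ref{3.5}.

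For part (2), the cleanest route is a fresh application of Proposition \ref{2.3} with $P=\P_m'$, with the surjection $[\,\bdot\,]\colon\P_m'\to\mf F_\Delta$ defined by $[p]=F_p$ (surjectivity is guaranteed by the equality $\mf F_\Delta=\{F_p\colon p\in\P_m'\}$ already recorded in the text), $\mc F(P)=R_m'(\Delta)$, $B=\mc B_\pm(\mf F_\Delta)$, and $H=\vartheta'^{-1}(B)$; by part (1) just proved, $H=R_m^{\prime\circ}(\Delta)$, so Proposition \ref{2.3} yields at once both that $\vartheta_0'$ is a transfer homomorphism and the bound $\mathsf c(R_m^{\prime\circ}(\Delta),\vartheta_0')\le 2$. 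There is no serious obstacle — the whole argument is bookkeeping — and the only points that deserve a careful check are the identity $R_m'(\Delta)=\mc F(\P_m')$ and the invisibility of the inert primes to $\vartheta$, both of which are immediate from the case analysis preceding Theorem \ref{3.5}.
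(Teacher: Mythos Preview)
Your proposal is correct and follows essentially the same approach as the paper: both rest on the single observation that $F_p=O$ for every $p\in\P_m\setminus\P_m'$, so that the inert primes are invisible in $\mc F(\mf F_\Delta)$ and the restricted statement collapses to Theorem~\ref{3.5}. The paper records this as a one-line remark, whereas you spell out the identities $R_m'(\Delta)=\mc F(\P_m')$ and $\vartheta\rest R_m'(\Delta)=\vartheta'$ and then re-invoke Proposition~\ref{2.3} directly for part~(2) instead of formally passing through Theorem~\ref{3.5}(2); this is a cosmetic difference only, and your direct appeal to Proposition~\ref{2.3} is arguably the cleaner route.
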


\medskip
\section{Weakly Krull and transfer Krull monoids} \label{4}
\medskip

Weakly Krull domains and weakly Krull monoids were introduced by Anderson, Mott, and Zafrullah (\cite{An-Mo-Za92}) and by Halter-Koch (\cite{HK95a}). Weakly Krull domains generalize one-dimensional noetherian domains, and a domain  is weakly Krull if and only if its monoid of nonzero elements is weakly Krull. This characterization  generalizes from domains to rings with zero-divisors (\cite[Theorem 4.4]{Ch-Oh22a}). If a domain $R$ is weakly Krull Mori, then its monoid of invertible ideals is weakly Krull Mori (\cite[Theorem 4.3]{Ge-Kh22b}). A  monoid algebra $R = D[H]$, where $D$ is a domain of characteristic zero and $H$ is a torsion-free monoid,  is weakly Krull  if and only if $D$ is a weakly Krull UMT-domain, $H$ is a weakly Krull UMT-monoid, and $\mathsf q (H)$ satisfies the ascending chain condition on cyclic subgroups (\cite[Corollary 13]{Ch-Fa-Wi22a}). Weakly factorial monoids, primary monoids (whence numerical monoids), and Krull monoids are weakly Krull monoids.  A monoid or a domain is transfer Krull if it allows a transfer homomorphism to a Krull monoid. Thus, Krull monoids are transfer Krull. For recent work on transfer Krull monoids and for a list of examples of monoids and domains that are not Krull but transfer Krull we refer to \cite{Ba-Re22a} and to the survey \cite{Ge-Zh20a}.

We recall the formal definitions of Krull, weakly Krull, and transfer Krull monoids. Each of these concepts allows a long list of equivalent conditions, and we choose the one that is most useful for us in the sequel. A monoid $H$ is said to be
\begin{itemize}
\item a\, {\it weakly Krull monoid}\, (see \cite[Corollary 22.5]{HK98}) if
      \[
      H = \bigcap_{\mathfrak p \in \mathfrak X (H)} H_{\mathfrak p} \quad \text{and} \quad \{ \mathfrak p \in \mathfrak X (H) \colon a \in \mathfrak p\} \ \text{is finite for all} \ a \in H \,;
      \]

\item a \,{\it Krull monoid}\, if $H$ is weakly Krull and $H_{\mathfrak p}$ is a discrete valuation monoid for all $\mathfrak p \in \mathfrak X (H)$ (equivalently, if there is a divisor homomorphism $\varphi \colon H \to D$, where $D$ is a free abelian monoid);

\item a {\it transfer Krull monoid} if there is a transfer homomorphism $\theta \colon H \to B$, where $B$ is a Krull monoid.
\end{itemize}
If $G$ is a finite abelian group and $G_0 \subset G$ a subset, then the inclusion $\mathcal B (G_0) \hookrightarrow \mathcal F (G_0)$ is a divisor homomorphism, and therefore $\mathcal B (G_0)$ is a Krull monoid.

If $H$ is a monoid and $S \subset H$ a submonoid, then $S$ is called {\it divisor-closed} if $a \in H$, $b \in S$, and $a \t b$ in $H$ implies that $a \in S$.

\smallskip
\begin{proposition}\label{4.1}
Let $G$ be a finite abelian group, and let $G_0\subset G$  and  $\Gamma\subset \End(G)$ be  nonempty subsets.

\smallskip

\begin{enumerate}
\item
$\{\mc B_\Gamma(G_1) \colon G_1 \subset G_0\}$ is the set of all divisor-closed submonoids of \,$\mc B_\Gamma(G_0)$.

\smallskip

\item
For $g \in G_0$ and for a subset $X \subset G_0$, let $\mf p_g = g\mc F(G_0) \cap \mc B_\Gamma(G_0)$  and
\[
\mf p_X = \bigcup_{g \in X} \mf p_g = \{ S \in \mc B_\Gamma(G_0)\colon \mathsf v_g(S) \ge 1 \ \text{ for some } \ g \in X\}.
\]
Then $s\text{-}\spec \big(\mathcal B_{\Gamma}(G_0) \big) = \{\mathfrak p_X \colon  X\subset G_0\}$, and $\mathfrak X(\mathcal B_{\Gamma}(G_0))= \{\mathfrak p_g\colon g\in G_0\}$.
\end{enumerate}
\end{proposition}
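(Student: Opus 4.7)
The plan is to deduce Part 2 formally from Part 1, concentrating the real work on Part 1. The forward direction of Part 1 is immediate from the identity $\mc B_\Gamma(G_1) = \mc B_\Gamma(G_0) \cap \mc F(G_1)$: if $T$ divides $S \in \mc B_\Gamma(G_1)$ in $\mc B_\Gamma(G_0)$, then $\supp(T) \subset \supp(S) \subset G_1$, so $T \in \mc B_\Gamma(G_0) \cap \mc F(G_1) = \mc B_\Gamma(G_1)$.

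For the converse of Part 1, let $H$ be a divisor-closed submonoid of $\mc B_\Gamma(G_0)$ and set $G_1 = \bigcup_{S \in H} \supp(S)$. The inclusion $H \subset \mc B_\Gamma(G_1)$ is clear. For $\mc B_\Gamma(G_1) \subset H$, since both monoids are atomic it suffices to show that each atom $T = g_1 \cdots g_\ell$ of $\mc B_\Gamma(G_1)$ lies in $H$; by divisor-closedness, this reduces to exhibiting $R \in H$ with $T \mid R$ in $\mc B_\Gamma(G_0)$. For each $i \in [1, \ell]$, pick an atom $S_i \in \mc A(H)$ with $g_i \in \supp(S_i)$ (available since $g_i \in G_1$), write $S_i = g_i S_i'$, and form $R = \prod_i S_i \in H$. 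Then $T$ divides $R$ in $\mc F(G_0)$ with quotient $R/T = \prod_i S_i'$. Fixing weights $(\gamma_1, \ldots, \gamma_\ell) \in \Gamma^\ell$ realizing $T$ as a $\Gamma$-weighted zero-sum (so $\sum_i \gamma_i(g_i) = 0$), I select valid weightings of each $S_i$ placing $\gamma_i$ at a distinguished $g_i$-position; then $\prod_i S_i'$ inherits weights totalling $-\sum_i \gamma_i(g_i) = 0$, witnessing $R/T \in \mc B_\Gamma(G_0)$ and hence $T \mid R$ in $\mc B_\Gamma(G_0)$.

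The main technical obstacle is securing this weighting compatibility: for each $i$ one needs $-\gamma_i(g_i) \in \sigma_\Gamma(S_i')$. If an initial choice of $S_i$ fails, one exploits that divisor-closedness forces $H$ to be rich in atoms (any atom of $\mc B_\Gamma(G_0)$ dividing an element of $H$ lies in $H$) and swaps $S_i$ for another atom of $H$ whose $g_i$-positions admit the required weight; failing that, one passes to a power $R^N$ and uses the many factorizations of $R^N$ in $\mc B_\Gamma(G_0)$ to find one of the form $T \cdot R'$ with $R' \in \mc B_\Gamma(G_0)$. Since $\mc A(\mc B_\Gamma(G_0))$ is finite, only finitely many local patterns need to be examined, turning the question into a finite combinatorial check.

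Part 2 then follows at once. Each $\mf p_X$ is routinely a prime $s$-ideal, and testing against sequences $g^m$ (with $m$ chosen so that $m\gamma_0(g) = 0$ for some fixed $\gamma_0 \in \Gamma$) shows $\mf p_X \subset \mf p_Y \iff X \subset Y$, which identifies the minimal nonempty primes with the $\mf p_g$. Conversely, for any prime $s$-ideal $\mf p$, the complement $H = \mc B_\Gamma(G_0) \setminus \mf p$ is a submonoid by primality and divisor-closed by the ideal property, so Part 1 gives $H = \mc B_\Gamma(G_1)$ for some $G_1 \subset G_0$, and hence $\mf p = \mf p_{G_0 \setminus G_1}$.
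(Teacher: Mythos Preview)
Your argument for Part~2 from Part~1 is essentially the paper's, and is fine. The gap is in the converse direction of Part~1.

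Your weight-compatibility step does not go through as stated. Having chosen weights $(\gamma_1,\ldots,\gamma_\ell)$ for $T$ and atoms $S_i\in H$ with $g_i\in\supp(S_i)$, you would need a $\Gamma$-weighting of $S_i$ that places exactly $\gamma_i$ at the distinguished copy of $g_i$; equivalently, $-\gamma_i(g_i)\in\sigma_\Gamma(S_i')$. Nothing guarantees this: the set $\sigma_\Gamma(S_i')$ is typically a proper subset of $G$, and swapping $S_i$ for another atom of $H$ containing $g_i$ need not help, since there may be only finitely many such atoms and none with the required property. Your fallback of ``passing to a power $R^N$'' and invoking a ``finite combinatorial check'' is not a proof: you neither name the power nor explain why the desired factorization $R^N = T\cdot R'$ with $R'\in\mc B_\Gamma(G_0)$ exists.

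The missing idea, which the paper supplies, is that the weight problem disappears if one forces genuine zero-sums. For any sequence $U$ with $\sigma(U)=0$ and any $\gamma\in\Gamma$, weighting every term of $U$ by $\gamma$ gives $\gamma(\sigma(U))=0$, so $U\in\mc B_\Gamma(G_0)$. Now take $T_1,\ldots,T_\ell\in H$ with $g_i\in\supp(T_i)$ and set $R=(T_1\cdots T_\ell)^{\exp(G)}\in H$. Then $\sigma(R)=0$; moreover $S^{\exp(G)}\mid R$ in $\mc F(G_0)$ (where $S=g_1\cdots g_\ell$), and $\sigma\bigl((S^{\exp(G)})^{-1}R\bigr)=0$ as well. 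Hence
\[
S^{-1}R \;=\; (S^{\exp(G)})^{-1}R \,\cdot\, S^{\exp(G)-1}\;\in\;\mc B_\Gamma(G_0),
\]
the first factor being an honest zero-sum sequence and the second a power of $S\in\mc B_\Gamma(G_1)$. Thus $S\mid R$ in $\mc B_\Gamma(G_0)$, and divisor-closedness of $H$ gives $S\in H$. This is precisely the step your proposal was groping toward with ``pass to $R^N$''; the specific choice $N=\exp(G)$ and the observation that zero-sum implies $\Gamma$-weighted zero-sum are what make it work.
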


\begin{proof}
1. If $G_1 \subset G_0$, then  $\mc B_\Gamma(G_1)= \mc B_\Gamma(G_0)\cap \mc F(G_1)$ is a divisor-closed submonoid of $\mc B_\Gamma(G_0)$. Conversely, let $H \subset \mc B_\Gamma(G_0)$ be a divisor-closed submonoid, and set
\[
G_1=\bigcup_{A \in H}\supp(A).
\]
Then, obviously, $H\subset \mathcal B_{\Gamma}(G_1)$, and it remains to verify the reverse inclusion. Let $S=g_1\cdot\ldots\cdot g_{\ell}\in \mathcal B_{\Gamma}(G_1)$, where $\ell\in \N$ and $g_1,\ldots,g_{\ell}\in G_1$. Then there exist $T_1,\ldots, T_{\ell}\in H$ such that $g_i \in \supp ( T_i)$ for all $i\in [1,\ell]$.
 Therefore $T:=(T_1\cdot\ldots\cdot T_{\ell})^{\exp(G)}\in H$ is zero-sum.
 Since $S^{\exp(G)-1}\in \mathcal B_{\Gamma}(G_1)$ and $(S^{\exp(G)})^{-1}T$ is zero-sum, we have 
 $S^{-1}T= (S^{\exp(G)})^{-1}T \cdot S^{\exp(G)-1}\in \mathcal B_{\Gamma}(G_1)$ 
 and  hence $S$ divides $T$  in $\mathcal B_{\Gamma}(G_1)$. It follows from the fact that $H$ is divisor-closed in  $\mathcal B_{\Gamma}(G_1)$ that $S\in H$.

\smallskip

2. If $X \subset G_0$, then clearly $\mf p_X \in s\text{-}\spec \big(\mc B_\Gamma(G_0) \big)$. Conversely, if $\mf p \in s\text{-}\spec(\mc B_\Gamma(G_0))$, then $\mc B_\Gamma(G_0) \setminus\mf p$ is a divisor-closed submonoid of $\mc B_\Gamma(G_0)$. Thus,   $\mc B_\Gamma(G_0) \setminus \mf p = \mc B_\Gamma(G_1)$ for a subset $G_1 \subset G_0$, and therefore $\mf p = \mc B_\Gamma(G_0) \setminus \mc B_\Gamma(G_1)  = \mf p_{G_0\setminus G_1}$. Since $\mf X(\mc B_\Gamma(G_0))$ is the set of all minimal nonempty elements of $s\text{-}\spec(\mc B_\Gamma(G_0))$, we get $\mf X(\mc B_\Gamma(G_0)) = \{\mf p_g \colon g \in G_0 \}$.
\end{proof}

\smallskip
\begin{lemma}\label{4.2}
Let $G$ be a finite abelian group, and let $G_0 \subset G$ and $\Gamma \subset \End(G)$ be nonempty subsets.

\begin{enumerate}
\item
If \,$0 \in \Gamma$, then $\mc B_\Gamma(G_0) = \mc F(G_0)$.

\smallskip

\item
$\Gamma \subset \Aut(G)$ \,if and only if\, $G \cap \mc B_\Gamma(G) = \{0\}$.

\smallskip

\item
Suppose that $\{\id_G,-\id_G\} \subset \Gamma$.

\smallskip

\begin{enumerate}
\item
If \,$S \in \mc F(G_0)$ and $n \in \N$ is even, then $S^n \in \mc B_\Gamma(G_0)$.

\smallskip

\item
$\mc B_\Gamma(G_0)' = \{ S \in \mc F(G_0) \colon S^m \in \mc B_\Gamma(G_0) \ \text{ for some odd } \ m \in \N\}$. In particular, if $g \in G$ and \,$\ord(g)$ is odd, then $g \in \mc B_\Gamma(G)'$.
\end{enumerate}
\end{enumerate}
\end{lemma}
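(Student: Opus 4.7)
I would handle the four claims in the order stated, as they are essentially independent. Part (1) is immediate: the choice $\gamma_1=\ldots=\gamma_\ell=0\in\Gamma$ in the defining identity $(\dagger)$ works for every $S=g_1\cdots g_\ell\in\mc F(G_0)$, giving $\mc F(G_0)\subset \mc B_\Gamma(G_0)$, and the reverse inclusion holds by definition. For (2), I use that $G$ is finite, so an endomorphism is an automorphism if and only if it is injective: if $\Gamma\subset\Aut(G)$ and $g\in G\cap\mc B_\Gamma(G)$, then some $\gamma\in\Gamma$ sends $g$ to $0$, forcing $g=0$; conversely, if some $\gamma\in\Gamma$ fails to be an automorphism, any nonzero $g\in\Ker\gamma$ exhibits a nonzero element of $G\cap\mc B_\Gamma(G)$.

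For (3)(a), I factor $S^n=S^{n/2}\cdot S^{n/2}$ when $n$ is even, apply $\id_G$ to the first half and $-\id_G$ to the second; the resulting weighted sum is $\sigma(S^{n/2})-\sigma(S^{n/2})=0$, so $S^n\in\mc B_\Gamma(G_0)$ via the decomposition $(\dagger)$ (with $S_{\id_G}=S_{-\id_G}=S^{n/2}$ and $S_\gamma=1$ for all other $\gamma$).

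Part (3)(b) is the main content. The general inclusion $\mc B_\Gamma(G_0)'\subset\widehat{\mc F(G_0)}=\mc F(G_0)$ recalled in Section~\ref{2} lets me restrict attention to $S\in\mc F(G_0)$. Given $S^m\in\mc B_\Gamma(G_0)$ for some odd $m$, I claim $S^k\in\mc B_\Gamma(G_0)$ for every $k\ge m$: if $k$ is even, this is (3)(a); if $k$ is odd then $k-m$ is even, so $S^k=S^m\cdot S^{k-m}$ is a product of elements of $\mc B_\Gamma(G_0)$. Membership in the quotient group follows from $S=S^m\cdot(S^{m-1})^{-1}$, where $S^{m-1}\in\mc B_\Gamma(G_0)$ by (3)(a) since $m-1$ is even, so indeed $S\in\mc B_\Gamma(G_0)'$. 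Conversely, the definition of seminormalization supplies some $N$ with $S^k\in\mc B_\Gamma(G_0)$ for all $k\ge N$, and any odd $k\ge N$ provides the required $m$. Finally, $\id_G\in\Gamma$ yields $\mc B(G)\subset\mc B_\Gamma(G)$, so for $g\in G$ with $\ord(g)$ odd, $g^{\ord(g)}\in\mc B(G)$ is the required odd power lying in $\mc B_\Gamma(G)$. The only delicacy is remembering to verify $S\in\mathsf q(\mc B_\Gamma(G_0))$ before checking the asymptotic divisibility; no deeper obstacle arises.
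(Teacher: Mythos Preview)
Your proof is correct and follows essentially the same approach as the paper's. The only differences are cosmetic: for part 3(b) the paper writes $n=\nu m+2\mu$ for $n\ge 2m$ to decompose $S^n$, whereas you split into the cases $k$ even and $k$ odd with $k\ge m$, and you are a bit more explicit than the paper about why $S\in\mathsf q(\mc B_\Gamma(G_0))$; neither amounts to a genuinely different route.
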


\begin{proof}
1. Obvious.

\smallskip

2. If $g \in G$, then $g \in \mc B_\Gamma (G)$ if and only if $g \in \Ker(\gamma)$ for some $\gamma \in \Gamma$. Hence
\[
G \cap \mc B_\Gamma(G) = \bigcup_{\gamma \in \Gamma} \Ker(\gamma),
\]
and the assertion follows.

\smallskip

3.\,(a)\, If $n = 2m$ for some $m \in \N$, then $S^n = S^mS^m$ and $\sigma(S^m)-\sigma(S^m) = 0$, whence $S^n \in \mc B_\Gamma(G)$.

\smallskip

(b)\, As $\mc B_\Gamma(G_0)' \subset \mc F(G_0)$, it follows that
\[
\mc B_\Gamma(G_0)' =  \{S \in \mc F(G_0) \colon S^n  \in \mc B_\Gamma(G_0) \ \text{ for all sufficiently large } \ n \in \N \}.
\]
Thus suppose that $S \in  \mc F(G_0)$ and $S^m \in\mc B_\Gamma(G_0)$ for some odd $m \in \N$. If $n \in \N$ and $n\ge 2m$, then $n = \nu m +2\mu$ for some $\nu,\,\mu \in \N_0$, and therefore $S^n = (S^m)^\nu S^{2\mu}\in \mc B_\Gamma(G_0)$. Hence $S \in \mc B_\Gamma(G_0)'$.
\end{proof}

\smallskip
\begin{proposition} \label{4.3}
Let $G$ be a finite abelian group, $G_0\subset G$ be a nonempty subset, and  $\Gamma\subset \Aut(G)$ be a subset such that $\{\id_G, -\id_G\}\subset \Gamma$.

\smallskip

\begin{enumerate}
\item
Suppose that there exists some $g\in G$ such that $\ord(g)=n \ge 3$ is odd and $\{g,-g\}\subset G_0$. Then $\mathcal B_{\Gamma}(G_0)$ is not weakly Krull.

\smallskip

\item
Suppose that there exists some $g\in G$ such that $\ord(g)=n\ge 4$ is even and $\{g,-g,\, 2g\}\subset G_0$. Then $\mathcal B_{\Gamma}(G_0)$ is not weakly Krull.

\smallskip
		
		\item $\mathcal B_{\Gamma}(G)$ is weakly Krull if and only if $G$ is an elementary $2$-group and $\Gamma=\{\id_G, -\id_G\}$.
	\end{enumerate}
 \end{proposition}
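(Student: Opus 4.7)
The plan is to exploit Proposition~\ref{4.1}(2), which identifies $\mathfrak X(\mathcal B_\Gamma(G_0)) = \{\mathfrak p_h : h \in G_0\}$; hence $H := \mathcal B_\Gamma(G_0)$ is weakly Krull if and only if $H = \bigcap_{h \in G_0} H_{\mathfrak p_h}$ in $\mathsf q(H)$. To refute this in parts (1), (2), and the hard direction of (3), I will produce an element $S \in \mathsf q(H) \setminus H$ such that for each $h \in G_0$ there exists $B \in H$ with $\mathsf v_h(B) = 0$ and $SB \in H$. The weights $\{\id_G, -\id_G\} \subset \Gamma$ will drive every computation.

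For (1), I will take $S = g$. Since $g \neq 0$ and $\Gamma \subset \Aut(G)$, Lemma~\ref{4.2}(2) yields $g \notin H$; both $g^n \in \mathcal B(G_0)$ (sum $ng = 0$) and $g^{n-1} \in \mathcal B_\pm(G_0)$ (since $n-1$ is even, an equal split of $\pm$-signs works) lie in $H$, so $g \in \mathsf q(H)$. For $h \neq g$ the witness is $B = g^{n-1}$ with $SB = g^n$. For $h = g$ I take $B = (-g)^{n-1}$ (another $\pm$-weighted zero-sum, using $g \neq -g$) and $SB = g \cdot (-g)^{n-1}$; the key computation is that weighting the leading $g$ by $-\id$ and each $-g$ by $\id$ yields $-g - (n-1)g = -ng = 0$.

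For (2), I will take $S = 2g$. Since $\ord(2g) = n/2 \ge 2$ and automorphisms preserve order, $2g \notin H$; from $g(-g) \in H$ and $(2g)g(-g) \in H$ (weights $(-\id, \id, -\id)$ give $-2g + g + g = 0$) I get $2g \in \mathsf q(H)$. For $h \in G_0 \setminus \{g, -g\}$, including $h = 2g$ (note $2g \neq \pm g$ since $n \ge 4$), I take $B = g(-g)$. For $h = g$ I take $B = (-g)^2$ and $SB = (2g)(-g)^2 \in H$ via $\id$-weights ($2g - g - g = 0$); for $h = -g$ symmetrically $B = g^2$ and $SB = (2g) g^2 \in H$ via weights $(-\id, \id, \id)$.

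For (3), the direction ``$\Leftarrow$'' is immediate, since in an elementary $2$-group $\id_G = -\id_G$, so $\Gamma = \{\id_G\}$ and $\mathcal B_\Gamma(G) = \mathcal B(G)$, which is Krull and hence weakly Krull. For ``$\Rightarrow$'', if $G$ contains an element of order $\ge 3$, then (1) or (2) applies with $G_0 = G$ and produces a contradiction; so $G$ must be elementary $2$. It remains to rule out $\Gamma \supsetneq \{\id_G\}$: choose $\gamma \in \Gamma \setminus \{\id_G\}$ and $e_1 \in G$ with $e_2 := \gamma(e_1) \neq e_1$, and set $S = e_1 + e_2 \neq 0$. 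Then $S \notin H$ by Lemma~\ref{4.2}(2), while $e_1 e_2 \in H$ via the weights $(\gamma, \id)$ giving $e_2 + e_2 = 0$, and $e_1 e_2(e_1+e_2) \in \mathcal B(G) \subset H$; hence $S \in \mathsf q(H)$. For $h \notin \{e_1, e_2\}$, $B = e_1 e_2$ suffices. I expect the main obstacle to be the two remaining cases: for $h = e_1$ one needs $(e_1+e_2) e_2^2 \in H$, achieved by weights $(\gamma, \id, \gamma)$ yielding $\gamma(e_1+e_2) + e_2 + \gamma(e_2) = 2e_2 + 2\gamma(e_2) = 0$; for $h = e_2$ symmetrically $(e_1+e_2) e_1^2 \in H$ via $(\id, \id, \gamma)$, giving $(e_1+e_2) + e_1 + \gamma(e_1) = 2(e_1+e_2) = 0$. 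The use of $\gamma$ itself as a weight to engineer these cancellations in characteristic $2$ is the technical heart of the argument.
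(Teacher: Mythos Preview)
Your proof is correct and follows essentially the same approach as the paper: in each case you exhibit the very same element $S$ (namely $g$, $2g$, and $e_1+e_2$ respectively) and show it lies in every localization $H_{\mathfrak p_h}$ but not in $H$, using Proposition~\ref{4.1}(2) and Lemma~\ref{4.2}(2). The only differences are cosmetic choices of witness fractions (e.g.\ you use $g = g^n/g^{n-1}$ where the paper uses $g = (g^2)^{(n+1)/2}/g^n$), so there is nothing substantive to compare.
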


\begin{proof}
Recall from Proposition \ref{4.1} that $\mathfrak X(\mathcal B_{\Gamma}(G_0))= \{\mathfrak p_g\colon g\in G_0\}$, where $\mathfrak p_g=\{S\in\mathcal B_{\Gamma}(G_0)\colon \mathsf v_g(S)\geq 1\}$ for every $g\in G_0$.

1. Since $\{g^2, (-g)^2, g(-g), g^n, (-g)^n\} \subset \mc B_\Gamma(G_0)$, it follows that
\[
g=\frac{(g^2)^{(n+1)/2}}{g^{n}}=\frac{(-g)g\cdot \big((-g)^2 \big)^{(n-1)/2}}{(-g)^{n}}\in \bigcap_{\mathfrak p\in \mathfrak X (\mathcal B_{\Gamma}(G_0) )} \mathcal B_{\Gamma}(G_0)_{\mathfrak p}\,.
\]
But $g\not\in B_{\Gamma}(G_0)$ by Lemma \ref{4.2}.2, and therefore $B_{\Gamma}(G_0)$ is not weakly Krull.

\smallskip

2. Since $n-2$ is even, it follows that $\{(2g)g^{n-2}, (2g)(-g)^{n-2}, g^{n-2}, (-g)^{n-2} \} \subset \mc B_\Gamma(G)$, and thus
\[
2g=\frac{(2g)g^{n-2}}{g^{n-2}}=\frac{(2g)(-g)^{n-2}}{(-g)^{n-2}}\in \bigcap_{\mathfrak p\in \mathfrak X (\mathcal B_{\Gamma}(G_0) )} \mathcal B_{\Gamma}(G_0)_{\mathfrak p}\,.
\]
But $2g\not\in B_{\Gamma}(G_0)$ by Lemma \ref{4.2}.2, and therefore $B_{\Gamma}(G_0))$ is not weakly Krull.

\smallskip

3. If $G$ is an elementary $2$-group and $\Gamma=\{\id_G, -\id_G\}$, then $\mathcal B_{\Gamma}(G)=\mathcal B(G)$ is Krull and hence weakly Krull.
If $G$ is not an elementary $2$-group, then $\mathcal B_{\Gamma}(G)$ is not weakly Krull  by 1. and 2.

Now suppose $G$ is an elementary $2$-group and $\Gamma\neq\{\id_G, -\id_G\}$. Since $\id_G= -\id_G$, it follows that $G\cong C_2^r$ with $r\ge 2$, and there exist some $\tau\in \Gamma$ and $e\in G$ such that $\tau(e)\neq e$. It follows that
\[
e+\tau(e)=\frac{e^2 \big(e+\tau(e) \big)}{e^2}
=\frac{\tau(e)^2 \big(e+\tau(e) \big)}{\tau(e)^2}\in\bigcap_{\mathfrak p\in \mathfrak X(\mathcal B_{\Gamma}(G_0))} \mathcal B_{\Gamma}(G_0)_{\mathfrak p}\,.
\]
But $e+\tau(e)\not\in B_{\Gamma}(G)$ by Lemma \ref{4.2}.2, and therefore $B_{\Gamma}(G))$ is not weakly Krull.
\end{proof}

\smallskip
\begin{theorem} \label{4.4}
Let $G$ be a finite abelian group and let  $\Gamma\subset \Aut(G)$ be a subset with $\{\id_G, -\id_G\}\subset \Gamma$. Then the following statements are equivalent.

\smallskip

\begin{enumerate}
\item[(a)] $\mathcal B_{\Gamma} (G)$ is root closed.

\smallskip

\item[(b)] $\mathcal B_{\Gamma} (G)$ is Krull.

\smallskip

\item[(c)] $\mathcal B_{\Gamma} (G)$ is transfer Krull.

\smallskip

\item[(d)] $\mathcal B_{\Gamma} (G)$ is weakly Krull.

\smallskip

\item[(e)] $G$ is an elementary $2$-group and $\Gamma = \{\id_G, -\id_G \}$.
\end{enumerate}
\end{theorem}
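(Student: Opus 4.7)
The plan is to prove the cycle of implications, with the easy steps being $(e) \Rightarrow (b) \Rightarrow \{(a),(c),(d)\}$ and $(d) \Rightarrow (e)$. For $(e) \Rightarrow (b)$: in an elementary $2$-group we have $-\id_G = \id_G$, so $\Gamma = \{\id_G\}$ and hence $\mc B_\Gamma(G) = \mc B(G)$, which is Krull as noted in the paragraph following Proposition \ref{4.1}. The implications $(b) \Rightarrow (a), (c), (d)$ are standard: Krull monoids are completely integrally closed (hence root closed), are transfer Krull via the identity homomorphism, and are weakly Krull by definition. Finally, $(d) \Rightarrow (e)$ is Proposition \ref{4.3}.3.

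For $(a) \Rightarrow (e)$, I will mirror the argument of Proposition \ref{4.3}. Suppose $(e)$ fails; then one of the following holds: (i) some $g \in G$ has odd order $n \ge 3$; (ii) some $g \in G$ has even order $n \ge 4$; or (iii) $G$ is an elementary $2$-group with $\Gamma \ne \{\id_G, -\id_G\}$, whence $G \cong C_2^r$ with $r \ge 2$ and there exist $\tau \in \Gamma$ and $e \in G$ with $\tau(e) \ne e$. Set $x = g$, $x = 2g$, $x = e + \tau(e)$ in cases (i), (ii), (iii) respectively. In each case $x$ is a nonzero element of $G$, so $x \notin \mc B_\Gamma(G)$ by Lemma \ref{4.2}.2. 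Yet some power of $x$ lies in $\mc B(G) \subset \mc B_\Gamma(G)$: namely $g^n$ (since $ng = 0$), $(2g)^{n/2}$ (since $(n/2)(2g) = ng = 0$), and $(e + \tau(e))^2$ (since $2(e + \tau(e)) = 0$ in the elementary $2$-group $G$), respectively. Hence $x \in \widetilde{\mc B_\Gamma(G)} \setminus \mc B_\Gamma(G)$, so $\mc B_\Gamma(G)$ is not root closed.

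The main obstacle is $(c) \Rightarrow (e)$, since being transfer Krull does not automatically imply either root-closedness or the weakly Krull property, so neither of the two previous arguments applies directly. Since transfer homomorphisms preserve systems of sets of lengths (Proposition \ref{2.2}), one approach is to exhibit, in each failing case (i)--(iii), a set of lengths or a union of sets of lengths occurring in $\mc B_\Gamma(G)$ that cannot occur in any Krull block monoid $\mc B(G')$; for this one would analyze factorizations of products like $x^{2n}$ involving both the short atoms $g^2, (-g)^2, g(-g)$ and the long atoms $g^n, (-g)^n$ (or their elementary $2$-group analogues) built around the element $x$. A more structural route, which I favour, uses that $\mc B_\Gamma(G)$ is always a C-monoid (Theorem \ref{5.1} below) together with the general principle that a transfer Krull C-monoid must in fact be Krull; this reduces $(c)$ to $(b)$ and closes the cycle. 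The delicate point is that this last ingredient has to be drawn from the structural theory of C-monoids rather than from the zero-sum combinatorics that sufficed for the other implications.
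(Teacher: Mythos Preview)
Your handling of the easy implications and of $(d) \Rightarrow (e)$ via Proposition \ref{4.3}.3 matches the paper. Your direct argument for $(a) \Rightarrow (e)$ is correct and is in fact more self-contained than the paper's route: the paper instead proves $(a) \Rightarrow (b)$ by invoking the general fact (\cite[Theorem 2.7.14]{Ge-HK06a}) that a finitely generated root-closed affine monoid is Krull, and only reaches $(e)$ after passing through $(c)$ or $(d)$. Your case analysis producing an $x \in \widetilde{\mc B_\Gamma(G)} \setminus \mc B_\Gamma(G)$ is a pleasant shortcut.

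The genuine gap is $(c) \Rightarrow (e)$. Neither of your two proposed approaches is actually carried out, and the one you say you favour rests on a ``general principle'' --- that a transfer Krull C-monoid must already be Krull --- which you neither prove nor cite, and which is not a standard result available in the references of the paper. (Note, for instance, that every half-factorial atomic monoid is transfer Krull via the length map to $(\N_0,+) \cong \mc B(\{0\})$, so your principle would force every half-factorial C-monoid to be Krull; this is far from obvious and would itself require an argument.) As stated, this step is a black box, and the implication is not established.

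The paper's proof of $(c) \Rightarrow (e)$ is concrete and quite different. It first quotes \cite[Lemma 3.7]{B-M-O-S22} to conclude that $G$ is an elementary $2$-group. Then, assuming for a contradiction that $\Gamma \ne \{\id_G,-\id_G\}$, one picks $\tau \in \Gamma$ and $e \in G$ with $\tau(e) \ne e$ and observes that $e\tau(e)$, $e\tau(e)(e+\tau(e))$ and $(e+\tau(e))^2$ are atoms of $\mc B_\Gamma(G)$ with $(e\tau(e))^2 \mid \big(e\tau(e)(e+\tau(e))\big)^2$. Pushing this divisibility through a transfer homomorphism $\theta$ into a reduced Krull monoid $\mc B(G_0)$ and using that in a Krull monoid $u^2 \mid v^2$ implies $u \mid v$, one obtains $\theta(e\tau(e)) \mid \theta\big(e\tau(e)(e+\tau(e))\big)$; since both images are atoms, they are equal, which forces $\theta\big((e+\tau(e))^2\big) = 1$, contradicting that $(e+\tau(e))^2$ is an atom. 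This divisibility-in-the-target argument is the real content of the implication and is missing from your proposal.
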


\begin{proof}
(a) $\Longrightarrow$ (b)\,  Since $\mathcal B_{\Gamma} (G)$ is finitely generated, this follows from  \cite[Theorem 2.7.14]{Ge-HK06a}.

\smallskip

(b) $\Longrightarrow$ (c) \, This holds by the  definition of being transfer Krull.

\smallskip

(c) $\Longrightarrow$ (e)\, Let $\mathcal B_{\Gamma} (G)$ be transfer Krull and $\theta\colon B_{\Gamma}(G)\rightarrow \mathcal B(G_0)$ a transfer homomorphism, where $G_0$ is a subset of some abelian group (see \cite[Section 5]{Ge-Zh20a}). By \cite[Lemma 3.7]{B-M-O-S22}, \,$G$ is an elementary $2$-group, and we assume to the contrary that $\Gamma \ne  \{\id_G, -\id_G \}$. Since $e\tau(e)$, $e\tau(e) \big(e+\tau(e) \big)$ and $\big(e+\tau(e) \big)^2$ are  atoms of $\mathcal B_{\Gamma}(G)$ and $(e\tau(e))^2 \t \big( e \tau(e)(e+ \tau(e) \big)^2$, it follows that $\theta(e\tau(e))^2 \mid \theta(e \tau(e)(e+\tau(e))^2$, and consequently $\theta(e\tau(e))\t \theta(e \tau(e)(e+\tau(e))$. Since $\theta(e\tau(e))$ and $\theta(e\tau(e)(e+\tau(e))$ are atoms in $\mc B(G_0)$ and $\mc B(G_0)$ is reduced, we get $\theta(e\tau(e))=\theta(e\tau(e)(e+\tau(e))$. Since $\theta(e\tau(e)(e+\tau(e))^2 = \theta(e\tau(e))^2 \theta(e+\tau(e))^2$, it follows that $\theta \big( (e+\tau(e))^2 \big) =  1_{\mathcal F (G)}$, a contradiction.

\smallskip

(e) $\Longrightarrow$ (a)\, If $G$ is an elementary $2$-group and $\Gamma = \{\id_G, -\id_G \}$, then $\mathcal B_{\Gamma} (G) = \mathcal B (G)$ is Krull and hence root closed.

(b) $\Longrightarrow$ (d)\, Obvious.

\smallskip

(d) $\Longrightarrow$ (e) \, This follows by Proposition \ref{4.3}.
\end{proof}

The characterizations, provided in Theorem \ref{4.4}, are similar to what is known about the monoid of product-one sequences over (not necessarily abelian) groups (see \cite[Theorem 3.14]{Fa-Zh22a}).

\medskip
\section{C-monoids and class semigroups} \label{5}
\medskip

C-monoids are defined as submonoids of factorial monoids with finite class semigroup. We refer to \cite[Sections 2.8,\,2.9 and 3.3]{Ge-HK06a}\, for their algebraic and arithmetic theory. For the convenience of the reader, we recall the basic definitions.

Let  $F = F^{\times} \times \mathcal F (P)$ be a factorial monoid and let $H \subset F$ be a submonoid. Two elements $y, y' \in F$ are called $H$-equivalent (we write $y \sim y'$) if $y^{-1}H \cap F = {y'}^{-1} H \cap F$ or, in other words,
\[
\text{if for all} \ x \in F, \ \text{we have} \ xy \in H \quad \text{if and only if} \quad xy' \in H \,.
\]
$H$-equivalence (as introduced in \cite[Section 4]{Ge-HK04a}) is a congruence relation on $F$, and for $y \in F$, we denote by $[y]_H^F = [y]$ its congruence class. Then
\[
\mathcal C (H,F) = \{[y] \colon y \in F \} \quad \text{and} \quad \mathcal C ^* (H,F) = \{ [y] \colon y \in (F \setminus F^{\times}) \cup \{1\} \} \subset \mc C(H,F)
\]
are commutative semigroups with identity element $[1]$, \ $\mathcal C (H,F)$ is the \,{\it class semigroup}\, and \,$\mathcal C^* (H,F) $ is the \,{\it reduced class semigroup}\, of $H$ in $F$.
A monoid $H$ is said to be a {\it \C-monoid} (defined in a factorial monoid $F$) if it is a submonoid of $F$ such that $H \cap F^{\times} = H^{\times}$ and $\mathcal C^* (H, F)$ is finite.

A commutative ring is said to be a  C-ring if its monoid  of regular elements is a C-monoid. If $H$ is a C-monoid, then $H$ is a Mori monoid with nonempty conductor $(H \DP \widehat H)$, and $\widehat H$ is a Krull monoid with finite class group. Every Krull monoid with finite class group is a C-monoid, and its class semigroup coincides with its usual class group. A finitely generated monoid is a C-monoid if and only if the class group of its complete integral closure is finite (\cite[Proposition 4.8]{Ge-Ha08b}). Orders in algebraic number fields are C-domains. More generally, consider
a $v$-Marot Mori ring $R$ with nonzero conductor $\mathfrak f = (R \DP \widehat R)$. If $\mathcal C (\widehat R)$ and $\widehat{R}/\mathfrak f$ are finite, then $R$ is a C-ring (\cite[Theorem 4.8]{Ge-Ra-Re15c}), and in some special cases the converse holds (\cite{Re13a}). Apart from commutative algebra, C-monoids pop up in the theory of $C^*$-algebras (\cite{Br20a, Br21a}). A more general concept of C-domains (covering a larger class of Mori domains) is studied in \cite{Ka16b}.

The class semigroup of a C-monoid $H$ is a union of groups (i.~e., a Clifford semigroup) if and only if $H$ is seminormal (\cite{Ge-Zh19c}). For this reason, we first derive a characterization of when a monoid $\mathcal B_{\Gamma} (G)$ is seminormal (Theorem \ref{5.2}). Then, we establish an explicit description of the  class semigroup of $\mathcal B_{\pm} (G)$ in the seminormal case (Theorem \ref{5.5}). So far, explicit descriptions of  class semigroups (which are not groups) have been found only for  monoids of product-one sequences over finite non-abelian groups (\cite[Section 4]{Oh20a}).

\smallskip

The following Theorem \ref{5.1}\, is due to \cite[Theorem 3.10]{B-M-O-S22}. We provide it with a simple proof using \cite[Proposition 2.6]{Cz-Do-Ge16a}.

\smallskip
\begin{theorem} \label{5.1}
Let $G$ be a finite abelian group, and let $G_0 \subset G$ and $\Gamma \subset \End (G)$ be nonempty subsets. Then $\mathcal B_{\Gamma} (G_0)$ is a \C-monoid defined in $\mathcal F (G_0)$.
\end{theorem}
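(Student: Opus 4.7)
The plan is to verify the two defining properties of a C-monoid: (i) $\mathcal{B}_\Gamma(G_0) \cap \mathcal{F}(G_0)^{\times} = \mathcal{B}_\Gamma(G_0)^{\times}$, and (ii) the reduced class semigroup $\mathcal{C}^*(\mathcal{B}_\Gamma(G_0), \mathcal{F}(G_0))$ is finite. Since $\mathcal{F}(G_0)$ is free abelian its group of units is trivial, so (i) reduces to $\{1\} = \{1\}$. For (ii), I would invoke \cite[Proposition 2.6]{Cz-Do-Ge16a} to reduce the problem to bounding the number of $\mathcal{B}_\Gamma(G_0)$-equivalence classes in $\mathcal{F}(G_0)$.

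The pivotal observation is that the $\Gamma$-weighted sum extends to a set-valued map $\sigma_\Gamma \colon \mathcal{F}(G_0) \to 2^G$ that is additive in the Minkowski sense, $\sigma_\Gamma(TS) = \sigma_\Gamma(T) + \sigma_\Gamma(S)$, and by construction $TS \in \mathcal{B}_\Gamma(G_0)$ iff $0 \in \sigma_\Gamma(T) + \sigma_\Gamma(S)$. Hence if $\sigma_\Gamma(S) = \sigma_\Gamma(S')$ then $S$ and $S'$ are $\mathcal{B}_\Gamma(G_0)$-equivalent in $\mathcal{F}(G_0)$, so it suffices to show that only finitely many subsets of $G$ arise as values of $\sigma_\Gamma$ on $\mathcal{F}(G_0)$.

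I would establish this via a stabilization lemma for single-letter sequences. Fix $\gamma_0 \in \Gamma$ once and for all, and for each $g \in G_0$ set $H_g := \langle \gamma(g) - \gamma_0(g) : \gamma \in \Gamma \rangle \leq G$. I would show that $\sigma_\Gamma(g^k) = k \gamma_0(g) + H_g$ for every $k \geq N := (|\Gamma|-1)\exp(G)$: the inclusion ``$\subseteq$'' is immediate, while ``$\supseteq$'' is proved by writing an arbitrary integer combination $\sum_\gamma c_\gamma (\gamma(g) - \gamma_0(g))$ of the generators of $H_g$ with coefficients $c_\gamma \in [0, \exp(G))$ (legal because $\exp(G)$ annihilates $G$) and assigning the remaining $k - \sum_\gamma c_\gamma \geq 0$ occurrences to $\gamma_0$. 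Because $\exp(G) \cdot \gamma_0(g) = 0$, the coset $k \gamma_0(g) + H_g$ depends only on $k \bmod \exp(G)$. Combining this with Minkowski additivity, $\sigma_\Gamma(S)$ depends only on the tuple assigning to each $g \in G_0$ either the exact value $\mathsf{v}_g(S)$ (if $\mathsf{v}_g(S) < N$) or $\mathsf{v}_g(S) \bmod \exp(G)$ (otherwise), leaving at most $(N + \exp(G))^{|G_0|}$ possibilities. The main obstacle I expect is the stabilization identity itself, in particular identifying a uniform threshold $N$ that works across all $g \in G_0$; once it is in place, the rest is bookkeeping and \cite[Proposition 2.6]{Cz-Do-Ge16a} packages the finiteness of $\mathcal{C}^*$ cleanly.
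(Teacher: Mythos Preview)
Your proposal is correct but takes a genuinely different route from the paper, and it carries some unnecessary weight.

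The paper's proof is essentially one line: $\mathcal{B}_\Gamma(G_0)$ is finitely generated (recorded in Section~\ref{2}), and $S^{\exp(G)} \in \mathcal{B}_\Gamma(G_0)$ for every $S \in \mathcal{F}(G_0)$ (choose any fixed $\gamma \in \Gamma$ as the weight on every term; the sum is $\gamma\bigl(\exp(G)\,\sigma(S)\bigr) = 0$). These two facts are precisely the hypotheses of \cite[Proposition~2.6]{Cz-Do-Ge16a}, which then delivers the C-monoid conclusion as a black box.

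Your approach instead verifies the definition directly: $\sigma_\Gamma$ is Minkowski-additive, and $\sigma_\Gamma(S) = \sigma_\Gamma(S')$ forces $S$ and $S'$ to be $\mathcal{B}_\Gamma(G_0)$-equivalent, so $|\mathcal{C}(\mathcal{B}_\Gamma(G_0),\mathcal{F}(G_0))|$ is bounded by the number of values of $\sigma_\Gamma$. This is valid and arguably more transparent (indeed the paper later uses exactly this observation in Lemma~\ref{5.3}.1, under the extra hypothesis that $\Gamma$ is a group, to get the converse implication as well). Two comments, however. First, your stabilization lemma is entirely unnecessary: $G$ is finite, so $2^G$ is finite, and $\sigma_\Gamma$ automatically takes at most $2^{|G|}$ values --- no threshold $N$, no periodicity modulo $\exp(G)$, no bookkeeping. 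Second, for this route you do not need \cite[Proposition~2.6]{Cz-Do-Ge16a} at all; you are checking the defining conditions of a C-monoid directly, and the finiteness of $\mathcal{C}^*(H,F) \subset \mathcal{C}(H,F)$ follows immediately from your bound. The paper cites that proposition because its inputs (finite generation plus a uniform exponent) are different, not because it is a necessary bridge.
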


\begin{proof}
Since $\mathcal B_{\Gamma} (G_0)$ is finitely generated and $S^{\exp (G)} \in \mathcal B_{\Gamma} (G_0)$ for every $S \in \mathcal F (G_0)$, it follows by \cite[Proposition 2.6]{Cz-Do-Ge16a} that $\mc B_\Gamma(G_0)$ is a C-monoid.
\end{proof}

We need a simple technical lemma.

\smallskip
\begin{lemma}\label{lemma}
Let $G = C_{2^{t_1}}\oplus \ldots\oplus C_{2^{t_r}}$ be a finite abelian $2$-group,  where $r,t_1,\ldots,t_r\in \N$ with $t_1<\ldots<t_r$, and let $(e_1,\ldots,e_r)$ be a basis of $G$ with $\ord(e_i)=2^{t_i}$ for all $i\in [1,r]$.
If $m \in [1,r]$, \ $k \in \N$ odd, and $\alpha \in \langle e_1, \ldots, e_{m-1}, 2e_m, e_{m+1}, \ldots, e_r\rangle$ with $\ord(\alpha)\le 2^{t_m}$, then $(e_1,\ldots, e_{m-1}, ke_m+\alpha,e_{m+1}, \ldots, e_r)$ is a basis of \,$G$.
\end{lemma}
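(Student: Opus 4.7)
The plan is to set $f_m := k e_m + \alpha$ and verify that the new tuple $\mathbf{f} := (e_1, \ldots, e_{m-1}, f_m, e_{m+1}, \ldots, e_r)$ is a basis of $G$. I would reduce this to two claims: first, that $\ord(f_m) = 2^{t_m}$ and $\langle \mathbf{f}\rangle = G$. Independence of $\mathbf{f}$ will then follow automatically by a counting argument, because the canonical epimorphism $C_{2^{t_1}}\oplus\cdots\oplus C_{2^{t_r}} \twoheadrightarrow G$ induced by sending the standard generators to $\mathbf{f}$ has source and target of the same order $|G|$, hence is forced to be an isomorphism.

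For the order computation, I would use the hypothesis on $\alpha$ to write $\alpha = 2a_m e_m + h$ with $a_m \in \Z$ and $h \in H := \langle e_i : i \ne m\rangle$. Since $G$ is a $2$-group, $\ord(\alpha)$ is a power of $2$, so the bound $\ord(\alpha) \le 2^{t_m}$ forces $2^{t_m}\alpha = 0$, which gives $2^{t_m} f_m = 0$. To see that the order is exactly $2^{t_m}$, I would compute
\[
2^{t_m-1} f_m \;=\; k\cdot 2^{t_m-1} e_m + 2^{t_m}a_m e_m + 2^{t_m-1}h \;=\; 2^{t_m-1}e_m + 2^{t_m-1}h,
\]
using that $k$ is odd (so $k\cdot 2^{t_m-1} \equiv 2^{t_m-1}\pmod{2^{t_m}}$) and that $2^{t_m}e_m = 0$. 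Independence of the original basis $(e_1,\ldots,e_r)$ then forces $2^{t_m-1}f_m \ne 0$, since its $e_m$-component is $2^{t_m-1}\not\equiv 0\pmod{2^{t_m}}$.

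For generation, I set $c := k + 2a_m$, so that $f_m = c e_m + h$; then $c$ is odd, hence coprime to $2^{t_m}$, and I pick $s \in \Z$ with $sc \equiv 1\pmod{2^{t_m}}$. This yields $e_m = s(c e_m) = s(f_m - h) \in \langle \mathbf{f}\rangle$, because $h \in H \subset \langle \mathbf{f}\rangle$; hence $\langle \mathbf{f}\rangle = G$, completing the argument. I expect the main obstacle to be pinning down $\ord(f_m)$: both hypotheses, oddness of $k$ and $\alpha \in \langle \ldots, 2e_m, \ldots\rangle$, are indispensable there, cooperating to keep the $e_m$-component nonzero at the critical step $2^{t_m-1}f_m$; dropping either (e.g.\ allowing $e_m$ itself to appear in $\alpha$, or letting $k$ be even) would immediately destroy the conclusion.
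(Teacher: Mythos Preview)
Your proof is correct. The generation argument is essentially identical to the paper's: both write $f_m = (k+2a_m)e_m + h$ with $h \in \langle e_i : i\ne m\rangle$, invert the odd coefficient $k+2a_m$ modulo $2^{t_m}$, and recover $e_m$ in the span of the new tuple.

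The difference lies in how independence is handled. The paper verifies it directly: assuming $b_m f_m + \sum_{j\ne m} b_j e_j = 0$, it projects onto the $e_m$-component to get $2^{t_m}\mid b_m$, then uses $\ord(\alpha)\le 2^{t_m}$ to conclude $b_m f_m = 0$, and finally invokes the independence of the original $e_j$. You instead compute $\ord(f_m)=2^{t_m}$ exactly and then argue that the surjection $C_{2^{t_1}}\oplus\cdots\oplus C_{2^{t_r}}\twoheadrightarrow G$ sending standard generators to $\mathbf f$ is well-defined and, by cardinality, an isomorphism. Your route trades the direct relation-chasing for an order computation plus a structural observation; it is a standard and perfectly valid alternative. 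One small remark: your counting argument only needs $\ord(f_m)\mid 2^{t_m}$ for the map to be well-defined, so the full order computation is strictly speaking more than necessary (once surjectivity and equal cardinalities give an isomorphism, $\ord(f_m)=2^{t_m}$ falls out automatically).
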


\begin{proof}
Let $\alpha = a_1e_1+ \ldots + a_re_r$, where $a_1, \ldots, a_r \in \Z$ and $2 \t a_m$. If $k'\in \Z$ is such that $k'(k+a_m) \equiv 1 \mmod 2^{t_m}$, then
\[
e_m = k'(ke_m +\alpha) - \sum_{\substack{j=1\\ j\ne m}}^r k'a_je_j, \quad \text{and thus} \quad G = \langle e_1, \ldots, e_{m-1}, ke_m+\alpha, e_{m+1}, \ldots, e_r\rangle.
\]
To prove independence, let $b_1, \ldots, b_r \in \Z$ such that
\[
0 = b_m(ke_m+\alpha) + \sum_{\substack{j=1\\ j\ne m}}^r b_je_j  = b_m(k+a_m)e_m +\sum_{\substack{j=1\\ j\ne m}}^r (b_ma_j+b_j)e_j.
\]
It follows that $b_m(k+a_m) e_m=0$, hence $2^{t_m} \t b_m$, and thus $b_m(ke_m+\alpha)=0$, since $\,\ord(\alpha) \le 2^{t_m}$. Now the independence of $(e_1, \ldots, e_r)$ implies $b_je_j=0$ for all $j \in [1,r]\setminus \{m\}$.
\end{proof}

Our next result offers  a characterization of when $\mathcal B_{\Gamma} (G)$ is seminormal and characterizes its complete integral closure.

\smallskip
\begin{theorem}\label{5.2}
Let $G$ be a finite abelian group, $|G|>1$, and let $\Gamma\subset \Aut(G)$ be a subset such that $\{\id_G, -\id_G\}\subset \Gamma$.
\begin{enumerate}

\smallskip

\item
If $\mathcal B_{\Gamma}(G)$ is seminormal, then $\exp(G)$ is a power of \,$2$.

\smallskip
		
\item
$\mathcal B_\pm(G)$ is seminormal if and only if $\exp(G)\t 4$. If this holds, then
\[\qquad    \quad
\widehat{\mathcal B_\pm(G)}=\{S\in \mathcal F(G) \colon \sigma(S)\in 2G\}.
\]

\smallskip
		
\item
Let $\Gamma=\Aut(G)$.

\smallskip

\begin{enumerate}
\item
$\mathcal B_{\Gamma}(G)$ is seminormal if and only if $G\cong C_{2^{t_1}}\oplus \ldots\oplus C_{2^{t_r}}$, where $r,\,t_1,\ldots,t_r\in \N$ and $t_1<\ldots<t_r$.

\smallskip

\item
Let $G\cong C_{2^{t_1}}\oplus \ldots\oplus C_{2^{t_r}}$, where $r,\,t_1,\ldots,t_r\in \N$ and $t_1<\ldots<t_r$. For $S = g_1 \cdot \ldots \cdot g_\ell \in \mc F(G)$, where $\ell\in \N$ and $g_1,\ldots, g_\ell \in G$, we set $\mathsf n(S) = |\{j \in [1,\ell]\colon \ord(g_j)=2^{t_r}\}|$. Then
\[\qquad
\widehat{\mathcal B_\Gamma(G)}=\{S\in \mathcal F(G)\colon \mathsf n(S) \ \text{ is even }\}.
\]
\end{enumerate}
\end{enumerate}
\end{theorem}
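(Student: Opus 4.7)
My proof proceeds in three parts, mirroring the theorem.

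For Part 1, pick $g \in G$ of odd order $n \ge 3$ if $\exp(G)$ has an odd prime factor. Then $g^n \in \mathcal B(G) \subseteq \mathcal B_\Gamma(G)$, so $g \in \mathcal B_\Gamma(G)'$ by Lemma \ref{4.2}.3(b), while Lemma \ref{4.2}.2 gives $g \notin \mathcal B_\Gamma(G)$; hence $\mathcal B_\Gamma(G)$ is not seminormal. Two analogous constructions handle the ``only if'' directions of the later parts: if $\exp(G) = 2^t$ with $t \ge 3$ and $g$ has order $2^t$, the sequence $S = g \cdot (3g)$ satisfies $S \notin \mathcal B_\pm(G)$ (the two possible splittings produce sum-differences $\pm 2g$ or $\pm 4g$, all nonzero) while $S^3 \in \mathcal B_\pm(G)$ via the sign pattern giving $-3g + 3g = 0$; and for Part 3(a), if $t_i = t_{i+1}$ then $S = e_i$ has $S^3 \in \mathcal B_\Gamma(G)$ because $\id$, the swap $e_i \leftrightarrow e_{i+1}$, and $e_i \mapsto -e_i - e_{i+1}$ (all in $\Aut(G)$ precisely when $t_i = t_{i+1}$) produce images summing to $e_i + e_{i+1} + (-e_i - e_{i+1}) = 0$.

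For Part 2 ``if'', let $S = g_1 \cdots g_\ell \in \mathcal B_\pm(G)'$, so $S^m \in \mathcal B_\pm(G)$ for some odd $m$; the splitting of $S^m$ yields odd integers $n_i$ with $\sum n_i g_i = 0$, and since $\exp(G) \mid 4$ one has $n_i g_i = \epsilon_i g_i$ for the unique $\epsilon_i \in \{\pm 1\}$ with $\epsilon_i \equiv n_i \pmod 4$, whence $S \in \mathcal B_\pm(G)$. For the complete integral closure, the fact that $S^2 \in \mathcal B_\pm(G)$ for every $S$ (Lemma \ref{4.2}.3(a)) reduces $S \in \widehat{\mathcal B_\pm(G)}$ to the existence of $c \in \mathcal B_\pm(G)$ with $cS \in \mathcal B_\pm(G)$. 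The inclusion $\widehat{\mathcal B_\pm(G)} \subseteq \{S : \sigma(S) \in 2G\}$ follows from $T \in \mathcal B_\pm(G) \Rightarrow \sigma(T) \in 2G$; conversely, if $\sigma(S) = 2h$ then $c = (-h)(-h) \in \mathcal B_\pm(G)$ satisfies $\sigma(cS) = 0$, so $cS \in \mathcal B(G) \subseteq \mathcal B_\pm(G)$.

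The substantive content of Part 3 is a structural description of $\Aut(G)$ and its orbits. For distinct $t_1 < \ldots < t_r$ and $\gamma \in \Aut(G)$ with matrix entries $\gamma(e_j) = \sum_i \gamma_{i,j} e_i$, the order constraint $\ord(\gamma(e_j)) = 2^{t_j}$ forces $2^{t_i - t_j} \mid \gamma_{i,j}$ whenever $i > j$, while invertibility (equivalently, $\bar\gamma \in \mathrm{GL}_r(\mathbb F_2)$) forces $\gamma_{i,i}$ odd for each $i$. It follows that $\Aut(G)$ is closed under both $(\gamma_1,\gamma_2) \mapsto \gamma_1 + \gamma_2 - \id$ and $\gamma \mapsto 2\id - \gamma$, which directly implies that $K_g := \{(\gamma - \id)(g) : \gamma \in \Aut(G)\}$ is a subgroup of $G$; thus every orbit has the form $\mathrm{Orb}(g) = g + K_g$. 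The same matrix description also shows that the parity of the $e_r$-coefficient is $\Aut(G)$-invariant, so $K_g \subseteq K_{e_r} := \{x \in G : a_r(x) \text{ even}\}$ for every $g$.

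With orbits realized as cosets, the rest is clean. One checks $S \in \mathcal B_\Gamma(G) \iff \sigma(S) \in K(S) := \sum_i K_{g_i}$ and $S^m \in \mathcal B_\Gamma(G) \iff m\sigma(S) \in K(S)$. For Part 3(a) ``if'', $G$ is a $2$-group by Part 1, so $G/K(S)$ is a $2$-group on which odd $m$ acts invertibly; the two conditions coincide, yielding seminormality. For Part 3(b) ``$\subseteq$'', the reduction to $\exists c \in \mathcal B_\Gamma(G)$ with $cS \in \mathcal B_\Gamma(G)$, together with $K(\,\cdot\,) \subseteq K_{e_r}$, forces both $\mathsf n(c)$ and $\mathsf n(cS) = \mathsf n(c) + \mathsf n(S)$ to be even, hence $\mathsf n(S)$ even. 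Conversely, given $\mathsf n(S)$ even, set $c = S \cdot e_r \cdot e_r$: the inclusions $\sigma(S), 2e_r \in K_{e_r}$ together with $K(S) + K_{e_r} = K_{e_r}$ yield $c, cS \in \mathcal B_\Gamma(G)$, so $S \in \widehat{\mathcal B_\Gamma(G)}$. The main obstacle throughout is the coset lemma for orbits, which hinges on the matrix description of $\Aut(G)$ and fails precisely when two exponents coincide.
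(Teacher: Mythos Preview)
Your proof is correct. Parts 1 and 2 track the paper's argument closely, with a different counterexample ($g\cdot(3g)$ in place of the paper's $g\cdot(5g)$) and a cleaner witness for the complete integral closure: where the paper builds $S_0 = f_1^2\cdots f_t^2$ with $\sigma_\pm(S_0)=2G$, your $c=(-h)^2$ with $2h=\sigma(S)$ does the job in one line.

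Part 3 is where you genuinely diverge. The paper introduces invariants $\mathsf d_i(S),\mathsf d(S),\mathsf m(S),\mathcal N(S),\mathcal I(S)$ and a recursive sequence $(S_\lambda)_\lambda$, proves through claims {\bf A0}--{\bf A3} the criterion ``$S\in\mathcal B_\Gamma(G)\iff |\mathcal N(S_\lambda)|$ even for all $\lambda$'', and runs an induction on $\mathsf t(S)$. Your route is structural: because the $t_i$ are distinct, the mod-$2$ reduction of every $\gamma\in\Aut(G)$ is unipotent upper-triangular, so $\Aut(G)$ is closed under $\gamma_1+\gamma_2-\id$ (its mod-$2$ reduction is again unipotent, hence invertible by Nakayama on the finite $2$-group), which makes each $K_g$ a subgroup and every orbit a coset $g+K_g$. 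The identity $\mathcal B_\Gamma(G)=\{S:\sigma(S)\in K(S)\}$ then reduces seminormality to the fact that odd $m$ acts invertibly on the $2$-group $G/K(S)$, and reduces 3(b) to bookkeeping with the index-$2$ subgroup $K_{e_r}$. This is a real simplification; the coset description of $\sigma_\Gamma(S)$ is exactly what the paper's recursive machinery is computing piece by piece, and your argument isolates it cleanly.

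One point to make explicit: when you write $K_{e_r}:=\{x:a_r(x)\text{ even}\}$, you are asserting that the orbit-based $K_{e_r}$ equals this index-$2$ subgroup, not merely is contained in it. You need the equality in the final step (to get $\sigma(S)\in K(Se_r^2)$), and it follows from your matrix description by taking $\gamma(e_i)=e_i$ for $i<r$ and $\gamma(e_r)=e_r+x$; but say so, since the notation $:=$ reads as a redefinition.
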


\begin{proof}
Recall that $\mc B_\Gamma(G)' \subset \widehat{\mc B_\Gamma (G)} \subset \widehat{\mc F(G)} = \mc F(G)$.

\smallskip

1.	Suppose that $\exp(G)$ is not a power of $2$. Then there exists some $g\in G$ such that $\ord(g) =n \ge 3$ is odd, and by Lemma \ref{4.2} it follows that $g \in \mc B_\Gamma(G)' \setminus \mc B_\Gamma(G)$. Hence $\mc B_\Gamma(G)$ is not seminormal,
	
\smallskip
	
2. If $\exp(G)$ is not a power of $2$, then $\mathcal B_\pm(G)$ is not seminormal by 1. If $\exp(G)=2^r$ with $r\ge 3$, then there exists some $g\in G$ such that $\ord(g)=8$, and we set $S=g(5g) \in \mc F(G)$. It follows that $S^2\in \mathcal B_\pm(G)$ and $S^3 = \big( g^3(5g) \big) (5g)^2\in \mathcal B_\pm(G)$, whence $S^n \in\mc B_\pm(G)$ for all $n \ge 2$ and thus $S\in (\mathcal B_\pm(G))'$. Since $\{\pm g\pm 5g\}=\{4g, \pm 2g\}$, we obtain $S\notin \mathcal B_\pm(G)$, and therefore $\mathcal B_\pm(G)$ is not seminormal.
	
Suppose now that $\exp(G)\t 4$. Let $S\in (\mathcal B_\pm(G))'$ and $t \in \N$ be odd  such that $S^t\in \mathcal B_\pm(G)$. We shall prove that $S \in \mc B_\pm(G)$. Let $S=g_1 \cdot \ldots \cdot g_\ell$, where $\ell\in \N_0$, \ $g_1,\ldots,g_\ell\in G$ and $\ord(g_i)\t 4$ for all $i \in [1,\ell]$. Then $S^t = S'S''$ for some $S',\,S'' \in \mc F(G)$ such that $\sigma(S')-\sigma(S'')=0$. We set
\[
S'= \prod_{i=1}^\ell g_i^{x_i} \quad \text{and} \quad S''= \prod_{i=1}^\ell g_i^{y_i}
\]
where $x_i,\,y_i \in \N_0$ and $x_i+y_i = t$ for all $i \in [1,\ell]$. Since $x_i-y_i = t -2y_i \equiv 1 \mmod 2$, there exists some $e_i \in\{\pm 1\}$ such that $x_i-y_i \equiv e_i \mmod 4$ for all $i \in [1,\ell]$. Now we obtain
\[
0 = \sigma(S')-\sigma(S'') = \sum_{i=1}^\ell (x_i-y_i)g_i = \sum_{i=1}^\ell e_ig_i,
\]
which shows that $S \in \mc B_\pm(G)$.

\smallskip

It remains to consider $\wh{\mc B_\pm(G)}$. If $\exp(G) \t 2$, then $\mc B_\pm(G) = \mc B(G)$ is a Krull monoid, and there is nothing to do. Thus let \,$\exp(G) = 4$, say $G = C_2^r\oplus C_4^t$ for some $r \in \N_0$ and $t \in \N$.
 and let $(e_1,\ldots,e_r,f_1,\ldots,f_t)$ be basis of $G$ such that $\,\ord(e_i)=2$ for all $i \in [1,r]$, and \,$\ord(f_j)=4$ for all $j \in [1,t]$. Then $(2f_1,\ldots,2f_t)$ is a basis of $2G$, we set $S_0=f_1^2\cdot\ldots\cdot f_t^2$ and obtain
\begin{equation} \label{5.2.1}
\sigma_\pm(S_0) = \Bigl\{\sum_{j=1}^t (\varepsilon_j-\varepsilon'_j)f_j \colon \varepsilon_j,\,\varepsilon_j' \in \{\pm 1\} \Bigr\} = 2G.
\end{equation}
Note, if $S = g_1 \cdot \ldots \cdot g_{\ell} \in \mathcal F (G)$, then
\begin{equation} \label{5.2.2}
\begin{aligned}
\sigma_{\pm} (S) & = \sum_{\substack{i=1 \\ \ord (g_i) \le 2}}^{\ell} g_i + \sum_{\substack{i=1\\ \ord(g_i)= 4}}^\ell \{g_i,-g_i\} = \sum_{\substack{i=1 \\ \ord (g_i) \le 2}}^{\ell} g_i + \sum_{\substack{i=1 \\ \ord (g_i) = 4}}^{\ell} \big(  g_i + \{0, 2g_i\} \big) \\
 & = \sigma (S) + \sum_{\substack{i=1\\ \ord(g_i)= 4}}^\ell \{0, 2g_i\} \subset \sigma (S) + 2G \,.
\end{aligned}
\end{equation}
Now let $S \in \mc F (G)$ be such that $\sigma(S)\in 2G$. Then, by \eqref{5.2.1} and \eqref{5.2.2}, it follows that
\[
\begin{aligned}
\sigma_\pm (S_0S) & = \sigma_\pm(S_0) + \sigma_\pm(S) = 2G +   \sigma_\pm(S) =2G.
\end{aligned}
\]
In particular, $0\in \sigma_{\pm}(S_0S)$, and therefore $S_0S\in \mathcal B_\pm(G)$.
Since $S^2\in \mathcal B_\pm(G)$, it follows that $S_0S^k\in \mathcal B_\pm(G)$ for all $k\in \N$, and therefore $S\in \widehat{\mathcal B_{\pm}(G)}$.

Conversely, let $S\in \widehat{\mathcal B_{\pm}(G)}$, and let $S_1\in \mathcal B_{\pm}(G)$ such that $S_1S\in \mathcal B_{\pm}(G)$. Then, \eqref{5.2.2} implies that $0 \in \sigma_{\pm} (S_1) \subset \sigma (S_1) + 2G$ and $0 \in \sigma_{\pm} (S_1S) \subset \sigma (S_1S) + 2G$, whence $\sigma(S_1),\,\sigma(S_1S)\in 2G$, and consequently  $\sigma(S)=\sigma(S_1S)-\sigma(S_1)\in 2G$.

\smallskip

3.(a)\, If $\mathcal B_{\Gamma}(G)$ is  seminormal, then $\exp(G)$ is a power of $2$ by 1., and we set
\[
G = C_{2^{t_1}}^{s_1}\oplus \ldots\oplus C_{2^{t_r}}^{s_r} \,, \quad \text{ where} \quad  r, t_1,\ldots, t_r, s_1,\ldots,s_r\in \N \ \text{ and } \  t_1<\ldots<t_r \,.
\]
Assume to the contrary that  $s_i\ge 2$ for some  $i\in [1,r]$. Then there exist independent elements $e_1,e_2\in G$ such that $\ord(e_1)=\ord(e_2)=2^{t_i}$ and $\tau_1,\tau_2\in \Gamma$ such that $\tau_1(e_1)=e_2$ and $\tau_2(e_1)=-e_1-e_2$.
Then $e_1+\tau_1(e_1)+\tau_2(e_1)=0$, hence $e_1^3\in \mathcal B_{\Gamma}(G)$, and therefore $e_1\in (\mathcal B_{\Gamma}(G))'\setminus \mathcal B_{\Gamma}(G)$ by Lemma \ref{4.2}, a contradiction.

\smallskip
As to the converse, we suppose that
\[
G = C_{2^{t_1}}\oplus \ldots\oplus C_{2^{t_r}} \,, \quad \text{ where } \quad r,t_1,\ldots,t_r\in \N \ \text{ and } \ t_1<\ldots<t_r ,
\]
and we assert that $\mathcal B_{\Gamma}(G)$ is seminormal. Let $(e_1,\ldots,e_r)$ be a basis of $G$ and $\ord(e_i)=2^{t_i}$ for all $i\in [1,r]$. For $g \in G$ and $i \in[1,r]$, let $[g]_i \in [0, 2^{t_i}-1]$ be the uniquely determined integer such that \,$g = [g]_1e_1 +\ldots+ [g]_re_r$, and hence we have $\ord(g) = \max\{\ord([g]_ie_i \colon i \in [1,r]\}$.  We denote by $\mathsf v_2 \colon \Q \to \Z \cup \{\infty\}$ the $2$-adic valuation. Hence, if $b \in \Z$, then $bg =0$ if and only if $b[g]_ie_i=0$ for all $i \in[1,r]$, if and only if $2^{\mathsf v_2(b)}g =0$ (under the convention that $2^\infty g = \infty g =0$).

\smallskip

For any sequence $S = g_1\cdot \ldots g_\ell \in \mc F(G)$, where $\ell\in \N$ and $g_1, \ldots, g_\ell \in G$, we define
\[
\mathsf d(S) = \min \{\mathsf d_1(S), \ldots, \mathsf d_r(S)\}, \quad \text{where}\quad \mathsf d_i(S) =\min\{\mathsf v_2([g_j]_i )\colon  j \in[1,\ell]\bigr\} \ \text{ for all } \ i \in [1,r],
\]
\[
\mathsf m(S) = \max\{ i \in [1,r]\colon \mathsf   \mathsf d_i(S) =\mathsf d(S)\},
\]
\[
\mc N(S) = \{j \in [1,\ell] \colon \mathsf v_2([g_j]_{\mathsf m(S)}) = \mathsf d(S)\},\quad \text{and}
\]
\[
\mc I(S) = \{i \in [1,r] \colon t_i-\mathsf d_i(S) >t_{\mathsf m(S)}-\mathsf d(S)\} .
\]
By definition, $\mathsf d_{\mathsf m(S)}(S) = \mathsf d(S)$, and therefore $\mathsf m(S) \notin \mc I(S)$. If $i \in [1,\mathsf m(S)]$, then $t_i-\mathsf d_i(S) \le t_{\mathsf m(S)} - \mathsf d(S)$, hence $i \notin \mc I(S)$, and consequently $\mc I(S) \subset [\mathsf m(S) +1,r]$. For every $p \in \N$, it follows from the very definition that $\mathsf d_i(S^p) = \mathsf d_i(S)$ for all $i \in [1,r]$, \ $\mathsf d(S^p) = \mathsf d(S)$, \ $\mathsf m(S^p) = \mathsf m(S)$, \ $|\mc N(S^p)| = p|\mc N(S)|$, and $\mc I(S^p) = \mc I(S)$.
Moreover, if $S = 0^\ell$ (equivalently, $\supp(S) = \{0\}$), then $\mathsf d_i(S) = \mathsf d(S) = \infty$ for all $i \in [1,r]$, and $\mc I(S) = \emptyset$.

\smallskip

Starting with a sequence $S = g_1\cdot \ldots g_\ell \in \mc F(G)$, where $\ell\in
\N$ and $g_1, \ldots, g_\ell \in G$, we define recursively a sequence $(S_\lambda)
_{\lambda \ge 1}$ in $\mc F(G)$ by $S_1=S$ and, for $\lambda \ge 2$,
\[
S_\lambda  = \prod_{j=1}^\ell \Bigl(\sum_{i \in \mc I(S_{\lambda-1})}[g_j]_ie_i \Bigr)= \prod_{j=1}^\ell g_j^{(\lambda)}, \ \text{ where } \ [g_j^{(\lambda)}]_i = \begin {cases}
 \,[g_j]_i &\text{ if } \ i \in\mc I(S_{\lambda-1}),\\ \ 0&\text{ if } \ i \notin\mc I(S_{\lambda-1}) \,.
\end{cases}
\]
Consequently,
\[
\mathsf d_i(S_\lambda) = \begin {cases}\,\mathsf d_i(S) &\text{if } \ i \in \mc I(S_{\lambda -1}),\\
\ \infty &\text{if } \ i \notin \mc I(S_{\lambda -1})
\end{cases}\quad \text{and } \ \mathsf d(S_\lambda) \ge \mathsf d(S)
\quad \text{for all } \ i \in [1,r] \ \text{ and } \ \lambda \ge 2.
\]
Apparently, $S = 0^\ell$ implies $S_\lambda = 0^\ell$ for all $\lambda \in \N$.
We continue with the following assertion.

\smallskip

{\bf A0.} Let $S = g_1\cdot \ldots g_\ell \in \mc F(G)$, where $\ell\in \N$ and
$g_1, \ldots, g_\ell \in G$, and let $\lambda \in \N$ with $\mc I(S_{\lambda}) \ne \emptyset$. Then $\mc I(S_{\lambda+1}) \subsetneq \mc I(S_\lambda)$ and $\mathsf m (S_{\lambda + 1}) \in \mathcal I (S_{\lambda}) \setminus \mathcal I (S_{\lambda+1})$.

\smallskip

{\it Proof of}\, {\bf A0}. If $i \in [1,r] \setminus \mc I(S_\lambda)$, then $[g_j]_i =0$ for all $j \in [1,\ell]$, hence $\mathsf d_i(S_{\lambda+1}) = \infty$ and thus $i \notin \mc I(S_{\lambda+1})$. It follows that $\mc I(S_{\lambda+1})\subset \mc I(S_\lambda)$, and as $\mathsf m(S_{\lambda+1}) \notin \mc I(S_{\lambda +1})$, it suffices to observe that $\mathsf m(S_{\lambda+1})\in \mc I(S_\lambda)$. Indeed, if $\mathsf m(S_{\lambda+1}) \notin \mc I(S_\lambda)$, then $\infty = \mathsf d_{\mathsf m(S_{\lambda +1}}) = \mathsf d(S_\lambda)$, hence $S_\lambda =0^\ell$ and $\mc I(S_\lambda) = \emptyset$, a contradiction.
\qed[{\bf A0.}]

\smallskip

{\bf A0}\, justifies the following definition. If $S \in \mc F(G)$, \ $|S|= \ell \ge 1$ and $S \ne 0^\ell$, then we set
\[
\mathsf t(S) = \min\{ \lambda \in \N \colon \mc I(S_\lambda) = \emptyset\}.
\]
We proceed with the following three claims.

\smallskip

{\bf A1.}  Let $\tau\in \Aut(G)$ and $s\in [1,r]$. Then  $2 \nmid [\tau(e_s)]_s$, and $2^{t_i-t_{s}} \t [\tau(e_s)]_i$ for every $i\in [s+1,r]$.

{\it Proof of {\bf A1.}} Since \,$\ord(\tau(e_s)) = \ord(e_s) = 2^{t_s}$, it follows that $2^{t_s}[\tau(e_s)]_i e_i = 0$ for all $i \in [1,r]$, and therefore $2^{t_i-t_s}\mid[\tau(e_s)]_i$ for all $i \in [s+1,r]$. Assume to the contrary  that $2 \t [\tau(e_s)]_s$. Let $\alpha=[\tau (e_s)]_1 e_1+\ldots+[\tau (e_s)]_{s-1}e_{s-1}$	 and $\beta=\frac{1}{2}([\tau (e_s)]_{s}e_{s}+\ldots+[\tau (e_s)]_re_r)$.
Then $e_{s}=\tau^{-1}(\alpha+2\beta)=\tau^{-1}(\alpha)+2\tau^{-1}(\beta)$. Since $\ord(\alpha)<2^{t_{s}}$, we obtain that $[\tau^{-1}(\alpha)]_s$ is even and hence $1=[e_{s}]_s= [\tau^{-1}(\alpha)]_s+2[\tau^{-1}(\beta)]_s$ is even, a contradiction.
\qed({\bf A1})
	
\smallskip

{\bf A2.} Let $S = g_1\cdot \ldots g_\ell \in \mc F(G)$, where $\ell\in \N$, \
$g_1, \ldots, g_\ell \in G$, and $S \ne 0^\ell$, and let $\tau\in \Aut(G)$, \ $\lambda
\in [ 1,\mathsf t(S)]$, and $j \in [1,\ell]$. We set $d = \mathsf d(S_\lambda)$ and $m = \mathsf m(S_\lambda)$. Then we have

\begin{itemize}
\item[(a)] $\ord([g_j]_ie_i )< \ord (2^de_m)$ for all $i \in [1,m-1]$, and $2^{d+1} \t [g_j]_i$ for all $i \in [m+1,r]$.

\item[(b)] $[g_j]_{m} \equiv [\tau(g_j)]_{m}\mmod 2^{d+1}$.
\end{itemize}

{\it Proof of {\bf A2.}}

(a) If $i \in [m+1,r]$, then
$2^{d+1} \t [g_j]_i$ by the very definition of $\mathsf m(S_\lambda)$. Thus, let $i \in [1,m-1]$. If $\lambda =1$, or if $\lambda \ge 2$ and $i \in \mc I(S_{\lambda -1})$, then $2^d \t [g_j]_i$, and therefore \,$\ord([g_j]_ie_i) \le \ord(2^de_i)< \ord(2^de_m)$.
If $\lambda \ge 2$ and $i \notin \mc I(S_{\lambda -1})$, then
\[
\ord([g_j]_ie_i) = 2^{t_i- \mathsf v_2([g_j]_i)} \overset{(i)}{\le} 2^{t_i-\mathsf d_i(S_{\lambda-1})} \overset{(ii)}{\le} 2^{t_{\mathsf m(S_{\lambda-1})}-\mathsf d(S_{\lambda-1})} \overset{(iii)}{\le} 2^{t_m-d} = \ord(2^d e_m) \,;
\]
note that (i) follows from the definition of $\mathsf d_i (S_{\lambda-1})$, (ii) follows from the definition of $\mathcal I (S_{\lambda-1})$, and (iii) holds because $m = \mathsf m (S_{\lambda}) \in \mathcal I (S_{\lambda-1})$ by {\bf A0}.

(b) By (a), we obtain a decomposition $g_j = \alpha + [g_j]_me_m + 2^{d+1}\beta$, where $\alpha \in \langle e_1, \ldots, e_{m-1}\rangle$, \ $\ord(\alpha)< \ord(2^de_m)$ and $\beta \in \langle e_{m+1}, \ldots, e_r\rangle$. From this, we obtain
\[
[\tau(g_j)]_m = [ \tau(\alpha)]_m + [g_j]_m[\tau(e_m)]_m + 2^{d+1}[\tau(\beta)]_m.
\]
Since \,$\ord(\alpha)< \ord(2^de_m)$, it follows that $\ord([\tau(\alpha)]_me_m) \le \ord(\tau(\alpha)) = \ord(\alpha) \le \ord(2^{d-1}e_m)$, hence $2^{d+1} \t [\tau(\alpha)]_m$, and we arrive at the conguence $[\tau(g_j)]_m \equiv [g_j]_m [\tau(e_m)]_m \mmod 2^{d+1}$. Since $2 \nmid [\tau(e_m)]_m$ by \,{\bf A1}\, and $2^d \t [g_j]_m$, it follows that $[g_j]_m [\tau(e_m)]_m \equiv [g_j]_m \mmod 2^{d+1}$, and we are done. \qed[{\bf A2}]

\medskip

{\bf A3.} Let $S \in \mc F(G)$ with $|S|=\ell \ge 1$ and $S \ne 0^\ell$. Then $S\in \mathcal B_{\Gamma}(G)$ if and only if $|\mc N(S_\lambda)|$ is even for all  $\lambda \in [1,\mathsf t(S)]$.

{\it Proof of {\bf A3.}} Let first $S = g_1 \cdot \ldots \cdot g_{\ell} \in \mathcal B_{\Gamma}(G)$, where $g_1, \ldots, g_{\ell} \in G$, and let $\tau_1, \ldots, \tau_\ell \in \Aut(G)$ be such that $\tau_1(g_1) + \ldots + \tau_\ell (g_\ell)=0$. Let $\lambda \in [1,\mathsf t(S)]$, \ $d = \mathsf d(S_\lambda)$ and $m = \mathsf m(S_\lambda)$. By the very definition of $\mc N(S_\lambda)$, we obtain
\[
2^{-d}[g_j]_m \equiv \begin {cases}\,1\mmod 2 &\text{if } \ j \in \mc N(S_\lambda),\\
\,0 \mmod 2 &\text{if } \ j \notin \mc N(S_\lambda),
\end{cases}\quad \text{whence} \quad [g_j]_m \equiv \begin {cases}\,2^d\mmod 2^{d+1} &\text{if } \ j \in \mc N(S_\lambda),\\
\ 0 \mmod 2^{d+1} &\text{if } \ j \notin \mc N(S_\lambda),
\end{cases}
\]
and by {\bf A2} it follows that
\[
0 = \Bigl[\sum_{j=1}^\ell \tau(g_j)\Bigr]_m= \sum_{j=1}^\ell [\tau(g_j)]_m \equiv \sum_{j=1}^\ell [g_j]_m \equiv 2^d|\mc N(S_\lambda)|\negthickspace \mmod 2^{d+1}
\]
and therefore $|\mc N(S_\lambda)|\equiv 0\mmod 2$.

For the converse, we proceed by induction on $\mathsf t(S)$ over all sequences $S \in \mc F(G)$ with $|S| =\ell \ge 1$ and $S \ne 0^\ell$. Let $S = g_1\cdot \ldots \cdot g_\ell$ be such a sequence, and suppose that $n_\lambda=|\mc N(S_\lambda)|\equiv 0 \mmod 2$ for all $\lambda \in [1,\mathsf t(S)]$.

Suppose $\mathsf t(S)=1$. Let $d_i=\mathsf d_i(S)$ for all $i \in [1,r]$, \ $d=\mathsf d(S)$, \  $m=\mathsf m(S)$, and $2n=|\mc N(S)|$. After renumbering if necessary, we may assume that $2^{-d}[g_j]_m$ is odd for all $j\in [1,2n]$ and $2^{-d}[g_j]_m$ is even for all $j\in [2n+1,\ell]$. For $j \in [1,\ell]$, we set
\[
\alpha_j = 2^{-d}(g_j - [g_j]_me_m) =  \sum_{\substack{j=1\\ j\ne m}}^r 2^{-d}[g_j]_ie_i \in \langle e_1, \ldots, e_{m-1},e_{m+1}, \ldots, e_r\rangle.
\]
Since $\mc I(S)=\emptyset$, we obtain $\ord([g_j]_ie_i) \le 2^{t_i-d_i} \le 2^{t_m-d}$, whence $\ord(2^{-d}[g_j]_ie_i) \le 2^{t_m}$ for all $i \in [1,r]$ and therefore $\ord(\alpha_j) \le 2^{t_m}$. For every odd $k \in\N$, the tuple $(e_1,\ldots, e_{m-1}, ke_m+\alpha_j, e_{m+1}, \ldots, e_r)$ is a basis of $G$ by Lemma \ref{lemma}. Hence there exists some $\tau \in \Aut(G)$ such that $\tau(ke_m+\alpha_j)=e_m$. If $k= 2^{-d}[g_j]_m$ for $j \in [1,2n-1]$, then $ke_m+\alpha_j= 2^{-d}g_j$. Next we set $\alpha = \alpha_{2n} + \ldots + \alpha_\ell$. Then again $\ord(\alpha)\le 2^{t_m}$, \ $k= 2^{-d}([g_{2n}]_m+\ldots + [g_{\ell}]_{2n})$ is odd, $ke_m+\alpha = 2^{-d}(g_{2n}+ \ldots+g_\ell)$, and there exists some $\tau \in \Aut(G)$ such that $\tau(ke_m+\alpha)=e_m$. By these arguments, there exist
$\tau_1,\ldots, \tau_{2n}\in \Aut(G)$ such that
	\begin{align*}
		\tau_{2j-1}(g_j)&=2^d\tau_{2j-1}(2^{-d}g_j)=2^de_m \text{ for all }j\in [1,n]\,,\\
		\tau_{2j}(g_j)&=2^d\tau_{2j}(2^{-d}g_j)=-2^de_m \text{ for all }j\in [1,n-1]\,,\\
		\text{and } \qquad\tau_{2n}(g_{2n}+\ldots+g_{\ell})&=2^d\tau_{2n}(2^{-d}(g_{2n}+\ldots+g_{\ell}))=-2^de_m\,,
\end{align*}
	
It follows that
\[
\sum_{j=1}^n \tau_{2j-1}(g_j) +\sum_{j=1}^{n-1} \tau_{2j}(g_{2j}) +
\sum_{j=2n}^\ell \tau_{2n}(g_j) =0,
\]
and therefore $S = g_1\cdot\ldots \cdot g_\ell\in \mathcal B_{\Gamma}(G)$.

Suppose that $\mathsf t(S)>1$ and assume that the assertion holds for $\mathsf t(S) -1$. Then $\mathcal I(S)\neq \emptyset$. Let $I_1=[1,r]\setminus \mathcal I(S)$ and $I_2=\mc I(S)$. Then $G=G_1\oplus G_2$, where $G_1=\langle e_i\colon i\in I_1\rangle$ and $G_2=\langle e_i\colon i\in I_2\rangle$. For $i \in \{1,2\}$ let $\varphi_i\colon G\rightarrow G_i$ be the projection. Then $\mathsf t(\varphi_1(S))=1$ and $\mathsf t(\varphi_2(S))=\mathsf t(S)-1$. By the induction hypothesis,  there exist $\mu_1,\ldots,\mu_{\ell}\in \Aut(G_1)$ and $\nu_1,\ldots,\nu_{\ell}\in \Aut(G_2)$ such that
\[
\sum_{j=1}^\ell \mu_j(\varphi_1(g_j)) = \sum_{j=1}^\ell \nu_j(\varphi_2(g_j))=0.
\]
For $j \in [1,\ell]$, let $\tau_j\in \Aut(G)$ such that $\tau_j\mid_{G_1}=\mu_j$ and $\tau_j\mid_{G_2}=\nu_j$. Then $\tau_j(g_j)=\mu_j(\varphi_1(g_j))+\mu_j(\varphi_2(g_j))$ for all $i\in [1,\ell]$, whence $\tau_1(g_1)+\ldots+ \tau_{\ell}(g_{\ell}) =0$ and therefore $S\in \mathcal B_{\Gamma}(G)$.	\qed({\bf A3})

\medskip

To complete the proof that $\mathcal B_{\Gamma} (G)$ is seminormal, let $S\in (\mathcal B_{\Gamma}(G))' \subset \mathcal F (G)$ be given. We need to   prove that $S\in \mathcal B_{\Gamma}(G)$. By Lemma \ref{4.2}, there exists an odd $k\in \N$ such that $S^k\in \mathcal B_{\Gamma}(G)$.
By {\bf A3}, \ $|\mc N(S_\lambda^k)|=k|\mc N(S_\lambda)|$ is even for all $\lambda\in [1,\mathsf t(S^k)]=[1,\mathsf t(S)]$, whence $|\mc N(S_\lambda)|$ is even for all $\lambda\in [1,\mathsf t(S)]$, and, again by {\bf A3} we obtain $S\in \mathcal B_{\Gamma}(G)$.

\smallskip

(b)\, We start with some preliminary considerations. If $g \in G$, then \,$\ord(g) = 2^{t_r}$ if and only if $2 \nmid [g]_r$.

Let $S \in \mc F(G)$ and $\mathsf n(S) >0$. Then $\mathsf d_r(S) = \mathsf d(S) =
0$, \ $\mathsf m(S) =r$, and $|\mc N(S)| = \mathsf n(S)$. Since $t_r -\mathsf d(S) \ge t_i -\mathsf d_i(S)$ for all $i \in [1,r]$, it follows that $\mc I(S) = \emptyset$ and $\mathsf t(S)=1$.

Now we can do the actual proof. Let $S \in \mathcal F (G)$ be such that  $\mathsf n(S)$ is even. Since $\mathsf m (e_r^2S)=r$, we obtain that $\mathcal I (e_r^2S)=\emptyset$, whence $\mathsf t(e_r^2S)= 1$. Since $\mathsf n(e_r^2S)= 2+\mathsf n(S)$ is even and  $\mathsf t(e_r^2S)= 1$, \,{\bf A3}\, implies that $e_r^2S \in \mc B_\Gamma(G)$. Since $S^2 \in \mathcal B_{\Gamma} (G)$, it follows that $e_r^2S^k  \in \mathcal B_{\Gamma} (G)$ for all $k \in \N$,  and consequently $S \in \widehat{\mc B_\Gamma(G)}$. Conversely, if $S \in \widehat{\mc B_\Gamma(G)}$, then there exists some $S_0 \in \mc B_\Gamma(G)$ such that $SS_0 \in \mc B_\Gamma(G)$. Then $\mathsf n(SS_0) = \mathsf n(S) +\mathsf n(S_0)$ and $\mathsf n(S_0)$ are both even by {\bf A3}, and thus $\mathsf n(S)$ is even.
\end{proof}

\smallskip
We continue with the preparation for what is needed to describe the class semigroup of  seminormal monoids of weighted zero-sum sequences.
Let $G$ be a finite abelian group and let $\Gamma\subset \Aut(G)$  be a  subgroup. Then $\mathcal B (G) \subset \mathcal B_{\Gamma} (G) \subset \mathcal F (G)$. The monoid $\mathcal B (G)$ is a Krull monoid and a C-monoid. It is factorial if and only if $|G| \le 2$ and if $|G| \ge 3$, then $\mathcal C ( \mathcal B (G), \mathcal F (G) ) =  \mathcal C ( \mathcal B (G) ) \cong G$, where $\mathcal C \big( \mathcal B (G) \big)$ is the (usual) class group of the Krull monoid.
The monoid $\mathcal B_{\Gamma} (G)$ is a C-monoid by Proposition \ref{5.1}. Since $\mathcal F (G)$ is reduced, we have \,$\mathcal C^* ( \mathcal B_{\Gamma} (G), \mathcal F (G)) = \mathcal C( \mathcal B_{\Gamma} (G), \mathcal F (G) )$, and, by \cite[Theorem 2.9.11]{Ge-HK06a}, there is an epimorphism\, $\mathcal C ( \mathcal B_{\Gamma} (G), \mathcal F (G) \  \rightarrow \ \mathcal C ( \widehat{\mathcal B_{\Gamma} (G)})$.

\smallskip

We refer to \cite{Gr01} for the basics on commutative semigroups. Let $\mathcal C$ be an additively written, finite, commutative semigroup with zero element. Let $\mathsf E (\mathcal C)$ be the set of idempotents of $\mc C$, endowed
with the Rees order, defined by $e \le f$ if $e+f=e$. If $\mathsf E (\mathcal C) = \{0=e_0, \ldots, e_n \}$, then  $0=e_0$ is the largest and $e_0 + \ldots + e_n$ is the smallest element of $\mathsf E(\mc C)$ in the Rees order. For every $e\in \mathsf E (\mathcal C)$, we denote by $\mathcal C_e$  the set of all $x \in \mathcal C$ such that $x+e=x$ and $x+y=e$ for some $y \in \mathcal C$. Then $\mathcal C_e$ is a group with identity element $e$, called the {\it constituent group} of $e$. In particular, $\mc C_0 = \mc C^\times$ is the group of invertible elements of $\mc C$. If $e,\,f \in\mathsf E(\mc C)$ and $e \ne f$, then $\mc C_e \cap \mc C_f = \emptyset$. Every subgroup $G$ of $\mc C$ is contained in a unique constituent group $\mc C_e$. This shows that $\mathcal C$ is a union of groups if and only if
\begin{equation}
\mathcal C = \biguplus_{e \in \mathsf E (\mathcal C)} \mathcal C_e ,
\end{equation}
and, if this is the case, then $\mc C$ is called a \,{\it Clifford semigroup}.

\smallskip
\begin{lemma} \label{5.3}
Let $G$ be a finite abelian group, $|G|>1$ and let $\Gamma\subset \Aut(G)$ be a subgroup. Suppose that $S_1,S_2\in \mathcal F(G)  \setminus \{1\}$, and for $i \in \{1,2\}$ let $[S_i] = [S_i]^{\mc F(G)}_{\mc B_\Gamma(G)}\in \mathcal C (\mathcal B_{\Gamma}(G),\mathcal F(G) )$.

\begin{enumerate}
\item
$[S_1]=[S_2]$ if and only if $\sigma_{\Gamma}(S_1)=\sigma_{\Gamma}(S_2)$.

\smallskip
		
\item
$[S_1]$ is an idempotent in $\mathcal C (\mathcal B_{\Gamma}(G),\mathcal F(G) )$ if and only if $\sigma_{\Gamma}(S_1)$ is a subgroup of \,$G$. If this is the case, then $S_1\in \mathcal B_{\Gamma}(G)$.
\end{enumerate}
\end{lemma}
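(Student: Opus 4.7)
My plan is to reduce both parts of the lemma to a single identity and an elementary sumset fact. First I would record that applying weights independently in the concatenation $TS$ gives
\[
\sigma_\Gamma(TS) \;=\; \sigma_\Gamma(T) + \sigma_\Gamma(S) \qquad \text{for all } T,\,S \in \mc F(G),
\]
so that $TS \in \mc B_\Gamma(G)$ if and only if $\sigma_\Gamma(T) \cap \bigl(-\sigma_\Gamma(S)\bigr) \ne \emptyset$. Because $\Gamma$ is a subgroup of $\Aut(G)$, the set $\sigma_\Gamma(S)$ is $\Gamma$-invariant: for $\tau \in \Gamma$ one has $\tau\bigl(\sum_i \gamma_i(g_i)\bigr) = \sum_i (\tau\gamma_i)(g_i) \in \sigma_\Gamma(S)$. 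Hence $-\sigma_\Gamma(S)$ is $\Gamma$-invariant as well.

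For part (1), the implication $\sigma_\Gamma(S_1) = \sigma_\Gamma(S_2) \Rightarrow [S_1] = [S_2]$ is immediate from the above characterization of $TS_i \in \mc B_\Gamma(G)$, since the membership condition becomes independent of $i$ for every $T \in \mc F(G)$. The converse is the one subtle point of the plan: I would probe $H$-equivalence with length-one test sequences, taking $T$ to be the sequence $-g$ for each $g \in G$. Then $\sigma_\Gamma(T) = \Gamma\cdot(-g)$ is a full $\Gamma$-orbit, and the $\Gamma$-invariance of $-\sigma_\Gamma(S_i)$ collapses the intersection test $\Gamma\cdot(-g) \cap \bigl(-\sigma_\Gamma(S_i)\bigr) \ne \emptyset$ to the single membership $-g \in -\sigma_\Gamma(S_i)$, i.e., $g \in \sigma_\Gamma(S_i)$. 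Varying $g$ over $G$ under the hypothesis $[S_1] = [S_2]$ then yields $\sigma_\Gamma(S_1) = \sigma_\Gamma(S_2)$.

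For part (2), combining part (1) with the sumset identity shows that $[S_1]$ is idempotent if and only if $\sigma_\Gamma(S_1) + \sigma_\Gamma(S_1) = \sigma_\Gamma(S_1)$. It remains to observe that a nonempty finite subset $A \subset G$ satisfying $A + A = A$ must be a subgroup of $G$: for any $a \in A$, iterating the hypothesis gives $ka \in A$ for every $k \ge 1$, and finiteness of $G$ yields $na = 0$ for some $n \ge 1$, whence $0 = na \in A$ and $-a = (n-1)a \in A$; closure under addition is the hypothesis itself. The converse is obvious. The concluding claim $S_1 \in \mc B_\Gamma(G)$ follows at once, because any subgroup of $G$ contains $0$. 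The only genuinely subtle step in this plan is the forward direction of (1), where recognizing that $\Gamma$-invariance lets length-one probes suffice is essential; the rest is bookkeeping with the sumset identity.
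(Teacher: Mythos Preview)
Your proposal is correct and follows essentially the same route as the paper: both directions of (1) rest on the sumset identity $\sigma_\Gamma(TS)=\sigma_\Gamma(T)+\sigma_\Gamma(S)$ and the use of length-one probes $T=(-g)$, with the group hypothesis on $\Gamma$ supplying exactly the $\Gamma$-invariance (equivalently, the passage from $\gamma(g)\in\sigma_\Gamma(S_2)$ back to $g\in\sigma_\Gamma(S_2)$) that the paper invokes via $\gamma^{-1}$; part (2) is then deduced from (1) in both arguments, and your explicit verification that a nonempty finite $A\subset G$ with $A+A=A$ is a subgroup just fills in what the paper leaves implicit.
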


\begin{proof}
1. Assume first that $\sigma_{\Gamma}(S_1)=\sigma_{\Gamma}(S_2)$, and let $S\in \mathcal F(G)$. By symmetry, it suffices to prove that $SS_1\in \mathcal B_{\Gamma}(G)$ implies $SS_2\in \mathcal B_{\Gamma}(G)$. If $SS_1\in \mathcal B_{\Gamma}(G)$, then
\[
0\in \sigma_{\Gamma}(SS_1)=\sigma_{\Gamma}(S)+\sigma_{\Gamma}(S_1)=\sigma_{\Gamma}(S)+\sigma_{\Gamma}(S_2)=\sigma_{\Gamma}(SS_2),
\]
and therefore $SS_2\in \mathcal B_{\Gamma}(G)$.

	Suppose that $\sigma_{\Gamma}(S_1)\neq\sigma_{\Gamma}(S_2)$. By symmetry, we may assume that  there exists $g\in G$ such that $g\in \sigma_{\Gamma}(S_1)\setminus\sigma_{\Gamma}(S_2)$. Therefore
	$S_1(-g)\in \mathcal B_{\Gamma}(G)$. If  $S_2(-g)\in \mathcal B_{\Gamma}(G)$, then there exists $\gamma\in \Gamma$ such that $\gamma(g)\in \sigma_{\Gamma}(S_2)$. Since $\Gamma$ is a group, we have $g=\gamma^{-1}(\gamma(g))\in \sigma_{\Gamma}(S_2)$, a contradiction. Thus $S_2(-g)\not\in \mathcal B_{\Gamma}(G)$, and therefore $[S_1]\neq [S_2]$.

\smallskip	

2. Let first $[S_1]$ be an idempotent in $\mathcal C(\mathcal B_{\Gamma}(G),\mathcal F(G))$. Then $[S_1] =2[S_1] =[S_1^2]$, and therefore  $\sigma_{\Gamma}(S_1)=\sigma_{\Gamma}(S_1^2)=\sigma_{\Gamma}(S_1)+\sigma_{\Gamma}(S_1)$. It follows that $\sigma_{\Gamma}(S_1)$ is a group, whence $0\in \sigma_{\Gamma}(S_1)$ and thus $S_1\in \mathcal B_{\Gamma}(G)$.	

Suppose now  that $\sigma_{\Gamma}(S_1)$ is a group. Then $\sigma_{\Gamma}(S_1)
=\sigma_{\Gamma}(S_1)+\sigma_{\Gamma}(S_1)=\sigma_{\Gamma}(S_1^2)$, and therefore $[S_1] = [S_1^2] =2[S_1]$ is an idempotent in $\mathcal C(\mathcal B_{\Gamma}(G),\mathcal F(G))$.
\end{proof}

\smallskip
\begin{theorem}\label{5.5}
Let $G = C_2^r\oplus C_4^t$, where $r\in \N_0$ and $t\in \N$. We set $\mc C = \mc C(\mathcal B_{\pm}(G),\mathcal F(G) )$, and for $S \in \mc F(G)$, we set $[S] = [S]^{\mc F(G)}_{\mc B_\pm(G)} \in\mathcal C$.

\smallskip

\begin{enumerate}
\item
Let $S \in \mc F(G)$. Then the following statements are equivalent.

\smallskip

\begin{enumerate}
\item
$[S] \in \mathsf E(\mc C)$.

\smallskip

\item
$S \in \mc B_\pm(G)$.

\smallskip

\item
$\sigma_\pm(S)$ is a subgroup of $2G$.
\end{enumerate}

\smallskip

\item
Let $S_1,\,S_2 \in \mc B_\pm(G)$. Then $[S_1] \le [S_2] \ ($in the Rees order of $\mathsf E(\mc C)\,)$ if and only if $\sigma_\pm(S_1) \supset \sigma_\pm(S_2)$. Moreover, $\mathcal C_{[S_1]} = \mathcal C_{[S_2]}$ if and only if $\sigma_\pm(S_1) = \sigma_\pm(S_2)$.

\smallskip

\item
$\mc C$ is a Clifford semigroup and $\{\mc C_{[S]} \colon S \in \mc B_\pm(G)\}$ is the set of its constituent groups.

\smallskip

\item
Let $G_1\subset 2G$ be a subgroup, let $g_1, \ldots, g_s \in G$ such that $(2g_1, \ldots, 2g_s)$ is a basis of $G_1$, and set $S_0 = g_1^2\cdot \ldots \cdot g_s^2 \in \mc B_\pm(G)$. Then
\[\qquad \quad
\sigma_\pm(S_0) = G_1 \quad \text{and} \quad\mc C_{[S_0]} = [S_0] +\bigl\{[g] \colon g \in G \ \text{ such that } \ 2g \in G_1\bigr\} \,\cong\, C_2^{r+t}.
\]
\end{enumerate}
\end{theorem}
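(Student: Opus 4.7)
The plan is to lean on Lemma~\ref{5.3} throughout together with the identity \eqref{5.2.2}, which gives
\[
\sigma_\pm(S) = \sigma(S) + A(S), \qquad A(S) := \sum_{\ord(g_i)=4}\{0,\,2g_i\},
\]
for every $S = g_1\cdot \ldots \cdot g_\ell \in \mc F(G)$. Since $\exp(G)\t 4$, each nonzero $2g_i$ has order $2$, so $A(S)$ is a subgroup of the elementary $2$-group $2G$. This single observation drives all four parts.

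For part (1), Lemma~\ref{5.3}.2 immediately gives (a) $\Leftrightarrow$ ``$\sigma_\pm(S)$ is a subgroup of $G$'' as well as the implication (a) $\Rightarrow$ (b). The refinement to a subgroup of $2G$ comes from the displayed identity: whenever $0 \in \sigma_\pm(S)$ one has $\sigma(S) \in -A(S) = A(S)$, hence $\sigma_\pm(S) = A(S) \subset 2G$. Conversely, a subgroup of $2G$ always contains $0$, so $S \in \mc B_\pm(G)$. This closes (a) $\Leftrightarrow$ (b) $\Leftrightarrow$ (c). Part (2) is then a direct unpacking: $[S_1] \le [S_2]$ means $[S_1 S_2] = [S_1]$, which by Lemma~\ref{5.3}.1 reads $\sigma_\pm(S_1) + \sigma_\pm(S_2) = \sigma_\pm(S_1)$; since $\sigma_\pm(S_2)$ is a subgroup containing $0$ by part (1), this collapses to $\sigma_\pm(S_2) \subset \sigma_\pm(S_1)$. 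Because $[S_i]$ is the identity of $\mc C_{[S_i]}$, the equality $\mc C_{[S_1]} = \mc C_{[S_2]}$ is equivalent to $[S_1] = [S_2]$, which by Lemma~\ref{5.3}.1 is $\sigma_\pm(S_1) = \sigma_\pm(S_2)$.

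Part (3) is an appeal to the result of \cite{Ge-Zh19c} cited in the introduction to this section -- the class semigroup of a C-monoid is Clifford iff the monoid is seminormal -- combined with Theorem~\ref{5.2}.2, which provides seminormality of $\mc B_\pm(G)$ when $\exp(G) \t 4$. Part (1) then identifies the idempotents as exactly $\{[S] \colon S \in \mc B_\pm(G)\}$, giving the asserted indexing of constituent groups.

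The main work is in part (4). The identity $\sigma_\pm(S_0) = \langle 2g_1, \ldots, 2g_s \rangle = G_1$ is a direct expansion, since each $g_i^2$ contributes $\{-2g_i, 0, 2g_i\} = \{0, 2g_i\}$. I will then set $H := \{g \in G \colon 2g \in G_1\}$ and show that $g \mapsto [S_0 g]$ defines a surjective group homomorphism $H \to \mc C_{[S_0]}$ with kernel $G_1$. For $g \in H$, the equalities $\sigma_\pm(S_0 g) = G_1 + \{g, -g\} = g + G_1$ and $\sigma_\pm\bigl(S_0^2 g(-g)\bigr) = G_1$ place $[S_0 g]$ into $\mc C_{[S_0]}$ with inverse $[S_0(-g)]$; the homomorphism property $[S_0 g_1] + [S_0 g_2] = [S_0(g_1 + g_2)]$ follows from Lemma~\ref{5.3}.1 since both sides have $\sigma_\pm$-image equal to the coset $(g_1 + g_2) + G_1$ (using $2g_1, 2g_2 \in G_1$). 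For surjectivity, given $[T] \in \mc C_{[S_0]}$, the identity condition $[T] + [S_0] = [T]$ forces $\sigma_\pm(T)$ to be a union of $G_1$-cosets, while the existence of an inverse $[T']$ with $\sigma_\pm(T) + \sigma_\pm(T') = G_1$ confines it to a single coset $g + G_1$; the symmetry $\sigma_\pm(T) = -\sigma_\pm(T)$ then yields $2g \in G_1$, and Lemma~\ref{5.3}.1 gives $[T] = [S_0 g]$. The kernel is $G_1$, since $[S_0 g] = [S_0]$ iff $g + G_1 = G_1$. Finally, $g \mapsto 2g$ sends $H$ onto $G_1$ with kernel $G[2] \cong C_2^{r+t}$, so $|\mc C_{[S_0]}| = |H/G_1| = |G[2]| = 2^{r+t}$, and since $H/G_1 \subset (G/G_1)[2]$ is elementary abelian, $\mc C_{[S_0]} \cong C_2^{r+t}$. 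The hard part will be this surjectivity step, which requires simultaneously invoking the idempotent identity and the group-inverse condition to squeeze $\sigma_\pm(T)$ into a single coset of $G_1$, and then using the inherent symmetry $\sigma_\pm(T) = -\sigma_\pm(T)$ to extract $2g \in G_1$.
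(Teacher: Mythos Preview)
Your argument is correct and tracks the paper's proof closely in parts (1)--(3); your observation that $A(S) = \sum_{\ord(g_i)=4}\{0,2g_i\}$ is automatically a subgroup of $2G$ (so $\sigma_\pm(S)$ is always a coset of a subgroup of $2G$) makes the cycle (a)$\Leftrightarrow$(b)$\Leftrightarrow$(c) in part (1) particularly transparent.

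In part (4) the two proofs diverge slightly on the surjectivity/description step. The paper takes $[S] \in \mc C_{[S_0]}$ and uses the \emph{squaring} idempotent: since $[S^2] = 2[S]$ is an idempotent lying in $\mc C_{[S_0]}$, it must equal $[S_0]$, so $\sigma_\pm(S^2) = G_1$; computing $\sigma_\pm(S^2) = A(S)$ explicitly then yields $A(S)=G_1$ and $2\sigma(S)\in G_1$, from which $\sigma_\pm(S) = \sigma(S)+G_1$ and $[S] = [\sigma(S)S_0]$. You instead invoke the \emph{inverse} $[T']$ in the constituent group: from $\sigma_\pm(T)+G_1=\sigma_\pm(T)$ and $\sigma_\pm(T)+\sigma_\pm(T')=G_1$ you squeeze $\sigma_\pm(T)$ into a single $G_1$-coset, and the built-in symmetry $\sigma_\pm(T)=-\sigma_\pm(T)$ extracts $2g\in G_1$. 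Both routes are valid; yours is a clean abstract argument that never needs to name $\sigma(T)$ explicitly, while the paper's gives the concrete representative $g=\sigma(S)$ for free. Your homomorphism framing $H \to \mc C_{[S_0]}$, $g \mapsto [S_0g]$, with kernel $G_1$ and the short exact sequence $0 \to G[2] \to H \to G_1 \to 0$ for the cardinality count, is tidier than the paper's separate verification that $g_1+G_1 = g_2+G_1 \Leftrightarrow [g_1]+[S_0]=[g_2]+[S_0]$.
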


\begin{proof}
1. The implications \,(a) $\Longrightarrow$ (b)\, and \,(c) $\Longrightarrow$ (a)\, follow from Lemma \ref{5.3}.2.

\smallskip

(b) $\Longrightarrow$ (c)\, Let $S=g_1\cdot\ldots\cdot g_{\ell}h_1\cdot\ldots\cdot h_{\ell'}\in \mathcal B_{\pm}(G)$, where $\ell,\ell'\in \N_0$, and $g_1,\ldots,g_{\ell},\,h_1, \ldots, h_{\ell'}\in G$ such that $\ord(g_i) \le 2$ for all $i \in [1,\ell]$ and $\ord(h_j)=4$ for all $j \in [1,\ell']$. By Lemma \ref{5.3}.2, $\sigma_{\pm}(S)$ is a subgroup of $G$. Since
	\[
0\in\sigma_{\pm}(S)=g_1+\ldots+g_{\ell}+\{h_1,-h_1\}+\ldots+\{h_{\ell'}, -h_{\ell'}\}=\sigma(S)+\sum_{i=1}^{\ell'}\{0, 2h_i\}\subset \sigma(S)+2G,
	\]
	we obtain $\sigma(S)\in 2G$, and hence $\sigma_{\pm}(S)\subset 2G$.

\smallskip

2. Since $[S_1],\,[S_2] \in \mathsf E(\mc C)$, we obtain, by the very definition of the Rees order, that
\[
\begin{aligned}
 \ [S_1]  \le [S_2]  & \iff [S_1S_2] = [S_1]+[S_2]=[S_1]  \iff  \sigma_\pm(S_1)+\sigma_\pm(S_2) = \sigma_\pm(S_1S_2) = \sigma_\pm(S_1) \\ & \iff \sigma_\pm(S_1) \supset \sigma_\pm(S_2) \,.
\end{aligned}
\]
If $S_1,\,S_2 \in \mc B_\pm(G)$, then $[S_1],\,[S_2] \in \mathsf E(\mc C)$. Hence $\mc C_{[S_1]} = \mc C_{[S_2]}$ is equivalent to $[S_1]=[S_2]$, and by Lemma \ref{5.3}.1 this holds if and only if $\sigma_\pm(S_1) = \sigma_\pm(S_2)$.

\smallskip

3. The monoid $\mathcal B_{\pm}(G)$ is seminormal by Theorem \ref{5.2}.2, and $\mathsf E(\mc C) = \{[S] \colon S \in \mc B_\pm(G) \}$ by 1. Hence, it follows from \cite[Theorem 1.1]{Ge-Zh19c} that $\mc C$ is a Clifford semigroup, and that $\{\mc C_{[S]} \colon S \in \mc B_\pm(G)\}$ is the set of its constituent groups.

\smallskip

4. Apparently, $\sigma_\pm(S_0) =\{0,2g_1\} + \ldots + \{0,2g_s\} = G_1$, whence $S_0 \in \mc B_\pm(G)$ and $[S_0]\in \mathsf E(\mc C)$. Let $g\in G$ such that $2g\in G_1$. Then $([g]+[S_0])+[S_0]=g+[S_0]$, and $([g]+[S_0])+[g] = [S_0]$, since  $\sigma_\pm(g^2S_0)=\{0, 2g\}+G_1=G_1 =\sigma_\pm(S_0)$. Therefore, we obtain $[g]+[S_0]\in \mathcal C_{[S_0]}$, and consequently
	\[
	\{[g]+[S_0]\colon g\in G \text{ such that }2g\in G_1\}\subset \mathcal C_{[S_0]}\,.
	\]
As to the opposite inclusion, let $S = h_1 \cdot \ldots \cdot h_\ell \in \mc F(G)$ (where $\ell \in \N$ and $h_1, \ldots, h_\ell \in G$) such that $[S] \in \mc
 C_{[S_0]}$. Since $S^2 \in \mc B_\pm(G)$ \,(hence $[S^2] \in \mathsf E(\mc C)$) and
$[S^2]= 2[S] \in \mc C_{[S_0]}$, it follows that $[S^2] = [S_0]$ and therefore $2\sigma(S) \in \sigma_\pm( S^2) = \{0,2h_1\}+ \ldots +\{0,2h_\ell\} = \sigma_{\pm}(S_0)=G_1$.

Now $[S] \in \mc C_{[S_0]}$ implies $[SS_0] = [S]+[S_0] = [S]$, and therefore
\[
\sigma_{\pm}(S)=\sigma_{\pm}(SS_0)=\sigma_{\pm}(S)+G_1=\sigma(S)+\sum_{i=1}^{\ell}\{0,2h_i\}+G_1=\sigma(S)+G_1=\sigma_{\pm}(\sigma(S)S_0)\,.
\]
It follows that $[S]=[\sigma(S)]+[S_0]$, whence $[S] \in \{[g]+[S_0]\colon g\in G \text{ such that }2g\in G_1\}$.

It remains to prove that $\mc C_{[S_0]} \cong C_2^{r+t}$. If $g \in G$ and $2g \in G_1$, then $2([g]+[S_0]) = [g^2S_0] = [S_0]$. Hence $\mc C_{[S_0]}$ is an
elementary  $2$-group, and it suffices to prove that $|\mc C_{[S_0]}|= 2^{r+t}$.
Since $G \cong C_2^r \oplus C_4^t$, we get $|\{g+G_1 \colon g\in G, \ 2g\in G_1\}|=2^{r+t}$. If $g_1,\,g_2 \in G$ are such that $2g_1,\,2g_2 \in G_1$, then
\[
g_1+G_1 =g_2+G_1 \,\iff\, \sigma_\pm(g_1S_0) = \pm g_1+G_1 = \pm g_2 +G_1= \sigma_\pm(g_2S_0) \,\iff\, [g_1]+[S_0] = ]g_2]+[S_0].
\]
Hence $|\mc C_{[S_0]}|= 2^{r+t}$.
\end{proof}

\medskip
\section{On the arithmetic  of  $\mathcal B_{\pm} (G)$} \label{6}
\medskip

In this section,  we study the arithmetic of $\mathcal B_{\pm} (G)$ and we post two problems (Problem \ref{6.7} and \ref{6.10}), continuing investigations started in \cite{B-M-O-S22}. So far, it has turned out that many of the structural results valid for $\mathcal B (G)$ also hold true for $\mathcal B_{\pm} (G)$, but there are also striking differences (see the discussion after Theorem \ref{6.2}).

Let $H$ be a BF-monoid. We start with two arithmetic invariants which are closely connected with the catenary degree. The set
\[
\Delta (H) = \bigcup_{L \in \mathcal L (H)} \Delta (L) \ \subset \N
\]
is the {\it set of distances} of $H$.  For $a \in H$, let $\omega (H,a)$ denote the smallest $N \in \N_0 \cup \{\infty\}$ with the following property:
\begin{itemize}
\item[] For all $n \in \N$ and $a_1, \ldots, a_n \in H$ with $a \t a_1 \cdot \ldots \cdot a_n$, there is a subset $\Omega \subset [1, n]$ such that $|\Omega| \le N$ and $a \t \prod_{\lambda \subset \Omega} a_{\lambda}$.
\end{itemize}
We set
\[
\omega (H) = \sup \{ \omega (H, u) \colon u \in \mathcal A (H) \} \in \N_0 \cup \{\infty\} \,.
\]
By definition, we have $\omega (H, u)=1$ if and only if $u \in H$ is a prime element, and $H$ is factorial but not a group if and only if $\omega (H)=1$. There is a canonical chain of inequalities (\cite[Proposition 3.6]{Ge-Ka10a})
\begin{equation} \label{inequality-1}
2 + \sup \Delta (H) \le \mathsf c (H) \le \omega (H) \,,
\end{equation}
and, in general, all inequalities can be strict, even for finitely generated monoids or for Dedekind domains.
To recall the concept of elasticities, suppose that $H \ne H^{\times}$ and let $k \in \N_0$. Then
\[
\mathcal U_k (H) = \bigcup_{k \in L, \ L \in \mathcal L (H)} L \ \subset \N
\]
denotes the {\it union of sets of lengths} containing $k$, $\lambda_k (H) = \min \mathcal U_k (H)$,  and
\[
\rho_k (H) = \sup \mathcal U_k (H) \quad \text{is  the {\it $k$-th elasticity}  of $H$}.
\]

We continue with a simple characterization of half-factoriality. It  formally coincides with the characterization of half-factoriality of $\mathcal B (G)$. By definition of half-factoriality $H$ is half-factorial if and only if $\Delta (H)=\emptyset$ if and only if $\rho_k (H)=k$ for all $k \in \N$.

\smallskip
\begin{lemma} \label{6.1}
Let $G$ be a finite abelian group. Then the following statements are equivalent$\,:$.
\begin{enumerate}
\item[(a)] $\mathsf c \big( \mathcal B_{\pm} (G) \big) \le 2$.

\smallskip

\item[(b)] $\Delta \big( \mathcal B_{\pm} (G) \big) = \emptyset$.

\smallskip

\item[(c)] $|G| \le 2$.
\end{enumerate}
\end{lemma}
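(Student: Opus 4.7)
The plan is to verify the cycle $(a)\Rightarrow(b)\Rightarrow(c)\Rightarrow(a)$. The implication $(a)\Rightarrow(b)$ is immediate from \eqref{inequality-1}, which yields $2+\sup\Delta(H)\le\mathsf c(H)$ and hence forces $\Delta(\mc B_\pm(G))=\emptyset$ whenever $\mathsf c(\mc B_\pm(G))\le 2$ (recall $\Delta(\mc B_\pm(G))\subset\N$). For $(c)\Rightarrow(a)$, note that $|G|\le 2$ implies $-\id_G=\id_G$, so $\mc B_\pm(G)=\mc B(G)$; the atoms are $0$ together with $e^2$ for the nonzero $e$ in case $|G|=2$, and both are prime in $\mc F(G)$. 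Hence $\mc B_\pm(G)$ is factorial and $\mathsf c(\mc B_\pm(G))=0$.

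The main work lies in $(b)\Rightarrow(c)$, which I would prove contrapositively: assuming $|G|\ge 3$, I will exhibit an element of $\mc B_\pm(G)$ with two factorizations of distinct lengths. By Proposition \ref{4.1}.1, $\mc B_\pm(G_0)\subset\mc B_\pm(G)$ is a divisor-closed submonoid for every $G_0\subset G$, so $\Delta(\mc B_\pm(G_0))\subset\Delta(\mc B_\pm(G))$ and it suffices to work inside a conveniently chosen small subset. Cauchy's theorem and the structure theorem for finite abelian groups together ensure that a group of order at least $3$ contains at least one of the following: (i) an element $g$ of odd prime order $p\ge 3$; (ii) an element $g$ of order $4$; or (iii) a subgroup isomorphic to $C_2\oplus C_2$.

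In case (i), the observation that $g^a\in\mc B_\pm(G)$ for $a<p$ forces the integer $\sum_{i=1}^a\epsilon_i$ (which lies in $[-a,a]$ and must be divisible by $p$) to vanish, hence $a$ must be even; consequently both $g^2$ and $g^p$ are atoms, and $g^{2p}$ admits $(g^2)^p$ of length $p$ and $(g^p)^2$ of length $2$. In case (ii), the key atom is $A=g\cdot(2g)\cdot(3g)$, which lies in $\mc B_\pm(G)$ via $g-2g-3g=-4g=0$, and whose only proper subsequence in $\mc B_\pm$ is $g(3g)$; the complementary factor $2g$ is not in $\mc B_\pm$, so $A$ is atomic, and then $A^2=g^2(2g)^2(3g)^2$ factors both as $A\cdot A$ (length $2$) and as $g^2\cdot(2g)^2\cdot(3g)^2$ (length $3$). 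In case (iii), on the $2$-torsion subset $\{e_1,e_2,e_1+e_2\}$ the weights $\pm\id$ act trivially, so $\mc B_\pm=\mc B$ there, and $e_1^2e_2^2(e_1+e_2)^2=(e_1e_2(e_1+e_2))^2$ displays lengths $3$ and $2$.

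The main obstacle will be the atom verification in case (ii): one has to enumerate every proper subsequence of $g(2g)(3g)$ and check that $g,\,2g,\,3g,\,g(2g)$, and $(2g)(3g)$ each fail to lie in $\mc B_\pm$, which boils down to the elementary congruence observations $\pm1\pm2,\,\pm2\pm3\not\equiv 0\pmod 4$. Once this is done, the remaining atom checks in cases (i) and (iii) are routine, and the trichotomy above completes the argument.
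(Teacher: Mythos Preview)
Your proof is correct and follows essentially the same cycle $(a)\Rightarrow(b)\Rightarrow(c)\Rightarrow(a)$ as the paper, with the contrapositive argument for $(b)\Rightarrow(c)$ split into the same three structural cases. The differences are cosmetic: in your odd case you use the atoms $g^2$ and $g^p$ inside $g^{2p}$, whereas the paper uses $(-g)g$ and $g^n$ inside $g^n(-g)^n$; in your order-$4$ case you use $A=g(2g)(3g)$, whereas the paper uses $U=(-2g)g^2$ and moreover handles all orders $n\ge 4$ directly rather than first reducing via the structure theorem to order exactly $4$. Both routes are valid; the paper's trichotomy is marginally slicker in that it avoids appealing to the structure theorem, while yours makes the atom verifications slightly more explicit.
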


\begin{proof}
(a) $\Longrightarrow$ (b) By \eqref{inequality-1}.

(b) $\Longrightarrow$ (c) Assume to the contrary that $|G|\ge 3$. If there is an element $g \in G$ with odd order $\ord (g)=n \ge 3$, then $g^n$ and $(-g)g$ are atoms and $\mathsf L_{\mathcal B_{\pm} (G)} \big( g^n (-g)^n \big) = \{2,n\}$. If there is an element $g \in G$ with order $\ord (g)=n \ge 4$, then $U = (-2g) g^2$ is an atom and $\mathsf L_{\mathcal B_{\pm} (G)} \big( (-U)U \big) = \{2,3\}$. If $G$ is an elementary $2$-group of rank $\mathsf r (G) \ge 2$, then there are two independent elements $e_1, e_2 \in G$, Then $U = e_1e_2(e_1+e_2)$ is an atom and $\mathsf L_{\mathcal B_{\pm} (G)} \big( (-U)U \big) = \{2,3\}$. Thus, in all cases,  we get $\Delta(\mc B_\pm(G))\ne \emptyset$.

(c) $\Longrightarrow$ (a) If $|G| \le 2$, then $\mathcal B_{\pm} (G) = \mathcal B (G)$ is factorial, whence $\mathsf c \big( \mathcal B_{\pm} (G) \big) \le 2$.
\end{proof}

From now on, we suppose that $\mathcal B_{\pm} (G)$ is not half-factorial and investigate the invariants $\mathcal U_k \big(\mathcal B_{\pm}(G) \big)$. Recall that $1 + \mathsf d (G) = \mathsf D (G)$.

\smallskip
\begin{theorem} \label{6.2}
Let $G$ be a  finite abelian group of odd order such that $\mathsf D(G)=\mathsf D^*(G)\ge 3$, and let $k=\ell \mathsf D(G)+j \ge 2$, where $\ell\in \N_0$ and $j\in [0,\mathsf d(G)]$. Then, we have
\[
\mathcal U_{k} \big(\mathcal B_{\pm}(G) \big) =
\begin{cases}
	\big[2, \lfloor k\mathsf D(G)/2\rfloor\big], &\text{ if } j\in [2,\mathsf d(G)] \text{ and }\ell=0;\\
	\big[2\ell, \lfloor k\mathsf D(G)/2\rfloor\big], &\text{ if } j=0  \text{ and }\ell\ge 1;\\
	\big[2\ell+1, \lfloor k\mathsf D(G)/2\rfloor\big], &\text{ if } j\in [1, \mathsf d(G)/2]  \text{ and }\ell\ge 1;\\
	\big[2\ell+2, \lfloor k\mathsf D(G)/2\rfloor\big], &\text{ if } j\in [1 + \mathsf d(G)/2, \mathsf d (G)]  \text{ and }\ell\ge 1.
\end{cases}
\]
\end{theorem}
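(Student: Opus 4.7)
The plan is to establish the maximum $\rho_k=\max\mathcal U_k(\mathcal B_{\pm}(G))$ and the minimum $\lambda_k=\min\mathcal U_k(\mathcal B_{\pm}(G))$ via direct length estimates, and then realize every intermediate length by explicit constructions. From inequality~\eqref{daven-2} together with the hypothesis $\mathsf D(G)=\mathsf D^*(G)$, and the observation that $G$ has odd order (forcing $t=r$ in the notation of \eqref{daven-2}), I get $\mathsf D_{\pm}(G)=\mathsf D(G)$; moreover $\mathsf D(G)=1+\sum_{i=1}^r(n_i-1)$ is odd and $\mathsf d(G)=\mathsf D(G)-1$ is even, so $\mathsf d(G)/2\in\N$. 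I fix an atom $U\in\mathcal A(\mathcal B_{\pm}(G))$ of length $\mathsf D(G)$, and note that $g^2\in\mathcal A(\mathcal B_{\pm}(G))$ for every $g\in G^\bullet$ (since $g+(-g)=0$ realizes $g^2$ as a plus-minus zero-sum, and no proper subsequence of $g^2$ qualifies).

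For the endpoint bounds, fix $a\in\mathcal B_{\pm}(G)$ with $k\in\mathsf L(a)$; writing $a$ as a product of $k$ atoms of length in $[2,\mathsf D(G)]$ yields $2k\le|a|\le k\mathsf D(G)$, and any factorization of $a$ of length $m$ satisfies $2m\le|a|\le m\mathsf D(G)$, so $\lceil 2k/\mathsf D(G)\rceil\le m\le\lfloor k\mathsf D(G)/2\rfloor$. The upper estimate gives $\rho_k\le\lfloor k\mathsf D(G)/2\rfloor$. Substituting $k=\ell\mathsf D(G)+j$ in the lower estimate: when $j=0$ the ceiling equals $2\ell$; when $j\in[1,\mathsf d(G)/2]$ one has $2j\le\mathsf d(G)<\mathsf D(G)$, giving $2\ell+1$; when $j\in[1+\mathsf d(G)/2,\mathsf d(G)]$ one has $\mathsf D(G)<2j<2\mathsf D(G)$, giving $2\ell+2$. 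In the remaining case $\ell=0$, $j\in[2,\mathsf d(G)]$, the bound $m\ge 2$ holds because $k\ge 2$ forces $a$ to be non-atomic, so $\min\mathsf L(a)\ne 1$.

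For each $m$ in the claimed interval, I would then construct $a_m\in\mathcal B_{\pm}(G)$ with $\{k,m\}\subset\mathsf L(a_m)$, using as building blocks $U$, the length-two atoms $g^2$, and atoms of each intermediate length $t\in[3,\mathsf D(G)-1]$, whose existence under the hypothesis can be verified by explicit construction (e.g., in the cyclic case $G=C_n$ one exhibits atoms such as $g^2\cdot(3g)$, $g^3\cdot(2g)$, etc.). The endpoint $\rho_k$ is realized by a product of $k$ atoms of length $\mathsf D(G)$ chosen to involve enough distinct generators so that a refactoring into length-two atoms (plus one length-three atom when $k\mathsf D(G)$ is odd) is available; the endpoint $\lambda_k$ is realized by a product of $\lambda_k$ long atoms combined with enough copies of $g^2$ to permit a rearrangement of length $k$. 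For intermediate $m$, one takes $a_m$ of the form $U^{s}\cdot V_t^{\,r}\cdot(g^2)^{q}$, where $V_t$ is an atom of length $t\in[3,\mathsf D(G)-1]$ and $s,r,q$ are tuned so that both prescribed factorizations exist. The main obstacle is exactly this interval-filling step: sets of lengths of single-element products $g^N$ form arithmetic progressions of step $\mathsf D(G)-2$, which is $\ge 3$ once $\mathsf D(G)\ge 5$, so intermediate values must come from multi-element sequences, and verifying coexistence of the two prescribed factorizations requires a delicate combinatorial analysis. A cleaner alternative—if provable under the hypothesis—would be to show $\min\Delta(\mathcal B_{\pm}(G))=1$, which would make every $\mathcal U_k$ an interval directly and obviate the case analysis.
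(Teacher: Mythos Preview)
Your endpoint bounds are sound and match the paper's, but the proof as written has a genuine gap: the interval-filling step you flag as ``the main obstacle'' is never carried out. You describe the intended construction only in outline (``$a_m$ of the form $U^s\cdot V_t^r\cdot(g^2)^q$ \ldots\ tuned so that both prescribed factorizations exist'') and concede that verifying the two factorizations coexist ``requires a delicate combinatorial analysis''. That analysis is the content of the theorem, and without it the argument is incomplete. Your alternative proposal---showing $\min\Delta(\mathcal B_\pm(G))=1$---would not by itself force every $\mathcal U_k$ to be an interval; one needs a stronger structural fact, and in any case you do not prove it.

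The paper avoids this difficulty entirely by invoking prior results from \cite{B-M-O-S22}: Theorem~5.2 there shows that every $\mathcal U_k(\mathcal B_\pm(G))$ is an interval, so only the endpoints need to be pinned down; Theorem~5.7 gives $\rho_{2k}=k\mathsf D(G)$ and the upper bound for $\rho_{2k+1}$; and Theorem~5.8 gives the exact formula \eqref{lambda-formula} for $\lambda_k$ in terms of $\rho_{2\ell+1}$. The only new work in the paper's proof is a concrete construction witnessing $\rho_{2k+1}\ge k\mathsf D(G)+\lfloor\mathsf D(G)/2\rfloor$, using the basis atoms $U_1=e_1^{n_1-1}\cdots e_r^{n_r-1}(e_1+\cdots+e_r)$, its doubled analogue $U_2$, and the short atoms $V_1=(e_1+\cdots+e_r)^2$ and $V_2=V_1(2e_1+\cdots+2e_r)$; here the hypothesis $\mathsf D(G)=\mathsf D^*(G)$ guarantees $U_1,U_2$ have maximal length. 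Your sketch for realizing $\rho_k$ (``chosen to involve enough distinct generators so that a refactoring into length-two atoms plus one length-three atom is available'') is in the right spirit but does not specify which length-three atom appears or why it divides the product; the paper's choice of $V_2$ is exactly what makes this work.

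One minor point: your length bound $2m\le|a|$ assumes every atom has length $\ge 2$, but the sequence $0$ is an atom of length~$1$. This is easily repaired since $0$ is prime in $\mathcal B_\pm(G)$ and can be cancelled, but it should be mentioned.
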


\begin{proof}
Main parts of the claim are based on prior work done in \cite{B-M-O-S22}.

To begin with, the unions $\mathcal U_k \big(\mathcal B_{\pm}(G) \big)$ are intervals for all $k\in \N$ by \cite[Theorem 5.2]{B-M-O-S22}. Thus, it remains to determine their maxima $\rho_k \big(\mathcal B_{\pm}(G) \big)$ and their minima $\lambda_k \big(\mathcal B_{\pm}(G) \big)$.
We use that $\mathsf D \big(\mathcal B_{\pm}(G) \big)=\mathsf D(G)\ge 3$ because $|G|$ is odd (\cite[Corollary 6.2]{B-M-O-S22}).

\smallskip
\noindent
{\bf 1.} On $\rho_k \big(\mathcal B_{\pm}(G) \big)$. We need to prove that
\[
\rho_k \big(\mathcal B_{\pm}(G) \big)=\lfloor k\mathsf D(G)/2 \rfloor \qquad \text{ for all } \qquad k\ge 2 \,.
\]
A simple counting argument (\cite[Theorem 5.7]{B-M-O-S22}) shows that, for all $k \in \N$,
\[
\rho_{2k} \big(\mathcal B_{\pm}(G) \big) = k \mathsf D (G) \quad \text{and} \quad \rho_{2k+1} \big(\mathcal B_{\pm}(G) \big) \le  k \mathsf D (G) + \Big\lfloor \frac{\mathsf D (G)}{2} \Big\rfloor \,.
\]
Thus, it remains to show that, for $k \in \N$,
\[
\rho_{2k+1} \big(\mathcal B_{\pm}(G) \big) \ge  k \mathsf D (G) + \Big\lfloor \frac{\mathsf D (G)}{2} \Big\rfloor \,.
\]
Let $k\in \N$. Suppose that $G\cong C_{n_1}\oplus \ldots\oplus C_{n_r}$, where $r\in \N$ and $1<n_1\t \ldots\t n_r$, and let $(e_1,\ldots,e_r)$ be a basis of $G$ with $\ord(e_i)=n_i$ for all $i\in[1,r]$. Since $|G|$ is odd and $\mathsf D(G)=\mathsf D^*(G)$,
 \[
U_1 = e_1^{n_1-1}\cdot\ldots\cdot e_r^{n_r-1}(e_1+\ldots+e_r) \quad \text{and} \quad U_2 =(2e_1)^{n_1-1}\cdot\ldots\cdot (2e_r)^{n_r-1}(2e_1+\ldots+2e_r)
\]
are atoms of $\mathcal B(G)$ of length $\mathsf D(G)$. Again, because $|G|$ is odd, we have $\mathcal A \big( \mathcal B (G) \big) \subset \mathcal A \big( \mathcal B_{\pm} (G) \big)$ by  \cite[Theorem 6.1]{B-M-O-S22}, and therefore $U_1$ and $U_2$ are atoms of $\mathcal B_{\pm}(G)$ of length $\mathsf D(\mathcal B_{\pm}(G))$.
Since
\[
V_1 =(e_1+\ldots+e_r)^2 \quad \text{ and } \quad V_2 =V_1 (2e_1+\ldots+2e_r)
\]
are both atoms of $\mathcal B_{\pm}(G)$, we obtain the following equation, with products of atoms on both sides,
\begin{align*}
U_1^{k}  U_1^{k}  U_2
 = (e_1^2)^{k(n_1-1)}\cdot \ldots\cdot (e_r^2)^{k(n_r-1)} V_1^{k-1} V_2 \big( (2e_1)^2 \big)^{(n_1-1)/2}\cdot\ldots\cdot \big((2e_r)^2 \big)^{(n_r-1)/2}\,,
\end{align*}
whence
\[
\rho_{2k+1} \big(\mathcal B_{\pm}(G) \big)\ge \sum_{i=1}^{r}k (n_i-1)+k-1+1+\sum_{i=1}^r(n_i-1)/2 = k \mathsf D (G) + \lfloor \mathsf D(G)/2 \rfloor \,.
\]
\smallskip
\noindent
{\bf 2.} On $\lambda_k \big(\mathcal B_{\pm}(G) \big)$. Let $k=\ell \mathsf D(G)+j \ge 2$, where $\ell\in \N_0$ and $j\in [0,\mathsf d(G)]$. Since $\{ \id_G, - \id_G\}  \subset \Aut (G)$ is a subgroup, \cite[Theorem 5.8]{B-M-O-S22} implies that
\begin{equation} \label{lambda-formula}
\lambda_k \big(\mathcal B_{\pm}(G) \big) =
\begin{cases}
2 \ell \quad  & \text{for} \quad j = 0 \,, \\
2 \ell+1 \quad  & \text{for} \quad j \in [1, \rho_{2\ell+1} \big(\mathcal B_{\pm}(G) \big) - \ell \mathsf D (G)] \,, \\
2 \ell+2 \quad  & \text{for} \quad j \in [\rho_{2 \ell+1} \big(\mathcal B_{\pm}(G) \big) - \ell \mathsf D (G)+1, \mathsf d (G) ]  \,.
\end{cases}
\end{equation}

If $\ell=0$, then $j\in [2,\mathsf d(G)]=[\rho_{2\ell+1}(\mathcal B_{\pm}(G))-\ell \mathsf D(G)+1,\mathsf d(G)]$, and \eqref{lambda-formula} implies that $\lambda_k(\mathcal B_{\pm}(G))=2$. Suppose now that $\ell\ge 1$.

If $j=0$, then \eqref{lambda-formula} implies $\lambda_k(\mathcal B_{\pm}(G))=2\ell$.

If $j\in [1, \mathsf d(G)/2]=[1, \rho_{2\ell+1}(\mathcal B_{\pm}(G))-\ell \mathsf D(G)]$, then \eqref{lambda-formula} implies $\lambda_k(\mathcal B_{\pm}(G))=2\ell+1$.

If $j\in [ (\mathsf D(G)+1)/2,\mathsf d(G)]=[\rho_{2\ell+1}(\mathcal B_{\pm}(G))-\ell \mathsf D(G)+1,\mathsf d(G)]$, then \eqref{lambda-formula} implies that $\lambda_k(\mathcal B_{\pm}(G))=2\ell+2$.
\end{proof}

Thus, in particular,  all unions $\mathcal U_k \big( \mathcal B_{\pm} (G) \big)$ are intervals, as it is the case for the  unions $\mathcal U_k \big( \mathcal B (G) \big)$. Under very mild assumptions, unions of sets of lengths of BF-monoids are well-structured (see \cite{Tr19a} for a result in this direction). However, even for numerical monoids (which are finitely generated C-monoids as well as weakly Krull monoids), they need not be intervals. Furthermore, the maxima of the unions behave differently for the monoids $\mathcal B (G)$ and $\mathcal B_{\pm} (G)$. Indeed, for all cyclic groups $G$ of odd order and for all $k \in \N$, we have (see  Theorem \ref{6.2} and \cite[page 75, Theorem 5.3.1]{Ge-Ru09}),
\[
\rho_{2k+1} \big( \mathcal B (G) \big) = k|G| +1 \quad \text{and} \quad \rho_{2k+1} \big( \mathcal B_{\pm} (G) \big) = k|G| + \lfloor |G|/2 \rfloor \,.
\]
Theorem \ref{6.2} offers the precise values of $\rho_k \big( \mathcal B_{\pm} (G) \big)$ (in terms of the Davenport constant) for all $k \in \N$. The precise values of $\rho_{2k+1} \big( \mathcal B (G) \big)$, with $k \in \N$, are known only for very special groups (\cite{Ge-Gr-Yu15}).

\smallskip
\begin{lemma} \label{6.3}
Let $G$ be a cyclic group of odd order $n\ge 3$. Then we have $\{2, j \} \in \mathcal L \big( \mathcal B_{\pm}(G) \big)$ for every $j \in [3, n]$, and consequently $[1,n-2]\subset \Delta \big(\mathcal B_{\pm}(G) \big)$.
\end{lemma}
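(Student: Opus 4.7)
The approach is to construct, for each $j \in [3, n]$, an explicit element $B_j \in \mc B_\pm(G)$ with $\mathsf L(B_j) = \{2, j\}$; since $\Delta(\{2, j\}) = \{j - 2\}$ and $j - 2$ ranges over $[1, n - 2]$ as $j$ does over $[3, n]$, the inclusion $[1, n - 2] \subset \Delta(\mc B_\pm(G))$ will follow. Fix a generator $g$ of $G = C_n$. For $j \in [3, n]$ set $h = (n - j + 1)g$ and
\[
B_j = g^{n + j - 1}\cdot h \,\in \mc F(G).
\]
One checks $B_j \in \mc B(G) \subset \mc B_\pm(G)$ (the ordinary sum equals $2ng = 0$), and its support is $\{g, h\}$, so by Proposition \ref{4.1} all factorizations of $B_j$ occur in the divisor-closed submonoid $\mc B_\pm(\{g, h\})$.

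The central step is to classify the atoms of $\mc B_\pm(\{g, h\})$ dividing $B_j$. Since $h$ occurs in $B_j$ with multiplicity $1$, the relevant atoms have the form $g^a h^b$ with $b \in \{0, 1\}$. For $b = 0$ the odd-$n$ parity analysis of the congruence $\sum\epsilon_i \equiv 0 \pmod n$ shows $g^a \in \mc B_\pm$ iff $a$ is even or $a \ge n$ is odd, and minimality yields exactly $g^2$ and $g^n$. For $b = 1$ the congruence becomes $\sum\epsilon_i \equiv \pm(j-1)\pmod n$; solving in the two residue classes of $a$ modulo $2$ via the shifts $\sum\epsilon_i = \pm(j-1)$ and $\sum\epsilon_i = \pm(j-1)\mp n$ produces the atoms $V_j := g^{j-1}h$ of length $j$ and $W := g^{n-j+1}h$ of length $n-j+2$, while every larger admissible exponent strips off a $g^2$ (or a $g^n$) to reveal a smaller factorization.

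Writing an arbitrary factorization as $(g^2)^{x_1}(g^n)^{x_2}V_j^{x_3}W^{x_4}$, the $h$-count enforces $x_3 + x_4 = 1$ and the $g$-count gives $2x_1 + nx_2 + (j-1)x_3 + (n-j+1)x_4 = n + j - 1$. If $x_3 = 1$, this becomes $2x_1 + nx_2 = n$; since $n$ is odd it forces $(x_1, x_2) = (0, 1)$, giving the length-$2$ factorization $B_j = g^n\cdot V_j$. If $x_4 = 1$, it becomes $2x_1 + nx_2 = 2(j - 1)$; the option $x_2 \ge 1$ would make $2x_1$ odd, so $(x_1, x_2) = (j - 1, 0)$, giving the length-$j$ factorization $B_j = (g^2)^{j-1}\cdot W$. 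Hence $\mathsf L(B_j) = \{2, j\}$, completing the proof.

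The main obstacle is the atom classification, and in particular showing that $V_j$ is an atom of $\mc B_\pm$ even though the subsequence $g^2$ is $\pm$-zero-sum. The complement $g^{j-3}h$ would itself have to lie in $\mc B_\pm$ for any splitting through this subsequence, and a careful parity check against $\sum\epsilon_i \equiv \pm(j-1)\pmod n$ with $|\sum\epsilon_i| \le j - 3$ rules this out (using that $n$ is odd); a symmetric argument handles $W$. Once this is pinned down, the linear-Diophantine enumeration of factorizations in the previous paragraph is routine.
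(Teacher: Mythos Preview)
Your proof is correct and follows essentially the same route as the paper. With $k = n-j+1$ your element $B_j = g^{n+j-1}h = g^n \cdot g^{n-k}(kg)$ coincides with the paper's product $U\cdot V$, and your atoms $V_j = g^{j-1}h$ and $W = g^{n-j+1}h$ are exactly the paper's $(kg)g^{n-k}$ and $(kg)g^k$. The only methodological difference is in the atom verification: the paper invokes the inclusion $\mathcal A\big(\mathcal B(G)\big) \subset \mathcal A\big(\mathcal B_\pm(G)\big)$ for odd $|G|$ (cited from \cite{B-M-O-S22}) to handle $g^n$ and $g^{n-k}(kg)$, and then simply asserts the description $\{S \in \mathcal A(\mathcal B_\pm(\{g,kg\})) : \mathsf v_{kg}(S)=1\} = \{(kg)g^{n-k},\,(kg)g^k\}$, whereas you derive all four relevant atoms directly via the parity analysis of $\sum \varepsilon_i \pmod n$. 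Your argument is thus more self-contained on this point. One cosmetic remark: in the case $x_4=1$ your sentence ``$x_2\ge 1$ would make $2x_1$ odd'' is literally true only for odd $x_2$; for $x_2\ge 2$ the obstruction is that $2x_1$ becomes negative, but the conclusion is the same. Also note the degenerate case $j=n$ (where $h=g$) collapses to $B_n = g^{2n}$ with atoms $g^2,\,g^n$ only, which your enumeration still handles correctly.
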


\begin{proof}
It suffices to show that, for every $k\in [1, n-1]$, there exists atoms $U,V\in \mathcal A \big( \mathcal B_{\pm}(G) \big)$ such that $\mathsf L_{\mathcal B_{\pm} (G)}(UV)=\{2,n-k+1\}$.
	
	Let $g\in G$ with $\ord(g)=n$ and let $k\in [1,n-1]$.
	Since $\{g^n, g^{n-k}(kg)\}\subset \mathcal A(G)$, it follows from  \cite[Theorem 6.1]{B-M-O-S22} that $\{ g^n, g^{n-k}(kg)\}\subset \mathcal A \big( \mathcal B_{\pm}(G) \big)$. Suppose $g^n\cdot g^{n-k}(kg)=V_1\cdot\ldots\cdot V_{\ell}$, where $\ell\in \N$ and $V_1,\ldots,\cdot V_{\ell}\in \mathcal A \big( \mathcal B_{\pm}(G) \big)$, and
	note that
\[
\big\{S\in \mathcal A \big( \mathcal B_{\pm}(\{g,kg\}) \big) \colon \mathsf v_{kg}(S)=1\}=\{(kg)g^{n-k}, (kg)g^k \big\} \,.
\]
If there exists some $i\in [1,\ell]$, say $i=1$, such that $V_1=(kg)g^{n-k}$, then $\ell=2$ and $V_2=g^n$. Otherwise there exists some $i\in [1,\ell]$, say again $i=1$, such that $V_1=(kg)g^k$. Then $V_2\cdot\ldots\cdot V_{\ell}=g^{2n-2k}=(g^2)^{n-k}$ and hence $\ell=n-k+1$.
	Therefore, we obtain $\mathsf L_{\mathcal B_{\pm} (G)} \big(g^n\cdot g^{n-k}(kg) \big)=\{2,n-k+1\}$.
\end{proof}

If $G$ is a finite abelian group with $|G| \ge 3$, then
\begin{equation} \label{inequality-2}
2 + \sup \Delta \big( \mathcal B (G) \big)  \le \mathsf c \big( \mathcal B (G) \big) \le \omega \big( \mathcal B (G) \big) = \mathsf D (G) \,.
\end{equation}
Since the inclusion $\mathcal B (G) \hookrightarrow \mathcal F (G)$ is a divisor theory, the last equality is straightforward and all the inequalities follow from \eqref{inequality-1}.
The first inequality (on the left side) is an equality if $\mathsf D (G) = \mathsf D^* (G)$ (see \eqref{daven-1} and \cite[Corollary 4.1]{Ge-Gr-Sc11a}). In our next result, we determine $\omega \big( \mathcal B_{\pm} (G) \big)$ for a prime cyclic group $G$ of order $|G|\ge 3$. Since $\mathcal B_{\pm} (G)$ is not Krull by Theorem \ref{4.4}, the inclusion $\mathcal B_{\pm} (G) \hookrightarrow \mathcal F (G)$ is not a divisor homomorphism, whence there exist sequences $A, B \in B_{\pm} (G)$ such that $A \t B$ in $\mathcal F (G)$ but $A \nmid B $ in $\mathcal B_{\pm} (G)$.

\smallskip
\begin{lemma} \label{special}
	Let $G$ be prime cyclic of order $|G|= p \ge 3$ and let $S$ be a sequence over $G^{\bullet}$ of length $|S|\ge p-1$. Then $\sigma_{\pm}(S)=G$. In particular, $S\in \mathcal B_{\pm}(G)$.
\end{lemma}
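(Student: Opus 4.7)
The plan is to recognize $\sigma_\pm(S)$ as an iterated sumset and apply the Cauchy--Davenport theorem. Writing $S = g_1 \cdot \ldots \cdot g_\ell$ with all $g_i \in G^\bullet$, the very definition gives
\[
\sigma_\pm(S) \;=\; \{g_1,-g_1\} + \{g_2,-g_2\} + \ldots + \{g_\ell, -g_\ell\} \;\subset\; G.
\]
Since $p$ is odd and each $g_i \ne 0$, we have $g_i \ne -g_i$, so every set $A_i = \{g_i,-g_i\}$ has exactly two elements.

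Next I would invoke the iterated Cauchy--Davenport inequality for subsets of $\Z/p\Z$: for nonempty $A_1, \ldots, A_\ell \subset G$,
\[
|A_1 + \ldots + A_\ell| \;\ge\; \min\Bigl(p,\; \sum_{i=1}^\ell |A_i| - (\ell - 1)\Bigr).
\]
With $|A_i|=2$ for every $i$, the right-hand side becomes $\min(p,\, \ell+1)$. The hypothesis $\ell \ge p-1$ yields $\ell+1 \ge p$, hence $|\sigma_\pm(S)| \ge p$, which forces $\sigma_\pm(S) = G$. In particular $0 \in \sigma_\pm(S)$, and therefore $S \in \mc B_\pm(G)$, proving the ``In particular'' clause.

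There is no real obstacle here; the only thing to check is that the cardinality-$2$ condition on each $A_i$ holds, which uses precisely that $G^\bullet$ excludes $0$ together with the oddness of $p$ (so that no $g_i$ is its own inverse). The entire proof is a one-line application of Cauchy--Davenport after this observation.
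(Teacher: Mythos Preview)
Your proof is correct and essentially identical to the paper's: the paper also writes $\sigma_\pm(S)=\{g_1,-g_1\}+\ldots+\{g_\ell,-g_\ell\}$ and applies the iterated Cauchy--Davenport inequality to obtain $|\sigma_\pm(S)|\ge\min\{p,\,2\ell-(\ell-1)\}=p$. Your explicit verification that $|A_i|=2$ (using $p$ odd and $g_i\ne 0$) is a welcome addition, since the paper leaves this implicit.
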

\begin{proof}
	Suppose $S=g_1\cdot\ldots\cdot g_{\ell}$, where $\ell=|S|\ge p-1$ and $g_1,\ldots,g_{\ell}\in G^{\bullet}$.
It follows from the  Cauchy-Davenport Theorem (\cite[Theorem 6.2]{Gr13a}) that
	\[
	|\sigma_{\pm}(S)|=|\{g_1,-g_1\}+\ldots+\{g_{\ell},-g_{\ell}\}|\ge \min\{p, 2\ell-(\ell-1)\}=p\,,
	\]
	whence $\sigma_{\pm}(S)=G$ and hence $S\in \mathcal B_{\pm}(G)$.
\end{proof}

\smallskip
\begin{theorem} \label{6.4}
Let $G$ be prime cyclic of order $|G|= p \ge 3$. Then $\Delta \big( \mathcal B_{\pm} (G)  \big) = [1,p-2]$ and
\[
2 + \max \Delta \big( \mathcal B_{\pm} (G)  \big)  = \mathsf c \big( \mathcal B_{\pm} (G) \big) = \omega \big( \mathcal B_{\pm} (G) \big) = \mathsf D_{\pm} (G) \,.
\]
\end{theorem}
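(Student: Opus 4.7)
The plan is to combine Lemma \ref{6.3} (which already provides $[1,p-2] \subset \Delta(\mc B_\pm(G))$) with the general chain $2 + \sup \Delta(H) \le \mathsf c(H) \le \omega(H)$ of \eqref{inequality-1}, and to close the loop by proving $\omega(\mc B_\pm(G)) \le \mathsf D_\pm(G) = p$. Once this bound is in hand, the inequalities
\[
p = 2 + (p-2) \le 2 + \max \Delta(\mc B_\pm(G)) \le \mathsf c(\mc B_\pm(G)) \le \omega(\mc B_\pm(G)) \le p
\]
collapse to equalities, forcing $\Delta(\mc B_\pm(G)) = [1,p-2]$ and $\mathsf c(\mc B_\pm(G)) = \omega(\mc B_\pm(G)) = \mathsf D_\pm(G) = p$. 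The identity $\mathsf D_\pm(G) = p$ is itself immediate from $\mathsf D_\pm(G) \le \mathsf D(G) = p$ in \eqref{inequality-0} together with the lower bound from \eqref{daven-2} applied with $t = r = 1$ and $n_1 = p$.

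The crux is therefore $\omega(\mc B_\pm(G), U) \le p$ for every atom $U \in \mathcal A(\mc B_\pm(G))$, and the key tool will be Lemma \ref{special}: every sequence over $G^\bullet$ of length at least $p-1$ already lies in $\mc B_\pm(G)$. Given $U$ and $V_1, \ldots, V_n \in \mc B_\pm(G)$ with $U \mid V_1 \cdots V_n$ in $\mc B_\pm(G)$ (reducing the $V_i$ to atoms is standard), I would first handle the trivial case $U = 0$ and then discard any $V_\lambda$ equal to the atom $0$: since the $\pm$-weighted sum is unchanged when zero elements are added or removed, such factors can safely be dropped from a witnessing subset without losing divisibility in $\mc B_\pm(G)$. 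In the remaining case, $U$ and every surviving $V_\lambda$ have support in $G^\bullet$, and each $|V_\lambda| \ge 2$ (the only atom of length $1$ being $0$).

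I would then pick $\Omega \subset [1,n]$ of minimum cardinality satisfying $U \mid \prod_{\lambda \in \Omega} V_\lambda$ in $\mc B_\pm(G)$ and split the indices $\lambda \in \Omega$ according to what breaks when $V_\lambda$ is removed: either divisibility already fails in $\mc F(G)$ (type A), or it persists in $\mc F(G)$ but the quotient escapes $\mc B_\pm(G)$ (type B). A short multiset counting (for each $g \in \supp(U)$, estimating the $\lambda$ whose $\mathsf v_g$-contribution exceeds the current excess $\sum_{\Omega}\mathsf v_g(V_{\lambda'}) - \mathsf v_g(U)$) bounds the number of type-A indices by $|U| \le p$. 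For any type-B index, the quotient has support in $G^\bullet$, so Lemma \ref{special} forces its length to be at most $p-2$; combined with $|V_{\lambda'}| \ge 2$ for $\lambda' \in \Omega \setminus \{\lambda\}$, this gives $2(|\Omega|-1) \le p-2+|U|$, hence $|\Omega| \le (p+|U|)/2 \le p$. Either alternative yields $|\Omega| \le p$, as required.

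The main obstacle is bookkeeping the minimality argument carefully inside $\mc B_\pm(G)$, which (by Theorem \ref{4.4}) is not Krull, so that divisibility there is strictly stronger than in the ambient $\mc F(G)$. The type-A counting is elementary, and the type-B step is clean once Lemma \ref{special} is brought to bear; the delicate point is to verify that both the reduction to atomic $V_\lambda$ and the discarding of zero atoms remain compatible with divisibility \emph{inside} $\mc B_\pm(G)$, not merely in $\mc F(G)$.
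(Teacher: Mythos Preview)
Your proposal is correct and follows the same overall scaffold as the paper: establish $\mathsf D_\pm(G)=p$, invoke Lemma~\ref{6.3} for the lower bound $[1,p-2]\subset\Delta$, use the chain \eqref{inequality-1}, and close the loop by proving $\omega\big(\mc B_\pm(G)\big)\le p$ via Lemma~\ref{special}.

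The only substantive difference is in how you obtain $\omega\le p$. The paper argues constructively: given $U\mid V_1\cdots V_\ell$ with $\ell>p$ and all $|V_i|\ge 2$, it writes $U=V_1'\cdots V_r'$ with each $V_i'\mid V_i$ nontrivial, so $r\le|U|\le p$; then it simply takes the index set $[1,p]$ and observes that $|U^{-1}V_1\cdots V_p|\ge 2p-|U|\ge p$, whence Lemma~\ref{special} gives $U^{-1}V_1\cdots V_p\in\mc B_\pm(G)$. Your route via a minimal $\Omega$ and the type-A/type-B dichotomy is a valid alternative: the type-A bound (each witness $g$ accounts for at most $\mathsf v_g(U)$ indices, totalling $|U|$) and the type-B bound (Lemma~\ref{special} forces the short quotient, so $2(|\Omega|-1)\le p-2+|U|$) both land at $|\Omega|\le p$. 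The paper's argument is shorter because it never needs minimality or the case split---once $r\le p$ indices cover $U$ in $\mc F(G)$, any $p$ indices containing them already yield a quotient of length $\ge p$ over $G^\bullet$. Your approach, on the other hand, would adapt more readily to situations where one cannot simply pad with extra factors, so it is worth keeping in your toolkit.
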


\begin{proof}
It is well-known that $\mathsf D (G)=p$ and, by \cite[Corollary 6.2]{B-M-O-S22}, we have $\mathsf D (G) = \mathsf D_{\pm} (G)$. If $g \in G^{\bullet}$, then
\[
\mathsf L_{\mathcal B_{\pm} (G)} \big( (-g)^p g^p \big) = \{2, p \} \,,
\]
whence \eqref{inequality-1} implies that $p \le 2 + \max \Delta \big( \mathcal B_{\pm} (G)  \big) \le \mathsf c \big( \mathcal B_{\pm} (G) \big) \le  \omega \big( \mathcal B_{\pm} (G) \big)$. Since $[1,p-2] \subset \Delta \big( \mathcal B_{\pm} (G)  \big)$ by Lemma \ref{6.3}, it remains to show the following assertion.

\begin{enumerate}
\item[{\bf A.}]   $\omega \big( \mathcal B_{\pm} (G) \big) \le p$.
\end{enumerate}

{\it Proof of {\bf A.}} Let $U, V_1,\ldots, V_{\ell}\in \mathcal A \big(\mathcal B_{\pm}(G) \big)$ such that $U$ divides $V_1\cdot\ldots\cdot V_{\ell}$ in $\mathcal B_{\pm}(G)$. We must show that there exists some $I\subset [1,\ell]$ such that  $|I|\le p$ and $U$ divides $\prod_{i\in I}V_i$ in $\mathcal B_{\pm}(G)$.
	 If $\ell\le p$ or $U=0$, then the assertion is clear.
	
	  Suppose that $\ell>p$ and that $|U|\ge 2$. Without loss of generality, we may assume that $|V_i|\ge 2$ for all $i\in [1,\ell]$.
	  After renumbering if necessary, we may assume that there exists some $r\in [1,\ell]$ such that  $V_i=V_i'V_i''$ for each $i\in [1,r]$ and
	$U=V_1'\cdot\ldots\cdot V_r'$, where $V_i',V_i''\in \mathcal F(G)$  for all $i\in [1,r]$ and $V_1',\ldots, V_r'$ are nontrivial.
	Since $\mathsf D_{\pm}(G)=p$, we have $|U|\le p$, whence $r\le p$ and $U$ divides $V_1\cdot\ldots\cdot V_p$ in $\mathcal F(G)$.
	It suffices to prove that $U^{-1}V_1\cdot\ldots\cdot V_p \in \mc B_\pm(G)$. In fact, since $V_i$ does not divide $U$ in $\mathcal F(G)$ for every $i\in [1,p]$, we have $|U^{-1}V_1\cdot\ldots\cdot V_p|\ge p$ and hence the assertion follows from Lemma \ref{special}.
\end{proof}

\smallskip
Thus, in particular, $\Delta \big( \mathcal B_{\pm} (G) \big)$ is a finite interval if $G$ is prime cyclic. It is easy to verify that $\Delta (H)$ is finite with $\min \Delta (H) = \gcd \Delta (H)$ for all finitely generated monoids $H$. If $|G| \ge 3$, then $\Delta \big( \mathcal B (G) \big)$ is a finite interval with $\min \Delta \big( \mathcal B (G) \big) = 1$ (\cite{Ge-Yu12b}), and the same is true for   seminormal weakly Krull Mori monoids satisfying some further algebraic conditions (\cite{Ge-Zh16c}). On the other hand, sets of distances need not be intervals for numerical monoids and, if we do not impose any restrictions on the distribution of height-one prime ideals in the classes, then, for any given finite set $\Delta$ with $\min \Delta = \gcd \Delta$, there is a Dedekind domain $D$ with $\Delta (D) = \Delta$.  Since $\mathcal B_{\pm} (G)$, where $G$ is a prime cyclic group, is neither weakly Krull nor seminormal (see Theorems \ref{4.4} and \ref{5.2}), the statement that $\Delta \big( \mathcal B_{\pm} (G)  \big)$ is an interval does not follow from any earlier results in the literature.

\smallskip
\begin{corollary} \label{6.5}
The statements of {\rm Theorems \ref{6.2} and  \ref{6.4}} hold true  for the monoids $N_m^{(+)}(\mc O) \ ($discussed in {\rm Theorem \ref{3.2})} and for $R_m^{\prime \circ}(\Delta) \ ($discussed in \,{\rm Corollary \ref{3.6})}.
\end{corollary}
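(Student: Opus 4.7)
The plan is to deduce the corollary from the transfer homomorphisms of Theorem \ref{3.2} and Corollary \ref{3.6} together with Proposition \ref{2.2}. In the quadratic setting of Theorem \ref{3.2} the nontrivial element $\tau$ of $\Gamma = \Gal(K/\Q)$ acts as $-\id$ on the narrow class group (since $\mf a \cdot \tau(\mf a)$ is a rational principal ideal), so $\mc B_\Gamma(\mc C^{(+)}(\mc O)) = \mc B_\pm(\mc C^{(+)}(\mc O))$. Thus in both situations the transfer homomorphism, call it $\theta \colon H \to B = \mc B_\pm(G)$ (with $G$ being $\mc C^{(+)}(\mc O_\Delta)$ or $\mf F_\Delta$), satisfies $\mathsf c(H,\theta) \le 2$.

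By Proposition \ref{2.2} the systems of sets of lengths coincide, $\mc L(H) = \mc L(B)$, so every invariant depending only on sets of lengths --- in particular $\Delta$, the unions $\mc U_k$, and the elasticities $\rho_k$ and $\lambda_k$ --- is identical for $H$ and $B$. This transfers the formulas of Theorem \ref{6.2} verbatim and yields $\Delta(H) = [1,p-2]$ under the hypothesis of Theorem \ref{6.4}. For the catenary degree the third bullet of Proposition \ref{2.2} applies, because $\mathsf c(H,\theta) \le 2 < p = \mathsf c(\mc B_\pm(G))$, giving $\mathsf c(H) = p$.

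The remaining, and most delicate, part is transferring the equalities $\omega(\mc B_\pm(G)) = \mathsf D_\pm(G) = p$ of Theorem \ref{6.4}, which are not formally covered by Proposition \ref{2.2}. Here I would exploit the explicit shape of $\theta$ supplied by Proposition \ref{2.3}: it is the restriction to $H$ of a length-preserving homomorphism between free monoids, so atoms of $H$ and atoms of $B$ are in length-preserving bijection, giving at once $\mathsf D_\pm(H) = \max\{|U| \colon U \in \mc A(H)\} = \mathsf D_\pm(\mc B_\pm(G)) = p$; combined with the chain \eqref{inequality-1} this already yields $\omega(H) \ge \mathsf c(H) = p$. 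For the reverse bound $\omega(H) \le p$ I would reproduce Assertion \textbf{A} of the proof of Theorem \ref{6.4} inside $H$: given $U \in \mc A(H)$ dividing $V_1 \cdots V_\ell$ with $V_i \in \mc A(H)$, pass through $\theta$, invoke Lemma \ref{special} in $B$ to single out an index subset of size at most $p$ whose image product absorbs $\theta(U)$, and then use property (T2) together with $\mathsf c(H,\theta) \le 2$ to pull this divisibility back to $H$. The principal obstacle is exactly this final pull-back: (T2) supplies only local lifts, and one must carefully chain them via the fiber bound to obtain a genuine $H$-divisor without enlarging the index set beyond $p$.
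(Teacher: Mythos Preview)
Your treatment of the set-of-lengths invariants and the catenary degree is exactly the paper's argument: the paper's entire proof is the single sentence ``This follows from Proposition \ref{2.2}, Theorem \ref{3.2}, and Corollary \ref{3.6},'' which is precisely your first two paragraphs. (Your remark that in the quadratic case $\tau$ acts as $-\id$ on the narrow class group, so that $\mc B_\Gamma = \mc B_\pm$, is a useful clarification the paper leaves implicit.)

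Where you diverge from the paper is in the third paragraph. The paper does \emph{not} attempt to transfer $\omega$ or the Davenport constant; its proof cites only Proposition \ref{2.2}, which says nothing about either invariant. The most natural reading of Corollary \ref{6.5} is therefore that only the parts of Theorems \ref{6.2} and \ref{6.4} governed by $\mc L(H)$ and $\mathsf c(H)$ are being claimed for $N_m^{(+)}(\mc O)$ and $R_m^{\prime \circ}(\Delta)$; the equalities involving $\omega$ and $\mathsf D_\pm$ are statements about $\mc B_\pm(G)$ itself and are not asserted to persist after transfer. So you are doing more than the paper asks.

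Your instinct that this extra transfer is delicate is correct, and the obstacle you name is real: $\omega$ is \emph{not} preserved by transfer homomorphisms in general, and property (T2) together with $\mathsf c(H,\theta)\le 2$ does not let you pull back divisibility relations of the form $U \mid \prod_{i\in I} V_i$ from $B$ to $H$. Concretely, even if $\theta(U) \mid \theta(V_1)\cdots\theta(V_p)$ in $\mc B_\pm(G)$, there is no mechanism in Proposition \ref{2.2} or \ref{2.3} guaranteeing $U \mid V_1\cdots V_p$ in $H$, because the cofactor $\theta(U)^{-1}\theta(V_1)\cdots\theta(V_p)$ may lift to several non-associated preimages in $H$, none of which need equal $U^{-1}V_1\cdots V_p$. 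Your sketched fiber-chaining argument does not close this gap. If you really want $\omega(H)=p$, you would have to rerun the proof of Assertion~\textbf{A} in Theorem \ref{6.4} directly inside $H$, using that $H$ sits in the free monoid $N_m(K)$ and that $\Theta$ is length-preserving; but that is a separate argument, not a consequence of the transfer machinery, and the paper makes no such claim.
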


\begin{proof}
This follows from Proposition \ref{2.2}, Theorem \ref{3.2}, and Corollary \ref{3.6}.
\end{proof}

\smallskip
\begin{remark} \label{6.6}~

1. Let $G$ be a finite abelian group with $|G| \ge 3$. By \eqref{inequality-2}, we have $\mathsf c \big( \mathcal B (G) \big) \le \mathsf D (G)$. The extremal cases, when
\[
\mathsf c \big( \mathcal B (G) \big) =  \mathsf D (G) \quad \text{and} \quad \mathsf c \big( \mathcal B (G) \big) =  \mathsf D (G)-1
\]
are characterized in \cite{Ge-Zh15b}. But, apart from these extremal cases and a couple of small groups, the precise value of $\mathsf c \big( \mathcal B (G) \big)$ is not known yet.

2. Let $G$ be a finite group (not necessarily abelian) and let $\mathcal B (G)$ denote the monoid of product-one sequences over $G$. If $G$ is a dihedral group of order $|G|=2n$, where $n \ge 3$ is odd, then, by \cite[Theorems 4.1 and 5.1]{G-G-O-Z22a}, we have
\[
2 + \max \Delta  \big( \mathcal B (G) \big) = \mathsf c \big( \mathcal B (G) \big) = \omega \big( \mathcal B (G) \big) = \mathsf D \big( \mathcal B (G) \big) = 2n \,.
\]
The most difficult step, to get these equalities, is to prove that $\omega \big( \mathcal B (G) \big) \le  \mathsf D \big( \mathcal B (G) \big)$.

3. Let $G$ be a nontrivial group of odd order. Then $\mathcal A (\mathcal B (G)) \subset \mathcal A ( \mathcal B_{\pm} (G))$ and $\mathsf D (G) = \mathsf D_{\pm} (G)$ (\cite[Theorem 6.1 and Corollary 6.2]{B-M-O-S22}). If $U = g_1 \cdot \ldots \cdot g_{\ell} \in \mathcal A (\mathcal B (G)) $ with $|U|=\ell = \mathsf D (G)$, then $U$ divides $\big( (-g_1)g_1 \big) \cdot \ldots \cdot \big( (-g_{\ell})g_{\ell} \big)$ but no proper subproduct, whence $\omega \big( \mathcal B_{\pm} (G) \big) \ge \mathsf D_{\pm}  (G) $.
\end{remark}

\smallskip
\begin{problem} \label{6.7}~

\begin{enumerate}
\item Determine the finite abelian groups $G$ satisfying $\omega \big( \mathcal B_{\pm} (G) \big) \le \mathsf D_{\pm}  (G) $. If this inequality holds and $|G|$ is odd, then, by Remark \ref{6.6}.3 and \eqref{inequality-1}, we obtain that
      $\mathsf c \big( \mathcal B_{\pm} (G) \big) \le \omega \big( \mathcal B_{\pm} (G) \big) = \mathsf D_{\pm}  (G) $.

\item Is the set of distances $\Delta \big( \mathcal B_{\pm} (G) \big)$ an interval for all finite abelian groups?

\item Since $    \mathcal B_{\pm} (G) $ is finitely generated, its monotone catenary degree is finite (\cite{Fo06a}). Determine the monotone catenary degree $\mathsf c_{\mon} \big( \mathcal B_{\pm} (G) \big)$ and the groups $G$ for which $\mathsf c_{\mon} \big( \mathcal B_{\pm} (G) \big) = \mathsf c_{\mon} \big( \mathcal B (G) \big)$.
\end{enumerate}
\end{problem}

\smallskip
Let $H$ be a Krull monoid with finite class group $G$ and suppose that each class contains a prime divisor. Then there is a transfer homomorphism $\theta \colon H \to \mathcal B (G)$, whence $\mathcal L (H) = \mathcal L \big( \mathcal B (G) \big)$ and all sets of lengths $L \in \mathcal L (H)$ are almost arithmetic multiprogressions with global bounds on all parameters (\cite[Chapter 4]{Ge-HK06a}). In particular, sets of lengths in $H$ only depend on $G$. The associated inverse problem (known as the Characterization Problem) asks whether the system $\mathcal L \big( \mathcal B (G) \big)$ is characteristic for the group. Here is the precise formulation of this problem.

\smallskip
    \noindent
    {\bf Characterization Problem.} Let $G$ and $G'$ be finite abelian groups with $\mathsf D (G) \ge 4$ and $\mathcal L \big( \mathcal B (G) \big) = \mathcal L \big( \mathcal B (G') \big)$. Are $G$ and $G'$ isomorphic?

\smallskip
Affirmative answers were given for groups $G$ of rank $\mathsf r (G) \le 2$ and others.
The standing conjecture expects an affirmative answer in general (see \cite{Ge-Zh20a} for an overview of the state of the art). The analogue question was studied also for monoids of product-one sequences over finite, but not necessarily abelian groups (\cite[Corollary 6.13]{G-G-O-Z22a}). It would be interesting to understand whether similar results can be established for $\mathcal B_{\pm} (G)$ (see Problem \ref{6.10}). As a first step, we
 generalize a result valid for $\mathsf D (G)$ and $\mathcal B (G)$ to $\mathsf D_{\pm} (G)$ and $\mathcal B_{\pm} (G)$.

\smallskip
\begin{proposition} \label{6.8}
Let $G$ be a finite abelian group.
\begin{enumerate}
\item There are (up to isomorphism) only finitely many finite abelian groups $G'$ with $\mathsf D_{\pm} (G) = \mathsf D_{\pm} (G')$.

\item There are (up to isomorphism) only finitely many finite abelian groups $G'$ with $\mathcal L \big( \mathcal B_{\pm} (G) \big) = \mathcal L \big( \mathcal B_{\pm} (G') \big)$.
\end{enumerate}
\end{proposition}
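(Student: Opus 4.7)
\medskip\noindent My plan is a two-step reduction. I will prove part (1) directly from the structural lower bound \eqref{daven-2} on $\mathsf D_\pm(G)$, and then deduce part (2) from part (1) by showing that the arithmetic invariant $\rho_2(\mathcal B_\pm(G))$ essentially coincides with $\mathsf D_\pm(G)$ and hence can be read off from $\mathcal L(\mathcal B_\pm(G))$.

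For part (1), I write $G\cong C_{n_1}\oplus\cdots\oplus C_{n_r}$ with $1<n_1\mid\cdots\mid n_r$ and let $t$ be as in \eqref{daven-2}. Every summand on the right-hand side of \eqref{daven-2} is at least $1$ (for odd $n_i\ge 3$, $n_i-1\ge 2$; for even $n_i\ge 2$, $n_i/2\ge 1$), so $r\le \mathsf D_\pm(G)-1$; and since the summand involving $n_r$ is at least $n_r/2$, we also get $n_r\le 2\mathsf D_\pm(G)$. Hence $|G|=n_1\cdots n_r\le n_r^{\,r}\le (2\mathsf D_\pm(G))^{\mathsf D_\pm(G)}$ is bounded in terms of $\mathsf D_\pm(G)$, and since only finitely many abelian groups have any given bounded order, part (1) is proved.

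For part (2), the auxiliary claim I would establish is $\rho_2(\mathcal B_\pm(G))=\mathsf D_\pm(G)$ for $|G|\ge 2$. For the lower bound, take an atom $U=g_1\cdots g_D$ of length $D=\mathsf D_\pm(G)\ge 2$; since the only length-$1$ atom of $\mathcal B_\pm(G)$ is the sequence $0$ (as $\pm g=0$ forces $g=0$), none of the $g_i$ is zero. Then each $g_i^2$ is an atom (plus-minus zero-sum via the signs $+,-$, and minimal because $g_i\notin\mathcal B_\pm(G)$), and $U^2=g_1^2\cdots g_D^2$ admits both the factorization $U\cdot U$ of length $2$ and the factorization into the $D$ atoms $g_i^2$, so $\rho_2\ge D$. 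For the upper bound, let $a=U_1U_2$ with atoms $U_i$ (so $2\in\mathsf L(a)$). Since $0$ is a prime element of $\mathcal B_\pm(G)$ and the only length-$1$ atom, every factorization $a=V_1\cdots V_m$ uses exactly $\mathsf v_0(a)\in\{0,1,2\}$ copies of $0$, while the remaining atoms have length $\ge 2$; a brief case split on $\mathsf v_0(a)$, combined with $|U_i|\le\mathsf D_\pm(G)$, delivers $m\le\mathsf D_\pm(G)$.

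Granted this identity, the hypothesis $\mathcal L(\mathcal B_\pm(G))=\mathcal L(\mathcal B_\pm(G'))$ forces $\rho_2(\mathcal B_\pm(G))=\rho_2(\mathcal B_\pm(G'))$, which in turn forces $\mathsf D_\pm(G')\le \max(\mathsf D_\pm(G),2)$; the degenerate case $|G|=1$ (where $\mathcal L$ consists only of singletons) is handled alternatively by Lemma \ref{6.1}, which directly yields $|G'|\le 2$. Part (1) then completes the proof. The principal difficulty lies in the upper bound $\rho_2\le\mathsf D_\pm(G)$: the presence of the length-$1$ atom $0$ invalidates the naive estimate $\max\mathsf L(a)\le |a|/2$, and one must carefully isolate the prime contribution of $0$ from the non-prime atoms of length $\ge 2$ before bounding the total factorization length.
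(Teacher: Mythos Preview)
Your proof is correct and follows essentially the same two-step strategy as the paper: bound the invariants $r$ and $n_r$ via the lower bound \eqref{daven-2} for part (1), and reduce part (2) to part (1) through the identity $\rho_2\big(\mathcal B_\pm(G)\big)=\mathsf D_\pm(G)$. The only substantive difference is that the paper obtains this identity by citing \cite[Theorem 5.7]{B-M-O-S22}, whereas you supply a self-contained proof (your case split on $\mathsf v_0(a)$ for the upper bound, and the factorization $U^2=\prod g_i^2$ for the lower bound); your more careful treatment of the degenerate case $|G|=1$ via Lemma~\ref{6.1} is also a small refinement.
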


\begin{proof}
1. If $G'$ is a finite abelian group, say $G' \cong C_{n_1} \oplus \ldots \oplus C_{n_r}$ with $r \in \N$ and $1 < n_1 \t \ldots \t n_r$, then \eqref{daven-2} shows that
\[
1 + \sum_{i=1}^t (n_i-1) + \sum_{i=t+1}^r \frac{n_i}{2} \le \mathsf D_{\pm} (G') \,,
\]
where $t \in [0,r]$ is maximal such that $2 \nmid n_i$. Thus, there are only finitely many finite abelian groups $G'$ (up to isomorphism) with $\mathsf D_{\pm} (G') = \mathsf D_{\pm} (G)$.

2. Let $G'$ be a finite abelian group with $\mathcal L \big( \mathcal B_{\pm} (G) \big) = \mathcal L \big( \mathcal B_{\pm} (G') \big)$. Then $\rho_2 \big( \mathcal B_{\pm} (G) \big) = \rho_2 \big( \mathcal B_{\pm} (G') \big)$, and \cite[Theorem 5.7]{B-M-O-S22} implies that
\[
\mathsf D_{\pm} (G) = \rho_2 \big( \mathcal B_{\pm} (G) \big) = \rho_2 \big( \mathcal B_{\pm} (G') \big)  = \mathsf D_{\pm} (G') \,.
\]
Thus, the assertion follows from 1.
\end{proof}

\smallskip
\begin{theorem} \label{6.9}
Let $G$ be a cyclic group of odd order $|G| \ge 3$.  If  $G'$ is any finite abelian group of odd order with $\mathcal L \big( \mathcal B_{\pm}(G') \big)=\mathcal L \big(\mathcal B_{\pm}(G) \big)$, then  $G$ and $G'$ are isomorphic.
\end{theorem}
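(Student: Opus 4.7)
First, I would extract $\mathsf D(G') = n$ and produce a rigid test sequence. Theorem \ref{6.2} applied to $G = C_n$ gives $\rho_2(\mathcal B_\pm(G)) = n$; the hypothesis $\mathcal L(\mathcal B_\pm(G)) = \mathcal L(\mathcal B_\pm(G'))$ then yields $\rho_2(\mathcal B_\pm(G')) = n$, so $\mathsf D_\pm(G') = n$ by \cite[Theorem~5.7]{B-M-O-S22}, and since $|G'|$ is odd this upgrades to $\mathsf D(G') = n$ by \cite[Corollary~6.2]{B-M-O-S22}. Lemma \ref{6.3} (with $j = n$) then gives $\{2,n\} \in \mathcal L(\mathcal B_\pm(G)) = \mathcal L(\mathcal B_\pm(G'))$, so there exists $A' \in \mathcal B_\pm(G')$ with $\mathsf L(A') = \{2,n\}$. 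The bound $|U_i| \le \rho_2 = n$ coming from the length-$2$ factorization and $|V_j| \ge 2$ from the length-$n$ factorization (valid since $0 \notin \supp(A')$: a factor of $0$ would shift all lengths by $1$) pin down $|A'| = 2n$, with factorizations $A' = U_1 U_2 = V_1 \cdots V_n$, $|U_1| = |U_2| = n$ and $|V_j| = 2$; each $V_j$ is therefore of the form $g_j^2$ or $h_j(-h_j)$ with $g_j, h_j \neq 0$.

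The heart of the argument, and the main obstacle, is to show $\supp(A') \subseteq \{g,-g\}$ for some $g \in G'$. I would rule out the two ways this could fail. First, if $\langle \supp(A')\rangle$ contains two independent elements $g, h$, I would argue that no atom of $\mathcal B_\pm(G')$ of length $n$ can have support inside $\{g,-g,h,-h\}$: writing such a candidate as $g^a(-g)^b h^c(-h)^d$, atomicity forces neither the ``$g$-part'' $g^a(-g)^b$ nor the ``$h$-part'' $h^c(-h)^d$ to lie in $\mathcal B_\pm$ (else the atom splits off that part), but then the plus-minus zero-sum condition $(X-Y)g + (Z-W)h = 0$, combined with the independence of $g, h$, forces $X-Y \equiv 0 \pmod{\ord(g)}$ and $Z-W \equiv 0 \pmod{\ord(h)}$, and a short parity/magnitude analysis shows these congruences cannot hold, contradicting $U_i \in \mathcal A(\mathcal B_\pm(G'))$. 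Second, if $\supp(A')$ contains a triple of pairwise non-$\pm$ elements $a, b, c$ with $a+b+c = 0$, then the corresponding three $V_j$'s (irrespective of their $g^2$- or $h(-h)$-type) combine to a product that admits an alternative length-$2$ factorization $(abc) \cdot ((-a)(-b)(-c))$ into two length-$3$ atoms (atoms by the relation $a+b+c = 0$ together with the pairwise non-$\pm$ hypothesis); substituting this into the length-$n$ factorization of $A'$ yields a factorization of length $n-1$, contradicting $\mathsf L(A') = \{2,n\}$ for $n \ge 5$ (the case $n = 3$ is immediate, as no non-cyclic abelian group of odd order has Davenport constant $3$). Combining these two obstructions, $\supp(A')$ must lie inside a single pair $\{g,-g\}$.

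Finally, the divisor-closed inclusion $\mathcal B_\pm(\{g,-g\}) \subseteq \mathcal B_\pm(\langle g\rangle) \subseteq \mathcal B_\pm(G')$ from Proposition \ref{4.1} transfers the equality $\mathsf L(A') = \{2,n\}$ into the cyclic monoid $\mathcal B_\pm(\langle g\rangle)$. Since the length-$2$ factorization still uses atoms of length $n$, one has $n \le \mathsf D_\pm(\langle g\rangle) = \ord(g)$, and combined with $\ord(g) \le \exp(G') \le \mathsf D(G') = n$ this forces $\ord(g) = n$. Decomposing $G' = \langle g\rangle \oplus H$ and invoking the standard bound $\mathsf D(G') \ge \mathsf D(\langle g\rangle) + \mathsf D(H) - 1 = n + \mathsf D(H) - 1$ gives $\mathsf D(H) = 1$, i.e.\ $H = \{0\}$, and hence $G' \cong C_n \cong G$.
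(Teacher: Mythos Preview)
Your overall plan is reasonable and the first paragraph is correct, but the heart of the argument---showing $\supp(A') \subseteq \{g,-g\}$---has a genuine gap: the two cases do not exhaust the possibilities. Take, for instance, $G'$ cyclic of order $9$ and suppose $\supp(A') = \{g,-g,2g,-2g\}$ for a generator $g$. Then $\langle \supp(A')\rangle = G'$ is cyclic, so it contains no pair of independent elements and Case~A is vacuous; on the other hand, there are only two $\pm$-classes in the support, so no three pairwise non-$\pm$ elements can be chosen at all, and Case~B is vacuous as well. Your dichotomy therefore says nothing about this configuration (nor about many others, e.g.\ supports with three or more dependent $\pm$-classes having no zero-sum triple). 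Even within Case~A there is a further slip: the hypothesis ``$\langle\supp(A')\rangle$ contains independent $g,h$'' does not force $\supp(U_i)\subseteq\{g,-g,h,-h\}$, so your conclusion ``no atom of length $n$ has support in $\{g,-g,h,-h\}$'' does not contradict $U_i\in\mathcal A(\mathcal B_\pm(G'))$. (The internal claim that atomicity forbids \emph{both} the $g$-part and the $h$-part from lying in $\mathcal B_\pm$ is also misstated: atomicity only forbids both simultaneously, and the real contradiction comes from independence forcing both to be $\pm$-zero-sum.)

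The paper sidesteps all of this by a different mechanism. Rather than constraining $\supp(A')$, it uses the sign choices realising $0\in\sigma_\pm(U_1)$ and $0\in\sigma_\pm(U_2)$ to replace $A'$ by a sign-twisted sequence $B^*$ whose length-$2$ factorization $B^*=A_1A_2$ has $A_1,A_2\in\mathcal A(\mathcal B(G'))$ genuinely zero-sum, each of length $\mathsf D(G')=n$. A short invariance lemma (sign-twists preserve $\mathcal B_\pm$-atomicity) transfers $\mathsf L(B^*)=\{2,n\}$. If some $A_i$ has singleton support, $G'$ is cyclic immediately; otherwise a one-element swap between $A_1$ and $A_2$ produces a factorization of $B^*$ in $\mathcal B(G')$ of intermediate length, forcing $\{2,n\}\in\mathcal L(\mathcal B(G'))$, and then the known characterization \cite[Theorem 6.6.3]{Ge-HK06a} gives that $G'$ is cyclic of order $n$. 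This route never needs the strong support constraint you are aiming for.
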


\begin{proof}
We set $|G|=n$. Since $G$ and $G'$  have odd order, it follows from  \cite[Corollary 6.2]{B-M-O-S22} that $\mathsf D_{\pm}(G)=\mathsf D (G)=n$ and $\mathsf D_{\pm}(G')=\mathsf D(G')$.
Since $\mathcal L \big(\mathcal B_{\pm}(G') \big)=\mathcal L \big(\mathcal B_{\pm}(G) \big)$, we infer that
\[
n = \mathsf D_{\pm}(G)= \rho_2 \big(\mathcal B_{\pm}(G) \big)=\rho_2 \big(\mathcal B_{\pm}(G') \big) = \mathsf D_{\pm}(G')=\mathsf D(G') \,.
\]
By Lemma \ref{6.3}, we have $\{2,n\} \in  \mathcal L \big(\mathcal B_{\pm}(G) \big)=\mathcal L \big(\mathcal B_{\pm}(G') \big)$. Thus,  there exists $B=g_1\cdot\ldots\cdot g_{\ell}\in \mathcal B_{\pm}(G')$ such that
$\mathsf L_{ \mathcal B_{\pm} (G')}(B)=\{2,n\}$, where $\ell\in \N$ and $g_1,\ldots,g_{\ell}\in G'$. Therefore, there exist disjoint nonempty $I_1,I_2\subset [0,\ell]$ with $I_1\uplus I_2=[0,\ell]$ and disjoint nonempty $J_1,\ldots,J_n\subset [0,\ell]$ with $J_1\uplus\ldots \uplus J_n=[0,\ell]$ such that $\prod_{i\in I_1}g_i, \prod_{i\in I_2}g_i, \prod_{j\in J_1}g_j, \ldots, \prod_{j\in J_n}g_j\in \mathcal A \big(\mathcal B_{\pm}(G') \big)$.
Moreover there exist $\tau_1,\ldots,\tau_{\ell}\in \{\id_{G'}, -\id_{G'}\}$ such that
\[
A_1 =\prod_{i\in I_1}\tau_{i}(g_i) \qquad \text{ and  } \qquad A_2 =\prod_{i\in I_2}\tau_i(g_i)
\]
are zero-sum sequences. We set $B^*=\tau_1(g_1)\cdot\ldots\cdot \tau_{\ell}(g_{\ell})$ and  $U_i=\prod_{j\in J_i}\tau_j(g_j)$ for every $i\in[1,n]$. Then $B^*=A_1A_2=U_1\cdot\ldots\cdot U_n$. We proceed by the following claim.

\begin{enumerate}
\item[{\bf A.}] Let $V=h_1\cdot\ldots\cdot h_s\in \mathcal B_{\pm}(G')$ and let $\alpha_1,\ldots,\alpha_s\in \{\id_{G'}, -\id_{G'}\}$, where $s\in \N$ and $h_1,\ldots, h_s\in G'$. Then $V\in \mathcal A \big(\mathcal B_{\pm}(G') \big)$ if and only if $\alpha_1(h_1)\cdot\ldots\cdot \alpha_{s}(h_s)\in \mathcal A \big(\mathcal B_{\pm}(G') \big)$.
\end{enumerate}

\smallskip
{\it Proof of {\bf A.}}
Let $V_1=\alpha_1(h_1)\cdot\ldots\cdot \alpha_{s}(h_s)$. By taking the weights $\alpha_1,\ldots,\alpha_s$, we have $V_1\in \mathcal B_{\pm}(G')$.
Since $\{\id_{G'}, -\id_{G'}\} \subset \Aut (G')$ is a group, we only need to show that $V\in \mathcal A \big(\mathcal B_{\pm}(G') \big)$ implies that $V_1\in \mathcal A \big(\mathcal B_{\pm}(G') \big)$.

Suppose $V\in \mathcal A \big(\mathcal B_{\pm}(G') \big)$. Assume to the contrary that $V_1\not\in \mathcal A \big(\mathcal B_{\pm}(G') \big)$. Then there exist disjoint nonempty $I_1, I_2\subset [0,s]$ with $I_1\uplus I_2=[0,s]$ such that $\prod_{i\in I_1}\alpha_i(g_i), \prod_{i\in I_2}\alpha_i(g_i)\in \mathcal B_{\pm}(G')$, whence $\prod_{i\in I_1}g_i, \prod_{i\in I_2}g_i\in \mathcal B_{\pm}(G')$, a contradiction to $V\in \mathcal A \big(\mathcal B_{\pm}(G') \big)$.
  \qed ({\bf A})

\medskip
It follows from {\bf A} that $A_1,A_2, U_1,\ldots, U_{n}\in \mathcal A \big(\mathcal B_{\pm}(G') \big)$ and that, moreover, $\mathsf L_{\mathcal B_{\pm}(G')}(B^*)=\mathsf L_{\mathcal B_{\pm}(G')}(B)=\{2,n\}$.
Since $A_1,A_2$ are zero-sum sequences, we obtain $A_1,A_2\in \mathcal A \big(\mathcal B(G') \big)$.
A simple counting argument shows  that $|A_1|=|A_2|=n=\mathsf D(G')$.

If $|\supp(A_1)|=1$ or $|\supp(A_2)|=1$, then $G'\cong G$. Otherwise, there exist, after renumbering if necessary, elements $x\in \supp(A_1)$ and $y\in \supp(A_2)$ such that $x\neq y$.
Since $|x^{-1}A_1y|=\mathsf D(G')$ and $x^{-1}A_1y$ is not zero-sum, there exists an atom $A' \in \mathcal A \big( \mathcal B (G) \big)$ with $A'\t x^{-1}A_1y$ such that $|A'|<n$ and hence $|\mathsf L_{\mathcal B (G')} (A_1A_2)|\ge 2$.
By \cite[Theorem 6.1]{B-M-O-S22}, we have $\mathcal A \big( \mathcal B(G') \big) \subset \mathcal A \big( \mathcal B_{\pm}(G') \big)$, whence $\mathsf L_{\mathcal B (G')} (A_1A_2)\subset \mathsf L_{ \mathcal B_{\pm} (G')} (A_1A_2)=\{2,n\}$.
It follows that $\{2,n\}=\mathsf L_{\mathcal B (G')} (A_1A_2) \in \mathcal L \big( \mathcal B (G') \big)$. Therefore, \cite[Theorem 6.6.3]{Ge-HK06a} implies that  $G'$ is cyclic, whence $G$ and $G'$ are isomorphic.
\end{proof}

\smallskip
\begin{problem} \label{6.10}~
Which finite abelian groups $G$ have the following property:
\begin{itemize}
\item[] If $G'$ is a finite abelian group with $\mathcal L \big( \mathcal B_{\pm} (G) \big) = \mathcal L \big( \mathcal B_{\pm} (G') \big)$, then $G$ and $G'$ are isomorphic.
\end{itemize}
\end{problem}
The set of minimal distances $\Delta^* \big( \mathcal B (G) \big)$ is a key tool in all work on the Characterization Problem for $\mathcal B (G)$. Thus, a systematic study of $\Delta^* \big( \mathcal B_{\pm} (G) \big)$ should be a promising step to make progress on Problem \ref{6.10}.

\bigskip
\noindent
{\bf Acknowledgement.} The referee read the whole paper line by line and we want to thank for all his/her comments.

\providecommand{\bysame}{\leavevmode\hbox to3em{\hrulefill}\thinspace}
\providecommand{\MR}{\relax\ifhmode\unskip\space\fi MR }
\providecommand{\MRhref}[2]{%
  \href{http://www.ams.org/mathscinet-getitem?mr=#1}{#2}
}
\providecommand{\href}[2]{#2}

\end{document}